\newtheorem{theorem}{Theorem}[section]
\newtheorem{lemma}{Lemma}[section]
\newtheorem{proposition}{Proposition}[section]
\theoremstyle{definition}
\newtheorem*{rmk*}{Remark}
\newtheorem{rmk}{Remark}[section]
\DeclareMathOperator{\Var}{Var}
\DeclareMathOperator{\Cov}{Cov}
\DeclareMathOperator*{\argmax}{arg\,max}
\DeclareMathOperator{\E}{E}
\DeclareMathOperator{\pr}{P}
\DeclareMathOperator{\leb}{Leb}
\numberwithin{equation}{section}
    \renewcommand*{\section}{\@startsection{section}{1}{\z@}%
    {6pt}{3pt}{\reset@font\normalsize\bfseries}}
    \renewcommand*{\subsection}{\@startsection{subsection}{2}{\z@}%
    {3pt}{3pt}{\reset@font\normalsize\mdseries\itshape}}
    \renewcommand*{\subsubsection}{\@startsection{subsubsection}{3}{\z@}%
    {3pt}{3pt}{\reset@font\normalsize\mdseries\itshape}}
\def\@seccntformat#1{\csname the#1\endcsname.\quad}
\def\@listi{\leftmargin\leftmargini
  \topsep=.5\baselineskip 
  \partopsep=0pt \parsep=0pt \itemsep=0pt}
\let\@listI\@listi
\def\@listii{\leftmargin\leftmarginii
  \labelwidth\leftmarginii \advance\labelwidth-\labelsep
  \topsep=0pt \partopsep=0pt \parsep=0pt \itemsep=0pt}
\def\@listiii{\leftmargin\leftmarginiii
  \labelwidth\leftmarginiii \advance\labelwidth-\labelsep
  \topsep=0pt \partopsep=0pt \parsep=0pt \itemsep=0pt}
\def\@listiv{\leftmargin\leftmarginiv
  \labelwidth\leftmarginiv \advance\labelwidth-\labelsep
  \topsep=0pt \partopsep=0pt \parsep=0pt \itemsep=0pt}
\newcommand{\opnorm}{\@ifstar\@opnorms\@opnorm}
\newcommand{\@opnorms}[1]{%
  \left|\mkern-1.5mu\left|\mkern-1.5mu\left|
   #1
  \right|\mkern-1.5mu\right|\mkern-1.5mu\right|
}
\newcommand{\@opnorm}[2][]{%
  \mathopen{#1|\mkern-1.5mu#1|\mkern-1.5mu#1|}
  #2
  \mathclose{#1|\mkern-1.5mu#1|\mkern-1.5mu#1|}
}
\def\be#1{\begin{equation*}#1\end{equation*}}
\def\ben#1{\begin{equation}#1\end{equation}}
\def\bes#1{\begin{equation*}\begin{split}#1\end{split}\end{equation*}}
\def\besn#1{\begin{equation}\begin{split}#1\end{split}\end{equation}}
\def\bm#1{\begin{multline*}#1\end{multline*}}
\def\ba#1{\begin{align*}#1\end{align*}}
\def\ban#1{\begin{align}#1\end{align}}
\newcommand{\eps}{\varepsilon}
\newcommand{\ds}{\mathcal{X}^\mathrm{raw}}
\newcommand{\dsr}{\mathcal{X}^\mathrm{rel}}
\newcommand{\dsn}{\mathcal{X}^0}
\newcommand{\tri}{\mathrm{tri}}
\newcommand{\ul}[1]{\underline{#1}}
\newcommand{\wt}[1]{\widetilde{#1}}
\newcommand{\mf}[1]{\mathfrak{#1}}
\newcommand{\mcl}[1]{\mathcal{#1}}
\newcommand{\norm}[1]{\left\|#1\right\|}
\newcommand{\bra}[1]{\left(#1\right)}
\newcommand{\cbra}[1]{\left\{#1\right\}}
\newcommand{\sbra}[1]{\left[#1\right]}
\newcommand{\abs}[1]{\left|#1\right|}
\newcommand{\pushright}[1]{\ifmeasuring@#1\else\omit\hfill$\displaystyle#1$\fi\ignorespaces}
\newcommand{\pushleft}[1]{\ifmeasuring@#1\else\omit$\displaystyle#1$\hfill\fi\ignorespaces}
\title{
On lead-lag estimation of non-synchronously observed point processes
} 
\author{
Takaaki Shiotani\thanks{Graduate School of Mathematical Sciences, The University of Tokyo, 3-8-1 Komaba, Meguro-ku, Tokyo 153-8914 Japan}
\and
Takaki Hayashi\thanks{Graduate School of Business Administration, Keio University, 4-1-1 Hiyoshi, Yokohama 223-8526, Japan}
\and
Yuta Koike\footnotemark[1]
}
\begin{document}

\maketitle

\begin{abstract}

This paper introduces a new theoretical framework for analyzing lead-lag relationships between point processes, with a special focus on applications to high-frequency financial data. 
In particular, we are interested in lead-lag relationships between two sequences of order arrival timestamps. 
The seminal work of Dobrev and Schaumburg proposed model-free measures of cross-market trading activity based on cross-counts of timestamps. 
While their method is known to yield reliable results, it faces limitations because its original formulation inherently relies on discrete-time observations, an issue we address in this study. 
Specifically, we formulate the problem of estimating lead-lag relationships in two point processes as that of estimating the shape of the cross-pair correlation function (CPCF) of a bivariate stationary point process, a quantity well-studied in the neuroscience and spatial statistics literature. 
Within this framework, the prevailing lead-lag time is defined as the location of the CPCF's sharpest peak. 
Under this interpretation, the peak location in Dobrev and Schaumburg's cross-market activity measure can be viewed as an estimator of the lead-lag time in the aforementioned sense. 
We further propose an alternative lead-lag time estimator based on kernel density estimation and show that it possesses desirable theoretical properties and delivers superior numerical performance. 
Empirical evidence from high-frequency financial data demonstrates the effectiveness of our proposed method. 
\smallskip

\noindent \textit{Keywords}: Bandwidth selection; cross-correlation histogram; cross-pair correlation function; high-frequency data; non-synchronicity; lead-lag effect.

\end{abstract}

\section{Introduction}

Empirical research on lead-lag relationships between two financial time series has long been an active area of study in finance. 
%
%
Their identification is fundamental to understanding price discovery and may provide practitioners with opportunities for excess profits.
In modern financial markets, such relationships can persist only over very short horizons, even on the order of one millisecond or less. 
Therefore, lower-frequency or coarsely aggregated data inevitably fail to find the fine structure of these relationships.
This motivates the use of \emph{tick data}, i.e., raw high-frequency data that records all transactions as they arrive randomly and \emph{non-synchronously}. In particular, handling non-synchronicity is a central issue when estimating lead--lag relationships from such data.

Most existing studies have examined high-frequency lead-lag dynamics using price series. 
Prominent approaches include methods based on estimating the cross-covariance function \cite{hoffmann2013estimation,de1997high,huth2014high}, wavelet analysis \cite{hayashi2017multi,hayashi2018wavelet}, local spectral estimation \cite{koike2021inference}, Hawkes process-based multi-asset models \cite{bacry2013some,da2017correlation} and the multi-asset lagged adjustment model of \cite{buccheri2021high}. 
Among these, \citet{hoffmann2013estimation} introduced a simple cross-covariance estimator that can be computed directly from non-synchronously observed returns and proposed estimating the prevailing lead-lag time by locating its maximizer. 
Although their method yields sensible empirical implications due to its intuitive interpretation \cite{huth2014high,bollen2017tail,bangsgaard2024lead,dao2018ultra,alsayed2014ultra,poutre2024profitability}, the resulting lead-lag time estimates are often unstable and unreliable \cite{huth2014high,hayashi2017jpn,bangsgaard2024lead}, presumably because high-frequency price series are affected by market microstructure noise. 
As an alternative method, \citet{dobrev2017high} proposed model-free measurements of the lead-lag relationship between two assets based on cross-counts of their order arrivals. 
Their estimator of lead–lag time has been shown to produce highly stable and reliable estimates in practice; see \cite{dobrev2017high,hayashi2017jpn}. 

However, the Dobrev--Schaumburg method is essentially descriptive, and it is not immediately clear what underlying quantity the method actually estimates.\footnote{Although \cite{dobrev2023high} discuss some asymptotic properties of their measurements when the two timestamp series are independent, they do not clearly specify the underlying estimands.}
Indeed, as we show in \cref{sec:ds}, there exist situations in which their method performs poorly in practice, particularly when the data contain relatively few observations. 
Moreover, implementing their method requires partitioning the observation period into equi-spaced buckets, and the choice of bucket size has a substantial impact on the results. 
Yet, because the method is ``model-free,'' it does not offer a statistical explanation for why such sensitivity arises. 

To address these issues, we reformulate the Dobrev--Schaumburg method from a point process perspective. 
This viewpoint reveals that their measurements essentially estimate shape characteristics of the \emph{cross-pair correlation function} (CPCF) of a bivariate point process generated by order arrivals; see \cref{sec:pp} for definitions. 
Accordingly, the Dobrev--Schaumburg estimator of lead-lag time can be interpreted as an estimator of the CPCF's sharpest peak location. 
This interpretation also clarifies that the instability observed in their method arises when the bucket size is chosen too small relative to a range that is permissible given the properties of the underlying data. 
At the same time, because their estimator can only take values that are integer multiples of the bucket size, using a larger bucket size results in excessively coarse estimates. 
To overcome these limitations, we propose a nonparametric, kernel-based estimator of the lead–lag time, together with a data-driven bandwidth selection procedure. We show both theoretically and empirically that this new estimator produces stable and accurate results even in settings where the Dobrev–Schaumburg method fails.

The remainder of the paper is organized as follows. 
\cref{sec:ds} provides a detailed explanation of the Dobrev--Schaumburg method. 
\cref{sec:pp} introduces a point process framework that clarifies the theoretical meaning of the Dobrev--Schaumburg method. 
\cref{sec:kde} proposes an alternative estimator of the lead-lag time within this framework and develops its theoretical properties. 
\cref{sec:simulation} demonstrates its superior numerical performance through a comprehensive Monte Carlo study. 
\cref{sec:empirical} presents an empirical application that illustrates the effectiveness of our proposed estimator using real data. 
\cref{sec:conclusion} concludes by summarizing our main contributions and discussing directions for future research. 
The appendix contains mathematical proofs and additional implementation details. 

\paragraph{Notation} 
The cardinality of a finite set $S$ is denoted by $|S|$. 
$\leb$ denotes the Lebesgue measure. 
The Borel $\sigma$-algebra of a topological space $S$ is denoted by $\mcl B(S)$. 
For a real-valued function $f$ defined on a set $S$, we set $\|f\|_\infty=\sup_{x\in S}|f(x)|$. 
Also, we denote by $\argmax_{x\in S}f(x)$ the set of maximizers of $f$ on $S$. 
For a random variable $X$ and $p\geq1$, we set $\|X\|_p:=(\E[|X|^p])^{1/p}$. 
The underlying probability space is denoted by $(\Omega,\mcl F,\pr)$.
We interpret $1/0=\infty$ by convention. 

\section{The Dobrev--Schaumburg method}\label{sec:ds}

Suppose that we have tick data for two financial assets, with timestamps given by $0\leq t^1_1<\cdots<t^1_{n_1}\leq T$ and $0\leq t^2_1<\cdots<t^2_{n_2}\leq T$. 
Throughout the paper, we assume that $T\geq1$ is an integer and timestamps are expressed in seconds when working with real data.; hence we may regard $T$ as a large integer. 
\citet{dobrev2017high} proposed measuring the lead-lag relationship between two assets using the following procedure. 

First, divide the observation interval $[0,T]$ into equi-spaced time buckets $I^h_k:=(kh,(k+1)h]$, $k=0,1,\dots,T/h-1$, where $h>0$ is chosen so that $h^{-1}\in\mathbb N$. 
We interpret a situation where an event for asset 2 occurs with a lag $\ell\in\mathbb Z$ after an event for asset 1 as the existence of timestamps $t^1_i,t^2_j$ and a bucket index $k\in\{|\ell|, |\ell|+1,\dots,T/h-1-|\ell|\}$ such that $t^1_i\in I^h_k$ and $t^2_j\in I^h_{k+\ell}$.
By calculating the number of such bucket indices, we obtain a measure of the lead-lag effect of asset 1 on asset 2 with lag $\ell$. 
Based on this idea, \citet{dobrev2017high} introduced the \emph{raw cross-market activity} at offset $\ell$ as
\[
\ds_h(\ell):=\sum_{k=|\ell|}^{T/h-1-|\ell|}1_{\{\exists t^1_i\in I^h_k,\,\exists t^2_j\in I^h_{k+\ell}\}}.
\] 
To adjust the degree of freedom, they also defined the \emph{relative cross-market activity} as
\[
\dsr_h(\ell):=\frac{\ds_h(\ell)}{\min\cbra{\sum_{k=|\ell|}^{T/h-1-|\ell|}1_{\{\exists t^1_i\in I^h_k\}},\ \sum_{k=|\ell|}^{ T/h-1-|\ell|}1_{\{\exists t^2_{i+\ell}\in I^h_k\}}}}.
\]
Computing $\ds_h(\ell)$ and $\dsr_h(\ell)$ requires choosing a value of $h$. 
In \cite{dobrev2017high}, $h$ is set to 1 millisecond, which is chosen in an ad-hoc manner by considering the time granularity of the data. 
Here, 1 millisecond is the minimum time unit available in their dataset for the S\&P 500 cash market. 

By definition, the larger value of $\dsr_h(\ell)$ indicates a stronger lead-lag effect in which asset 2 follows asset 1 after a delay of $\ell h$. 
Motivated by this, \citet{dobrev2017high} proposed identifying the prevailing lead-lag time by locating the peak of the map $\ell\mapsto\dsr_h(\ell)$. 
Specifically, given a search grid 
$\mcl G_h$, the lead-lag time is estimated by 
\[
\hat\theta_h^{DS}=\hat \ell h,\quad\text{where }\hat \ell\in\argmax_{\ell\in\mcl G_h}\dsr_h(\ell).
\]
We refer to $\hat\theta_h^{DS}$ as the \emph{DS estimator}. 
In what follows, we assume the true lead-lag time lies within the interval $(-r,r)$ for some known positive constant $r$, and define $\mcl G_h:=\{\ell\in\mathbb Z:|\ell h|\leq r\}$. 

As mentioned in the introduction, several empirical studies have reported that the DS estimator yields stable and interpretable estimates of lead-lag time. 
However, there are cases where the estimator performs poorly, particularly when the dataset contains relatively few observations. 
\cref{fig:ds-vs-cpcf} illustrates this issue, showing the relative cross-market activity measure computed from best-quote updates on two U.S.~stock exchanges, NASDAQ and BATS, for the MNST stock on August 12, 2015. 
In this example, the values of the cross-market activity measure fluctuate heavily, making it difficult to identify the peak reliability. 
Yet, due to the ``model-free'' nature of the Dobrev--Schaumburg method, no statistical interpretation is provided for the origin of such instability. 
One goal of this study is to establish a theoretical foundation for their approach and fill this gap. 

\begin{figure}[ht]
\centering
\includegraphics[scale=0.8]{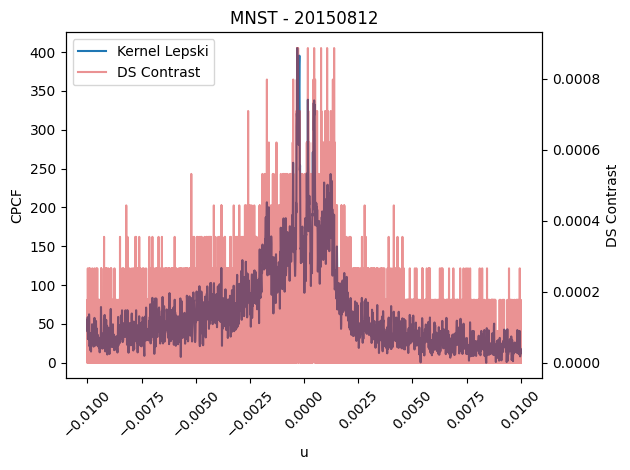}
%
\caption{Contrast functions: DS vs ours. MNST, NASDAQ vs BATS (quote, Aug. 12, 2015). The cross-market activity measure $\dsr_h(\ell)$ is indicated by the red line, while the kernel density estimator $g_{\hat h}(u)$ with the Lepski-selected bandwith (see \cref{sec:kde}) is indicated by the blue line. The unit of the horizontal axis is seconds.}
\label{fig:ds-vs-cpcf}
\end{figure}

\section{Proposed framework}\label{sec:pp}

To clarify the statistical meaning of the Dobrev--Schaumburg method, we model the observed timestamps $(t^1_i)_{i=1}^{n_1}$ and $(t^2_j)_{j=1}^{n_2}$ as realizations of a bivariate point process on the real line. 
Specifically, for each $a=1,2$, we consider the timestamps $(t^a_i)$ to be a result of observing a point process $N_a$ on $\mathbb R$ over the interval $[0,T]$. 
That is, $N_a(A)=|\{i:t^a_i\in A\}|$ for a Borel set $A\subset[0,T]$. 
Here and below, we mainly follow the mathematical formulation of point processes described in \cite{daley2006introduction,daley2007introduction} and refer to these monographs for unexplained concepts and notation (see also \cite[Section 2.2]{shiotani2024statistical} for a summary). 
With this formulation, we can rewrite $\dsr_h(\ell)$ as
\[
\dsr_h(\ell)=\frac{\sum_{k=|\ell|}^{T/h-1-|\ell|}1_{\{N_1(I_{k}^h)>0,\ N_2(I_{k+\ell}^h)>0\}}}{\min\cbra{\sum_{k=|\ell|}^{T/h-1-|\ell|}1_{\{N_1(I_{k}^h)>0\}},\ \sum_{k=|\ell|}^{T/h-1-|\ell|}1_{\{N_2(I_{k+\ell}^h)>0\}}}}.
\]
Using this expression, we can relate $\dsr_h(\ell)$ to the \emph{cross-pair correlation function (CPCF)} of $N$. 

To state the theoretical result formally, we introduce several assumptions and notation. 
We assume that $N=(N_1,N_2)$ is a simple stationary bivariate point process on $\mathbb R$ with intensities $\lambda_1,\lambda_2\in(0,\infty)$. 
Note that we use the term ``intensity'' in the same sense as in \cite{daley2006introduction} (see page 47 ibidem). 
By \cite[Proposition 3.3.IV]{daley2006introduction}, we have $\lambda_a=\E[N_a((0,1])]$ for $a=1,2$. 
We also assume that there exists a locally integrable function $\lambda_{12}:\mathbb R\to[0,\infty]$ such that
\ben{\label{def:cross-intensity}
\E[N_1(A_1)N_2(A_2)]
=\int_{A_1\times A_2}\lambda_{12}(y-x)dxdy
}
for any bounded $A_1,A_2\in\mcl B(\mathbb R)$. 
We refer to $\lambda_{12}$ as the \emph{cross-intensity function}\footnote{In neuroscience, the term ``cross-intensity function'' usually refers to the functions $\lambda_{12}(u)/\lambda_1$ or $\lambda_{12}(u)/\lambda_2$ (see e.g.~\cite{bryant1973correlations}). 
We follow the terminology used in spatial statistics \cite{hessellund2022semiparametric,hessellund2022second}. 
In the terminology of point process theory, $\lambda_{12}$ is a density of the reduced cross-moment measure of $N$ (cf.~\cite[Section 8.3]{daley2006introduction}).} of $N$.  
The CPCF of $N$ is the function $g:\mathbb R\to[0,\infty]$ defined as
\[
g(u)=\frac{\lambda_{12}(u)}{\lambda_1\lambda_2}\qquad(u\in\mathbb R).
\]
The cross-intensity function can be related to the cross-covariance function between the infinitesimal increments of $N_1$ and $N_2$ in the following sense (cf.~\eqref{ds-kernel}):
\[
\nu_{12}(u):=\lim_{h\downarrow0}\frac{\Cov(N_1(0,h],N_2(u,u+h])}{h^2}=\lambda_{12}(u)-\lambda_1\lambda_2\quad\text{a.e. }u.
\]
The function $\nu_{12}$ is called the \emph{covariance density} of $N$. 
In this sense, $g(u)$ measures the cross-covariation between $N_1(\cdot)$ and $N_2(\cdot+u)$. 
%

We also need the notion of $\alpha$-mixing (or strong mixing) for point processes. 
Recall that the $\alpha$-mixing coefficient of two sub-$\sigma$-algebras $\mcl G$ and $\mcl H$ of $\mcl F$ is defined as 
\[
\alpha(\mcl G, \mcl H) := \sup \{|\pr(C \cap D) - \pr(C)\pr(D)|: C \in \mcl G, D \in \mcl H \}.
\]
For $E\in\mcl B(\mathbb R)$, we denote by $N\cap E=(N_i\cap E)_{i=1}^2$ the restriction of $N$ to $E$, i.e.~$(N_i\cap E)(A)=N_i(A\cap E)$ for $i=1,2$ and $A\in\mcl B(\mathbb R)$. 
Also, $E\oplus r:=\{x\in\mathbb R:|y-x|<r\text{ for some }y\in E\}$ denotes the $r$-enlargement of $E$. 
Moreover, given a bivariate point process $M=(M_1,M_2)$ on $\mathbb R$, $\sigma(M)$ denotes the $\sigma$-algebra generated by $\bigcup_{i=1}^2\{M_i(A):A\in\mcl B(\mathbb R)\}$. 
As $\alpha$-mixing coefficients of $N$, we adopt the following definition:
\besn{\label{def:alpha-mixing}
  \alpha_{c_1, c_2}^N(m; r) =
  \sup \Bigl\{
   & \alpha(\sigma(N\cap E_1), \sigma(N\cap E_2)):   
    E_1 = \bigcup_{j\in J_1}I_j\oplus r,\, E_2 = \bigcup_{j\in J_2}I_j\oplus r, \\
   & |J_1|\leq c_1,\, |J_2|\leq c_2,\,
  d(J_1, J_2)\geq m,\, J_1, J_2\subset \mathbb Z
  \Bigr\}, \quad m, c_1, c_2, r \geq 0,
}
where $I_j:=I_j^1=(j,j+1]$ and $d(J_1,J_2):=\inf\{|j_1-j_2|:j_1\in J_1,\,j_2\in J_2\}$. 
This definition is a minor variant of the one used in \cite{shiotani2024statistical}, where they use $(j-\frac{1}{2},j+\frac{1}{2}]$ instead of $I_j$. 
This difference is inessential, and we just adopt the present definition to (slightly) simplify our technical arguments. 

\begin{rmk}[Mixing coefficients as a continuous-time process]
Since $N$ can be regarded as a stochastic process indexed by $\mathbb R$, we can also define the $\alpha$-mixing coefficients in this sense \cite{brillinger1976estimation}:
\[
\alpha^N_{\text{proc}}(\tau):=\sup_{t\in\mathbb R}\alpha\bra{\sigma(N\cap(-\infty,t)),\,\sigma(N\cap(t+\tau,\infty))},\qquad\tau\geq0.
\]
Our definition is weaker than this version in the sense that we have
\ben{\label{mixing-rel}
\alpha^N_{c_1,c_2}(m;r)\leq\alpha^N_{\text{proc}}(m-2r)
}
for all $c_1,c_2,r\geq0$ and $m\geq2r$. 
For the case of point processes, it is important to work with the weaker version because it is sometimes difficult to bound the left hand side of \eqref{mixing-rel} uniformly in $c_1,c_2\geq0$; see \cite[Proposition 2.8]{poinas2019mixing} and \cite[Lemma 10.13]{shiotani2024statistical} for example. 
\end{rmk}

We impose the following regularity conditions on $N$. 
\begin{enumerate}[label={\normalfont[A1]}]




\item\label{ass:pp} (i) For every $p\geq1$, there exists a constant $B_p>0$ such that $\max_{i=1,2}\lambda_i^{-1}\|N_i((0,1])\|_p\leq B_p$. 

(ii) For any $p,q\geq1$, there exists a constant $B_{p,q}>0$ such that 
$
\alpha_{p,p}^N(m;r_1)\leq B_{p,q}m^{-q}
$
for all $m\in\mathbb N$, where $r_1:=r+1$.  

\end{enumerate}
This assumption is fairly reasonable in the literature and is satisfied by many standard point process models such as the Hawkes process and Neyman-Scott process as long as their kernels satisfy some regularity conditions; see \cref{sec:sim-models} for details. 

Under \ref{ass:pp} and additional technical assumptions, we have the following asymptotic representation of Dobrev--Schaumburg's cross-market activity measure:
\begin{proposition}\label{prop:ds-kernel}
Assume \ref{ass:pp}. 
Assume also that $g$ is bounded and
\if0
\[
\max_{l\in\mcl G_T}M_{[2,1]}(I_0^h\times I_0^h\times I_l^h)=o(h^{1+\ul\alpha})
\quad\text{and}\quad
\max_{l\in\mcl G_T}M_{[1,2]}(I_0^h\times I_l^h\times I_l^h)=o(h^{1+\ul\alpha})
\quad\text{as }T\to\infty.
\]
\besn{\label{ds-extra}
\max_{\ell\in\mcl G_h}\E[N_1(I_0^h)\{N_1(I_0^h)-1\}N_2( I_\ell^h)]&=o(h^{1+\ul\alpha}),\\
\max_{\ell\in\mcl G_h}\E[N_1(I_0^h)N_2(I_\ell^h)\{N_2(I_\ell^h)-1\}]&=o(h^{1+\ul\alpha})
}
\fi
\besn{\label{ds-extra}
\max_{\ell\in\mcl G_h}\E[N_1(I_0^h)\{N_1(I_0^h)-1\}N_2( I_\ell^h)]&=o(h^{1+\varpi}),\\
\max_{\ell\in\mcl G_h}\E[N_1(I_0^h)N_2(I_\ell^h)\{N_2(I_\ell^h)-1\}]&=o(h^{1+\varpi})
}
as $h\to0$ for $\varpi=1$. 
Moreover, assume $h=h_T\asymp T^{-\gamma}$ as $T\to\infty$ for some $0<\gamma<1$. 
Then
\ba{
\max_{\ell\in\mcl G_h}\abs{\frac{\dsr_h(\ell)}{h}-(\lambda_1\vee\lambda_2)\int_{\mathbb R}\frac{1}{h}K^\tri\bra{\frac{u-\ell h}{h}}g(u)du}\to^p0,
}    
where $K^\tri(x)=(1-|x|)1_{[-1,1]}(x)$.
\end{proposition}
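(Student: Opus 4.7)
The plan is to write $\dsr_h(\ell)/h = \mathrm{num}_\ell/(h\,\mathrm{denom}_\ell)$, where $\mathrm{num}_\ell = \sum_{k=|\ell|}^{T/h-1-|\ell|} 1_{\{N_1(I_k^h)>0,\,N_2(I_{k+\ell}^h)>0\}}$ and $\mathrm{denom}_\ell$ is the corresponding minimum, then show that suitably normalized versions of $\mathrm{num}_\ell$ and $\mathrm{denom}_\ell$ converge uniformly in $\ell\in\mcl G_h$, and conclude by the continuous mapping theorem. The first step is to replace each indicator in $\mathrm{num}_\ell$ by the product $N_1(I_k^h)N_2(I_{k+\ell}^h)$, which exposes the moment identity \eqref{def:cross-intensity}. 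This uses the elementary inequality
\[
0 \leq ab - 1_{\{a>0,\,b>0\}} \leq a(a-1)b + ab(b-1),\qquad a,b\in\mathbb{Z}_{\geq 0},
\]
applied with $a=N_1(I_k^h)$ and $b=N_2(I_{k+\ell}^h)$; summing over $k$ and invoking stationarity together with \eqref{ds-extra} shows that the $L^1$-discrepancy between $\mathrm{num}_\ell$ and $S_\ell := \sum_{k=|\ell|}^{T/h-1-|\ell|} N_1(I_k^h)N_2(I_{k+\ell}^h)$ is at most $(T/h)\cdot o(h^{1+\varpi})$ uniformly in $\ell\in\mcl G_h$.

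Next, by \eqref{def:cross-intensity}, Fubini's theorem, and the substitution $v=y-x$, the triangular weight emerges as the length of the overlap between $I_0^h$ and $I_\ell^h - v$:
\[
\E\bra{N_1(I_0^h)N_2(I_\ell^h)} = h\int_{\mathbb{R}} K^\tri\!\bra{\tfrac{v-\ell h}{h}}\lambda_{12}(v)\,dv,
\]
so that $\E[S_\ell] = (T/h-2|\ell|)\cdot h^2\lambda_1\lambda_2 \int (1/h)K^\tri((u-\ell h)/h)g(u)\,du$, which after normalization by $hT$ matches the target integral up to a $1-O(h)$ factor uniform in $\ell$. For the uniform-in-$\ell$ concentration, observe that $Y_k^\ell := N_1(I_k^h)N_2(I_{k+\ell}^h)$ is $\sigma(N\cap(I_k^h\cup I_{k+\ell}^h))$-measurable, with $I_k^h\cup I_{k+\ell}^h$ contained in the $r_1$-enlargement of at most two unit intervals; hence the mixing of the stationary array $\{Y_k^\ell\}_k$ is governed \emph{uniformly in $\ell$} by $\alpha^N_{2,2}(\cdot\,;r_1)$, which by \ref{ass:pp}(ii) decays polynomially of arbitrary order. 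A factorial-moment analysis under \ref{ass:pp}(i) yields $\|Y_k^\ell\|_2 = O(h)$ and $\|Y_k^\ell\|_p = O(h^{1/p})$ for every $p\geq 2$, and a Rosenthal-type $L^p$ inequality for $\alpha$-mixing sums then gives $\|S_\ell-\E[S_\ell]\|_p \leq C_p(Th)^{1/2}$ for $p$ sufficiently large. Markov's inequality combined with a union bound over the $|\mcl G_h|\asymp T^\gamma$ grid points yields $\max_{\ell\in\mcl G_h}|S_\ell-\E[S_\ell]|/(hT)\to^p 0$, achievable for any $\gamma<1$ by choosing $p > 2\gamma/(1-\gamma)$.

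The denominator is handled by the same machinery: replacing $ab$ with $a$ in the indicator-to-count inequality produces errors of order $\E[N_a(I_0^h)(N_a(I_0^h)-1)] = O(h^2)$ under \ref{ass:pp}, and the analogous mixing-based concentration yields $T^{-1}\mathrm{denom}_\ell \to^p \min(\lambda_1,\lambda_2)$ uniformly in $\ell$. Combining the two limits via $\lambda_1\lambda_2/\min(\lambda_1,\lambda_2) = \lambda_1\vee\lambda_2$ and the continuous mapping theorem closes the proof. The chief technical obstacle is the uniform-in-$\ell$ concentration step: both the supports of the $Y_k^\ell$ and their mixing coefficients depend on $\ell$, and the resulting $L^p$ bound must be sharp enough to absorb a union bound over a grid of size $T^\gamma$. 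This is precisely what the tailored mixing definition \eqref{def:alpha-mixing} (rather than the process-level coefficient of the remark following it) together with the all-order moment and polynomial-of-any-order mixing assumptions of \ref{ass:pp} are designed to provide.
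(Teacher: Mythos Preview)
Your outline shares the paper's architecture—separate numerator and denominator, identify the triangular kernel via the overlap formula, and finish with $\lambda_1\lambda_2/(\lambda_1\wedge\lambda_2)=\lambda_1\vee\lambda_2$—but it reverses the order of the two key steps, and this creates genuine gaps. The paper first concentrates the \emph{indicator} sum $\ds_h(\ell)$ around its mean (\cref{lemma:ds-raw}) and only afterwards replaces the \emph{expected} indicator by the expected product via your pointwise inequality and \eqref{ds-extra}; because this replacement is done at the level of deterministic means, the $\max_\ell$ comes for free. You instead replace at the sample level and then concentrate $S_\ell$. But your claimed bounds $\|Y_k^\ell\|_2=O(h)$ and $\|Y_k^\ell\|_p=O(h^{1/p})$ require bounded factorial moment densities of orders $(2,2)$ and higher, which are \emph{not} assumed: \eqref{ds-extra} covers only orders $(2,1)$ and $(1,2)$, and \ref{ass:pp}(i) bounds moments of $N_i((0,1])$ but says nothing about how $\E[N_i(I_0^h)^p]$ scales as $h\to0$. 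Under the stated hypotheses one only has $\|Y_k^\ell\|_p=O(1)$, which is far too crude for your Rosenthal bound to yield $\|S_\ell-\E[S_\ell]\|_p\leq C_p(Th)^{1/2}$. The paper sidesteps this by blocking the indicator sum into unit-interval pieces $Y_j(\ell)=\sum_{k:I_k^h\subset I_j}1_{\{\cdot\}}$: then $Y_j(\ell)\leq N_1(I_j)$ controls all moments via \ref{ass:pp}(i), the first-moment bound $\E[Y_j(\ell)]=O(h)$ feeds into the Doukhan--Louhichi inequality (\cref{lemma:mom}), and the mixing coefficients $\alpha_{p,p}^N(m;r_1)$ apply at the correct scale. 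In your formulation the array $\{Y_k^\ell\}_k$ is indexed at scale $h$, so at $k$-lag $m<1/h$ the corresponding unit-interval separation is zero and no mixing decay is available; the sentence ``the mixing \ldots\ is governed by $\alpha^N_{2,2}(\cdot;r_1)$'' glosses over exactly this point.

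The denominator step has the same defect: the claim $\E[N_a(I_0^h)(N_a(I_0^h)-1)]=O(h^2)$ needs a bounded auto-second-order intensity for $N_a$, which is nowhere assumed. The paper avoids this by concentrating the indicator sum directly (\cref{lemma:ds-denom}) and invoking $\pr(N_a(I_0^h)>0)/h\to\lambda_a$, which holds for any simple stationary point process with finite intensity.
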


\begin{rmk}[On condition \eqref{ds-extra}]
The quantities on the left hand sides of \eqref{ds-extra} can be related to the factorial moment measures of orders (2,1) and (1,2) of $N$ (see Section 2.3 of \cite{shiotani2024statistical} for the definition). 
In particular, \eqref{ds-extra} holds for $\varpi=1$ if these measures have bounded densities with respect to the Lebesgue measure. 
Since each factorial moment measure can be expressed as a sum of factorial cumulant measures through \cite[Eq.(2.5)]{shiotani2024statistical} (see also \cite[Eq.(3.21)]{brillinger1972spectral}), its density can be computed for the Hawkes process via \cite[Eq.(39)]{jovanovic2015cumulants} and the Neyman-Scott process via \cite[Eq.(5.2)]{shiotani2024statistical}, respectively; hence, one can in principle verify \eqref{ds-extra} under appropriate assumptions on their kernels. 
We do not pursue this point further because this condition is unnecessary for the theoretical development of our new estimator proposed in the next section.  
\end{rmk}

Under the assumptions of \cref{prop:ds-kernel}, we have $\max_{\ell\in\mcl G_h}|\dsr_h(\ell)/h-(\lambda_1\vee\lambda_2)g(\ell h)|\to^p0$ if $g$ is continuous. 
Hence, the cross-market activity measure can be interpreted as an estimator for the CPCF up to a multiplicative constant. 
Moreover, if $g$ has a unique maximizer $\theta^*$ in $(-r,r)$, the above result implies that $\hat\theta_h^{DS}$ is a consistent estimator of $\theta^*$ as $T\to\infty$. 
For further understanding of theoretical properties of $\hat\theta_h^{DS}$, we study its rate of convergence. 
For this purpose, we introduce the following assumption on $g$:
\begin{enumerate}[label={\normalfont[A2]}]

\item\label{ass:cpcf} 
There exist constants $\theta^*\in(-r,r)$, $\alpha\in(0,1)\cup(1,\infty)$, $b>1$ and $\delta>0$ such that the following conditions hold: 
\begin{enumerate}[label={\normalfont(\roman*)}]

\item\label{type-I} 
If $\alpha>1$, $\sup_{u\in\mathbb R}g(u)\leq b$ and 
\[
\min\cbra{\sup_{0<u-\theta^*<\delta}\frac{g(\theta^*)-g(u)}{|u-\theta^*|^{\alpha-1}},\sup_{0<\theta^*-u<\delta}\frac{g(\theta^*)-g(u)}{|u-\theta^*|^{\alpha-1}}}\leq b
\]
and
\[
\inf_{0<|u-\theta^*|<\delta}\frac{g(\theta^*)-g(u)}{|u-\theta^*|^{\alpha-1}}\geq \frac{1}{b}
\]
and
\[
\sup_{|u-\theta^*|\geq\delta}g(u)\leq g(\theta^*)-\frac{1}{b}.
\]

\item\label{type-II} 
If $\alpha<1$, 
\[
\max\cbra{\inf_{0<u-\theta^*<\delta}\frac{g(u)}{|u-\theta^*|^{\alpha-1}},\inf_{-\delta<u-\theta^*<0}\frac{g(u)}{|u-\theta^*|^{\alpha-1}}}\geq \frac{1}{b}.
\]
Moreover, there exist a constant $\alpha<\alpha_0\leq1$ and a measurable function $g_0:\mathbb R\to[0,\infty]$ such that $\|g_0\|_{L^{1/(1-\alpha_0)}([-r_1,r_1])}\leq b$ and
\[
g(u)\leq b\bra{g_0(u)+|u-\theta^*|^{\alpha-1}}\quad\text{for all }u\in[-r_1,r_1].
\]

\end{enumerate}

\end{enumerate}

Under \ref{ass:cpcf}\ref{type-I}, $g$ is bounded and $\theta^*$ is the unique maximizer of $g$. 
By contrast, under \ref{ass:cpcf}\ref{type-II}, $g$ diverges at $\theta^*$ and is therefore unbounded. 
We allow $g$ to have poles other than $\theta^*$ through an auxiliary function $g_0$; however, the condition $\|g_0\|_{L^{1/(1-\alpha_0)}([-r_1,r_1])}<\infty$ ensures that $\theta^*$ remains the ``sharpest'' peak location of $g$. 
We call $\theta^*$ the \emph{lead-lag (time) parameter}. 
We allow $g$ to be unbounded motivated by several empirical observations: (i) Dobrev–Schaumburg's cross-market activity measure often exhibits extremely sharp peaks (see e.g.~\cite[Fig.~7]{dobrev2017high}); (ii) \citet{shiotani2024statistical} found that their semiparametric model fits Japanese stock market data better when the CPCF is unbounded; (iii) \citet{rambaldi2018detection} report that intensity burst occurrences across different foreign exchange rates exhibit lead-lag relationships. 

\begin{rmk}[Relation to mode estimation]\label{rmk:mode}
Our formulation of the lead-lag parameter estimation problem is naturally connected to mode estimation for a probability density function, once the CPCF is replaced by the density. 
\ref{ass:cpcf} is motivated by this observation. 
In fact, \citet{wegman1971note} classifies the problem of mode estimation for unimodal distributions into three types, labeling cases with bounded density as Type I, cases with unbounded density as Type II, and cases without density as Type III. 
In this classification, \ref{ass:cpcf}\ref{type-I} corresponds to Type I, and \ref{ass:cpcf}\ref{type-II} corresponds to Type II. 
For the purpose of analyzing the convergence rate, we assume that the CPCF behaves like a power function $|u-\theta^*|^{\alpha-1}$ in a neighborhood of $\theta^*$. 
When $\alpha>1$, this assumptions is analogous to the conditions studied in \cite{arias2022estimation} (see Section 1.1 ibidem). 
When $\alpha<1$, it corresponds to the analogue of condition [\textbf{H-III}] in \cite{bercu2002estimation}. 
Note that the case $\alpha=1$ is excluded simply because the power function $|u-\theta^*|^{\alpha-1}$ becomes constant in that case. 
\end{rmk}
%
%
Under \ref{ass:cpcf}, we set 
\[
\beta_\alpha:=\alpha\vee(2\alpha-1)=\begin{cases}
\alpha & \text{if }\alpha<1,\\
2\alpha-1 & \text{if }\alpha>1.
\end{cases}
\]

\begin{theorem}\label{thm:ds}
Assume \ref{ass:pp}, \ref{ass:cpcf} and $h=h_T\asymp T^{-\gamma}$ as $T\to\infty$ for some $0<\gamma<1/\beta_\alpha$. 
Moreover, assume \eqref{ds-extra} for $\varpi=\alpha$. 
Then, $\hat\theta_h^{DS}=\theta^*+O_p(h)$ as $T\to\infty$.
\end{theorem}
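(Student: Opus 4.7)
The plan is to combine a rate-improved version of \cref{prop:ds-kernel} with a quantitative margin bound on the kernel-smoothed CPCF. Define $\tilde g(\ell):=\int K^\tri(v)g(\ell h+vh)\,dv$, so that \cref{prop:ds-kernel} reads $\max_{\ell\in\mcl G_h}|\dsr_h(\ell)/h-(\lambda_1\vee\lambda_2)\tilde g(\ell)|\to^p 0$. The argument proceeds by sharpening this approximation to the rate $o_p(h^{\alpha-1})$, showing that $\tilde g$ has a well-separated maximum on $\mcl G_h$ up to scale $h$, and then combining the two via a standard M-estimator-type contradiction.

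The main technical obstacle is the rate refinement $\max_{\ell\in\mcl G_h}|\dsr_h(\ell)/h-(\lambda_1\vee\lambda_2)\tilde g(\ell)|=o_p(h^{\alpha-1})$. This is automatic for $\alpha<1$ (since $h^{\alpha-1}\to\infty$) but requires work for $\alpha>1$. The error splits into a smoothing (bias) part and a stochastic fluctuation part. The bias part is controlled by \eqref{ds-extra} with $\varpi=\alpha$, giving an $O(h^\alpha)=o(h^{\alpha-1})$ contribution. For the fluctuation part, the moment and mixing bounds in \ref{ass:pp} yield a pointwise standard deviation of order $(Th)^{-1/2}$ for Type I and order $\sqrt{h^{\alpha-2}/T}$ for Type II (the latter reflecting the divergence of $g$ at $\theta^*$); a union bound over the $O(T/h)$ indices, combined with a Bernstein-type inequality for $\alpha$-mixing partial sums, yields a uniform deviation of the same order up to a logarithmic factor. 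In both cases, the hypothesis $\gamma<1/\beta_\alpha$ is precisely equivalent to this uniform deviation being $o(h^{\alpha-1})$.

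The second step establishes the margin: there exist constants $M_0,c,h_0>0$ and a benchmark index $\ell^*_h\in\mcl G_h$ with $|\ell^*_h h-\theta^*|\in[h,2h]$ such that $\tilde g(\ell^*_h)-\tilde g(\ell)\geq ch^{\alpha-1}$ whenever $h\leq h_0$, $M\geq M_0$, and $|\ell h-\theta^*|\geq Mh$. One chooses $\ell^*_h$ on the side of $\theta^*$ for which the one-sided lower bound in \ref{ass:cpcf} is available. In Type I, direct integration against $K^\tri$ yields $\tilde g(\ell^*_h)\geq g(\theta^*)-C_0 h^{\alpha-1}$, while the two-sided upper bound $g(u)\leq g(\theta^*)-|u-\theta^*|^{\alpha-1}/b$ near $\theta^*$ and the separation $g(u)\leq g(\theta^*)-1/b$ for $|u-\theta^*|\geq\delta$ give $\tilde g(\ell)\leq g(\theta^*)-c_1 M^{\alpha-1}h^{\alpha-1}$ for $|\ell h-\theta^*|\geq Mh$, which exceeds $C_0 h^{\alpha-1}$ for $M$ large. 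In Type II, the local lower bound $g(u)\geq|u-\theta^*|^{\alpha-1}/b$ yields $\tilde g(\ell^*_h)\geq c_0 h^{\alpha-1}$, while the decomposition $g(u)\leq b(g_0(u)+|u-\theta^*|^{\alpha-1})$ combined with Hölder's inequality using $\|g_0\|_{L^{1/(1-\alpha_0)}([-r_1,r_1])}\leq b$ gives $\tilde g(\ell)\leq(Ch^{\alpha_0-\alpha}+bM^{\alpha-1})h^{\alpha-1}$, which is below $c_0 h^{\alpha-1}/2$ for $h$ small and $M$ large (exploiting $\alpha_0>\alpha$ and $\alpha-1<0$).

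Finally, on the event $\{|\hat\theta_h^{DS}-\theta^*|>Mh\}$ with $M\geq M_0$, the margin bound gives $(\lambda_1\vee\lambda_2)(\tilde g(\ell^*_h)-\tilde g(\hat\ell))\geq c(\lambda_1\vee\lambda_2)h^{\alpha-1}$, whereas the optimality $\dsr_h(\hat\ell)\geq\dsr_h(\ell^*_h)$ together with the rate-improved uniform approximation gives $(\lambda_1\vee\lambda_2)(\tilde g(\ell^*_h)-\tilde g(\hat\ell))\leq 2\max_{\ell\in\mcl G_h}|\dsr_h(\ell)/h-(\lambda_1\vee\lambda_2)\tilde g(\ell)|=o_p(h^{\alpha-1})$. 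This forces $\pr(|\hat\theta_h^{DS}-\theta^*|>Mh)\to 0$ for any $M\geq M_0$, yielding $\hat\theta_h^{DS}=\theta^*+O_p(h)$.
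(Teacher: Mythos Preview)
Your approach is essentially the same as the paper's: establish the uniform approximation at rate $o_p(h^{\alpha-1})$ via a bias/fluctuation split (the paper's concentration lemmas for $\ds_h$ and the denominator, combined with \eqref{ds-extra}), prove a margin bound of order $h^{\alpha-1}$ for the smoothed CPCF (your second step is exactly the content of the paper's separation lemma for $f_h$), and conclude by the standard M-estimator argument.

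One technical point deserves care. Your target is $\dsr_h(\ell)/h\approx(\lambda_1\vee\lambda_2)\tilde g(\ell)$ at rate $o_p(h^{\alpha-1})$, but for $\alpha>1$ this would force $\pr(N_a(I_0^h)>0)/h-\lambda_a=o(h^{\alpha-1})$, which is not implied by \ref{ass:pp} or \eqref{ds-extra} (it amounts to a rate on the marginal second factorial moment on a shrinking interval, not the cross quantities appearing in \eqref{ds-extra}). The paper sidesteps this by comparing
\[
\frac{\pr(N_1(I_0^h)>0)\wedge\pr(N_2(I_0^h)>0)}{\lambda_1\lambda_2 h}\,\dsr_h(\ell)
\]
to $\tilde g(\ell)$ instead; since a positive deterministic rescaling leaves the argmax unchanged, nothing is lost in the final step. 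With this adjustment your argument goes through. A smaller remark: your claim that the rate refinement is ``automatic for $\alpha<1$'' is not quite right, since \cref{prop:ds-kernel} assumes $g$ bounded and both $\dsr_h(\ell)/h$ and $\tilde g(\ell)$ can themselves be of order $h^{\alpha-1}$; Type~II still needs the fluctuation bound $\sqrt{h^{\alpha-2}/T}=o(h^{\alpha-1})$ that you correctly sketch afterwards.
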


Since $\hat\theta_h^{DS}$ is, by construction, an integer multiple of $h$, \cref{thm:ds} implies that its convergence rate is exactly of order $O(h)$. 
In particular, the accuracy of $\hat\theta_h^{DS}$ improves as $h$ becomes smaller. 
Given the definition of $\dsr_h(\ell)$, however, it is meaningless to choose $h$ smaller than the minimum time unit in the data. 
In this sense, it is reasonable that \citet{dobrev2017high} set $h$ equal to the minimum time resolution. 

Nevertheless, \cref{thm:ds} also imposes a constraint on the order of $h$, requiring that $h$ converge to 0 more slowly than $T^{-1/\beta_\alpha}$. 
Moreover, this constraint is not a technical artifact of the proof; it is essential, because we show that $T^{-1/\beta_\alpha}$ gives a minimax lower bound on the convergence rate of any estimator of $\theta^*$ under the assumptions of \cref{thm:ds}, at least when $\alpha<2$ (see \cref{rmk:ds-extra-bp}). 
Therefore, it is theoretically impossible for the conclusion of \cref{thm:ds} to hold when $h\ll T^{-1/\beta_\alpha}$. 

Note that $n_a/T \to \lambda_a > 0$ as $T \to \infty$ a.s.~for each $a=1,2$ under \ref{ass:pp}. 
Hence, for $\hat\theta_h^{DS}$ to perform properly, $h$ must be chosen based on $n_1,n_2$ (the sample sizes) and $\alpha$ (the sharpness of the CPCF peak). 
Specifically, $h$ should be taken larger when $n_1$ and $n_2$ are smaller and/or when $\alpha$ is larger. 
This provides one explanation for the poor performance of the Dobrev--Schaumburg method in the example shown in \cref{fig:ds-vs-cpcf}. 
That is, in that dataset, setting $h=1~\mu\text{s}$ was likely far too small, given the relatively small numbers of observations $(n_1,n_2)=(28048,11287)$. 

These observations indicate that the choice of $h$ plays a crucial role in the implementation of the DS estimator. 
However, the DS estimator should be interpreted as an estimator of an interval containing the lead-lag parameter, rather than the parameter itself, which makes data-driven selection of $h$ difficult. 
For these reasons, in the next section, we propose a new lead-lag time estimator based on kernel density estimation and demonstrate that it can overcome this issue. 


\section{New estimator}\label{sec:kde}

\cref{prop:ds-kernel} suggests that $\dsr_h(\ell)$ would be asymptotically equivalent to a discretized version of a kernel density estimator for $g$ based on the triangular kernel. 
This naturally motivates us to consider kernel density estimators for $g$ directly. 

Formally, let $K:\mathbb R\to\mathbb R$ be a kernel function and $h=h_T>0$ a bandwidth parameter such that $h\to0$ as $T\to\infty$. 
We then consider the following statistic:
\[
\hat g_h(u)=\frac{T}{n_1n_2}\int_{(0,T]^2}K_h(y-x-u)N_1(dx)N_2(dy),\qquad u\in\mathbb R,
\]
where $K_h(t)=h^{-1}K(t/h)$ for $t\in\mathbb R$. 
We estimate $\theta^*$ by taking a maximizer of $\hat g_h$. 
That is, we define a random variable $\hat\theta_h$ satisfying
\[
\hat\theta_h\in\argmax_{u\in[-r,r]}\hat g_h(u).
\]
The practical procedure for computing $\hat\theta_h$ and its computational complexity are described in Appendix \ref{app:computation}.

When the uniform kernel is used as the kernel function, $\hat g_h$ is essentially equivalent to the so-called \textit{cross-correlation histogram} in neuroscience and has long been applied to investigate relationships between neuronal spikes (see e.g.~\cite{bryant1973correlations}). 
This line of work has motivated statisticians to investigate the theoretical properties of $\hat g_h$ \citep{cox1972multivariate,brillinger1975statistical,brillinger1976estimation,ellis1991density}. 
Nevertheless, to the best of our knowledge, no prior research has addressed the case where $g$ is unbounded, nor the asymptotic behavior of the maximizer of $\hat g_h$. 

We impose the following assumption on the kernel. 
\begin{enumerate}[label={\normalfont[K]}]

\item\label{kernel} $K$ is non-negative, continuous at 0, of bounded variation and supported on $[-1,1]$ such that $K(0)>0$, $\int_{-\infty}^\infty K(t)dt=1$ and $\argmax_{u\in[-r,r]}\hat g_h(u)\neq\emptyset$ a.s.

\end{enumerate}
Assumption \ref{kernel} holds for the uniform kernel $K=\frac{1}{2}1_{[-1,1]}$ and the triangular kernel $K=K^\tri$, for example. 

\begin{theorem}\label{thm:cpcf}
Assume \ref{ass:pp}, \ref{ass:cpcf} and \ref{kernel}. 
Let $\eta>0$ be a constant. 
Then, there exist constants $A>1$ and $0<h_0<1$ depending only on $\alpha,\alpha_0,\delta,b$ and $K$ such that
\ben{\label{eq:cpcf}
\pr(|\hat\theta_h-\theta^*|>Ah)\leq\frac{C}{\sqrt{Th^{\beta_\alpha+\eps}}}
}
for all $h\leq h_0\wedge T^{-\eta}$ and $\eps>0$, where $C>0$ is a constant depending only on $r,\alpha,\delta,b$, $(B_p)_{p\geq1}$, $(B_{p,q})_{p,q\geq1}$, $\eps,\eta$ and $\|K\|_\infty$. 
In particular, $\hat\theta_h=\theta^*+O_p(h)$ as $T\to\infty$ if $h=h_T\asymp T^{-\gamma}$ for some $0<\gamma<1/\beta_\alpha$. 
\end{theorem}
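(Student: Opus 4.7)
The plan is to cast the result as an M-estimation problem for the smoothed population curve $M_h(u):=(K_h\ast g)(u)$, which by \ref{ass:cpcf} retains a well-separated maximum near $\theta^*$. Since the random prefactor $T/(n_1n_2)$ in $\hat g_h(u)$ does not depend on $u$, the argmax is unchanged if I work instead with
\[
J_h(u):=\int_{(0,T]^2}K_h(y-x-u)\,N_1(dx)N_2(dy),\qquad u\in[-r,r].
\]
Using \eqref{def:cross-intensity} and the substitution $v=y-x$, one sees that $\bar J_h(u):=\E[J_h(u)]=T\lambda_1\lambda_2 M_h(u)+O(1)$ uniformly in $u\in[-r,r]$, the $O(1)$ term coming from the boundary factor $1-|v|/T$ and being negligible against $T\lambda_1\lambda_2 M_h$ in the regime $h\leq T^{-\eta}$.

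Next, I would quantify the sharpness of the peak of $M_h$. In Type I ($\alpha>1$), the two-sided H\"older-type bound in \ref{ass:cpcf}\ref{type-I}, combined with the change of variable $v=u+sh$, yields $M_h(\theta^*)-M_h(u)\geq c_1|u-\theta^*|^{\alpha-1}$ for every $u\in[-r,r]$ with $|u-\theta^*|\geq Ah$, provided $A>1$ is chosen large enough depending only on $\alpha,b,\delta,K$. In Type II ($\alpha<1$), the pole lower bound gives $M_h(\theta^*)\gtrsim h^{\alpha-1}\int K(s)|s|^{\alpha-1}ds$; to bound $M_h(u)$ from above at $u$ away from $\theta^*$, I decompose $g\leq b(g_0+|\cdot-\theta^*|^{\alpha-1})$ from \ref{ass:cpcf}\ref{type-II} and apply H\"older's inequality with exponents $(1/\alpha_0,1/(1-\alpha_0))$ to the $g_0$-contribution, obtaining $(K_h\ast g_0)(u)\lesssim h^{\alpha_0-1}$, which is of strictly smaller order than $h^{\alpha-1}$ because $\alpha<\alpha_0$. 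Together with a direct estimate on $(K_h\ast|\cdot-\theta^*|^{\alpha-1})(u)$ for $|u-\theta^*|\geq Ah$, this produces $M_h(\theta^*)-M_h(u)\geq c_2 h^{\alpha-1}$ once $A$ is sufficiently large.

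For the stochastic part I would expand $\Var(J_h(u))$ via the factorial moment measures of $N$ up to order four. The diagonal piece reduces to $\int\int K_h^2(y-x-u)\lambda_{12}(y-x)\,dx\,dy$, which is of order $T/h$ in Type I and of order $Th^{\alpha-2}$ at $u=\theta^*$ in Type II; the off-diagonal pieces are controlled by combining the moment bounds in \ref{ass:pp}(i) with the strong-mixing decay in \ref{ass:pp}(ii), and are of no larger order. To pass from pointwise to uniform control on $[-r,r]$, I would exploit the bounded-variation structure of $K$ in \ref{kernel}, discretize $u$ on a grid of mesh $T^{-q}$ for $q$ large, bound the discretization error by the total variation of $u\mapsto J_h(u)-\bar J_h(u)$, and close the argument with a union bound whose polynomial overhead in $T$ is absorbed into the free exponent $\eps$.

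Finally, on the event $\{|\hat\theta_h-\theta^*|>Ah\}$, the argmax inequality $J_h(\hat\theta_h)\geq J_h(\theta^*)$ together with the peak gap above yields $T\lambda_1\lambda_2\cdot(\text{gap})\leq 2\sup_u|J_h(u)-\bar J_h(u)|+O(1)$, so Markov's inequality in the form $\pr(|X|>t)\leq\E|X|/t\leq\sqrt{\E X^2}/t$ delivers the announced rate $1/\sqrt{Th^{\beta_\alpha+\eps}}$, the exponent $\beta_\alpha$ emerging from the arithmetic $(\mathrm{SD})/(\mathrm{gap})\asymp 1/\sqrt{Th^{\beta_\alpha}}$ in both regimes. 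The \emph{main obstacle} I expect is the Type II analysis: the singularity of $g$ at $\theta^*$ simultaneously sharpens the peak of $M_h$ and inflates $\Var(J_h(\theta^*))$, so the rate relies on a tight cancellation between these two effects, and the role of the auxiliary exponent $\alpha_0$ in \ref{ass:cpcf}\ref{type-II} is precisely to guarantee that any other singularity of $g$ is strictly less severe than the one at $\theta^*$. A secondary technical point is the uniform sup-norm control of $J_h-\bar J_h$, which is nonetheless routine given the bounded-variation hypothesis on $K$ and the $\eps$ slack in the exponent.
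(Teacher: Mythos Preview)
Your high-level architecture---establish a deterministic gap for $M_h=K_h\ast g$ near $\theta^*$, bound the uniform stochastic fluctuation, and conclude via an argmax inequality and Markov---is exactly the paper's route, and your gap analysis corresponds to the paper's Lemma~\ref{lemma:gap}. Two concrete points, however, do not go through as written.

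\textbf{Type I comparison point.} Condition~\ref{ass:cpcf}\ref{type-I} provides only a \emph{one-sided} upper H\"older bound (the $\min$ of two suprema). If $g$ drops very steeply on the other side of $\theta^*$, then $M_h(\theta^*)=\int K(t)g(\theta^*+ht)\,dt$ can sit well below $g(\theta^*)$, and your claimed inequality $M_h(\theta^*)-M_h(u)\geq c_1|u-\theta^*|^{\alpha-1}$ can fail. The paper avoids this by comparing not at $\theta^*$ but at the shifted point $\theta^*+\sigma v$ with $v\in[h,2h]$, so that the kernel window lies entirely on the side where the one-sided upper bound applies; see Lemma~\ref{lemma:gap}\ref{gap1}.

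\textbf{Stochastic control.} Expanding $\Var(J_h(u))$ through fourth-order factorial moment measures presupposes existence and integrability of densities for the $(2,1)$, $(1,2)$ and $(2,2)$ factorial moment measures, none of which are assumed---indeed, an explicit point of Theorem~\ref{thm:cpcf} relative to Theorem~\ref{thm:ds} is that the factorial-moment condition~\eqref{ds-extra} is \emph{not} required. Moreover, a variance bound alone cannot close your union bound: with a grid of size polynomial in $T$ and Markov via $\sqrt{\Var}$, you acquire a polynomial-in-$T$ multiplier that no choice of $\eps$ absorbs. The paper instead blocks $J_h(u)$ into unit-interval summands $X_j(u)$ and applies a Doukhan--Louhichi $L^p$ moment inequality with truncation (Lemma~\ref{lemma:mom}) to obtain $\|\Delta_T(u)\|_p\lesssim_p (Th^{\tilde\alpha+\eps/2})^{-1/2}$ for arbitrarily large even $p$, using only \ref{ass:pp}(i)--(ii); the factor $N^{1/p}$ from $\E[\max_a|\cdot|]\leq N^{1/p}\max_a\|\cdot\|_p$ is then absorbed into $h^{-\eps/2}$. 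For uniformity, rather than a plain grid plus total-variation bound, the paper writes $K=F\,1_{[-1,1]}$ with $F$ monotone, uses a VC-entropy covering of $\{F_h(\cdot-u):u\in[-r,r]\}$ in $L^1(g\,dv)$ (Lemma~\ref{lemma:vc}), and sandwiches $X_j(u)$ between monotone enlargements $\wt X_j(u_a)$ at the net points (Lemma~\ref{lemma:ucp}). This monotone-sandwich step is what replaces your total-variation discretization control.
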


By \cref{thm:cpcf}, $\hat\theta_h$ estimates $\theta^*$ at the same convergence rate as $\hat\theta_h^{DS}$. 
Moreover, unlike the DS estimator, we do not require assumption \eqref{ds-extra}. 
In particular, by selecting the bandwidth appropriately, our estimator nearly attains the minimax optimal convergence rate $T^{-1/\beta_\alpha}$ (see \cref{thm:minimax}). 
The next subsection discusses methods for selecting the bandwidth in a data-driven way. 

\begin{rmk}[Relation to kernel mode estimation]
In light of \cref{rmk:mode}, our estimator is closely related to a mode estimator obtained by maximizing a kernel density estimator. 
Following \cite{chacon2020modal}, we refer to this type of estimator as the kernel mode estimator. 

For i.i.d.~data with density $f$ and unique population mode $\theta^*$, 
\citet{parzen1962estimation} established the asymptotic normality of the kernel mode estimator when $f$ is of class $C^2$ and $f''(\theta^*)<0$. This setting is a particular case of \ref{ass:cpcf} with $\alpha=3$. 
Notably, with a suitable bandwidth choice, the kernel mode estimator nearly achieves the convergence rate $n^{-1/5}$, where $n$ is the sample size. Since $\beta_3=5$, our estimator enjoys an analogous property. 
\citet{hasminskii1979lower} proved that the rate $n^{-1/5}$ is minimax optimal in this setting, but it can be improved under additional smoothness assumptions on the density and the kernel; see e.g.~\cite{eddy1980optimum,klemela2005adaptive,vieu1996note}. 

Without smoothness assumptions on the density, \citet{abraham2003simple} and \citet{herrmann2004rates} obtained convergence rates for the kernel mode estimator and its variant under conditions analogous to \ref{ass:cpcf}\ref{type-I}. 
In this scenario, \citet{arias2022estimation} established the minimax optimal rate and developed an adaptive estimation procedure. 
The convergence rate in this setting is $n^{-1/(2\alpha-1)}$, which again analogous to ours. 
To the best of our knowledge, apart from the work of \cite{bercu2002estimation}, no results exist for mode estimation under assumptions analogous to \ref{ass:cpcf}\ref{type-II}.
\citet{bercu2002estimation} investigated a histogram-type estimator, which can be viewed as a discretized kernel mode estimator, and showed that its convergence rate can be made arbitrarily close to $n^{-1/\alpha}$ by selecting the bin width appropriately. 
This behavior is also analogous to that of our estimator. 

Finally, to our knowledge, the asymptotic distribution of the kernel mode estimator remains unknown for non-smooth densities. 
We conjecture that it may be non-Gaussian when $\alpha<3/2$, drawing a parallel to location parameter estimation in the presence of density singularities (cf.~\cite[Chapter VI]{ibragimov2013statistical}). 
\end{rmk}

\subsection{Bandwidth selection by Lepski's method}\label{sec:lepski}

In the classical setting of i.i.d.\ observations and kernel density estimation, bandwidth selection is typically based on minimizing the mean integrated squared error (MISE) of the kernel estimator; see, for example, \cite[Chapter 1]{tsybakov2008nonparametric}.
In kernel estimation of moment density functions for point processes, analogous MISE-type criteria are also standard \cite{guan2007least, jalilian2018fast}. 

However, for our purposes, a global loss criterion such as MISE is not entirely satisfactory, because the object of interest is not the function $g$ itself but the location $\theta^\ast$ of its peak.
Intuitively, a global criterion aims to fit the entire curve, including regions where $g$ is only moderately large or small, whereas the estimation error of $\theta^\ast$ may be governed almost exclusively by the local behaviour of $g$ in a small neighbourhood of the peak, which may even be singular under condition \ref{ass:cpcf}(ii). 
A closely related issue has been recognized in the literature on nonparametric modal regression with i.i.d.\ observation; see, for example, Section 4.2 in \citet{chen2018modal}.
Motivated by recent works in this context \cite{chen2016nonparametric, zhou2019bandwidth}, we investigate loss-minimization approaches based on cross-validation in Appendix~\ref{sec:cv}.

Here, we instead adopt an adaptive estimation strategy based on a Lepski-type method, i.e., a pairwise comparison of estimators with different bandwidths. 
Our approach was particularly inspired by \citet{klemela2005adaptive}, who has developed a Lepski-type method for mode estimation from i.i.d.~data. 
For textbook treatments of Lepski's method, we refer to \cite[Section 8.2]{gine2016mathematical}.

Fix constants $a>1$, $\gamma_{\max}>0$ and $j_{\min}\in\mathbb N$. 
We consider the following set as candidates for bandwidths:
\[
\mcl H_T:=\{a^{-j}:j_{\min}\leq j\leq \lceil\log_a(T^{\gamma_{\max}})\rceil,\,j\in\mathbb Z\}.
\]
For every $h\in\mcl H_T$, set
\[
\mcl M_h:=\argmax_{u\in[-r,r]}\hat g_h(u).
\]
We define
\[
\hat h:=\min\cbra{h\in\mcl H_T:\bar d(\mcl M_h,\mcl M_{h'})\leq A_Th'\text{ for all }h'\in\mcl H_T\text{ with }h'\geq h},
\]
where $A_T$ is a positive constant and
\[
\bar d(\mcl M_{h},\mcl M_{h'}):=\sup\cbra{|x-y|:x\in \mcl M_{h},\,y\in \mcl M_{h'}}.
\]

\begin{theorem}\label{thm:adaptive}
Assume \ref{ass:pp}, \ref{ass:cpcf} and \ref{kernel}. 
Also, assume $1/\beta_\alpha\leq\gamma_{\max}$. 
Further, assume $A_T\to\infty$ and $A_T=o(T^c)$ for any $c>0$ as $T\to\infty$. 
Then, $\hat\theta_{\hat h}=\theta^*+O_p(T^{-\gamma})$ for any $0<\gamma<1/\beta_\alpha$. 
\end{theorem}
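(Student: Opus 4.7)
The plan is to apply the standard Lepski-type adaptivity argument: exhibit an oracle bandwidth $h^\star \in \mcl H_T$ slightly coarser than $T^{-1/\beta_\alpha}$, show that with probability tending to one $\hat h \leq h^\star$, and then combine the Lepski constraint at $\hat h$ with the concentration bound from \cref{thm:cpcf} applied at $h^\star$.

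Fix $\gamma \in (0, 1/\beta_\alpha)$ and choose $\gamma' \in (\gamma, 1/\beta_\alpha)$, $\eps_0 > 0$ with $\gamma'(\beta_\alpha + \eps_0) < 1$, and $\eta \in (0, \gamma')$. I will take $h^\star$ to be the largest element of $\mcl H_T$ with $h^\star \leq T^{-\gamma'}$; since $\gamma' \leq \gamma_{\max}$, this is well defined for $T$ large, $h^\star \asymp T^{-\gamma'}$, and $h^\star \leq h_0 \wedge T^{-\eta}$. Applying \cref{thm:cpcf} with $(\eps,\eta)=(\eps_0,\eta)$ at every $h \in \mcl H_T$ with $h^\star \leq h \leq h_0 \wedge T^{-\eta}$, and taking a union bound using $|\mcl H_T| = O(\log T)$ together with $T h^{\beta_\alpha+\eps_0} \geq T (h^\star)^{\beta_\alpha+\eps_0} \asymp T^{1-\gamma'(\beta_\alpha+\eps_0)} \to \infty$, the good event
\[
\Omega_T := \bigcap_{h \in \mcl H_T \cap [h^\star,\, h_0 \wedge T^{-\eta}]} \cbra{\mcl M_h \subset [\theta^* - Ah,\, \theta^* + Ah]}
\]
has $\pr(\Omega_T) \to 1$.

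Next I verify that on $\Omega_T$ the bandwidth $h^\star$ satisfies the Lepski condition, so that $\hat h \leq h^\star$. For $h' \in \mcl H_T$ with $h^\star \leq h' \leq h_0 \wedge T^{-\eta}$, $\bar d(\mcl M_{h^\star}, \mcl M_{h'}) \leq A(h^\star + h') \leq 2A h' \leq A_T h'$ once $A_T \geq 2A$, which holds eventually since $A_T \to \infty$. For $h' \in \mcl H_T$ bounded below by a fixed positive constant, $A_T h' \to \infty$ dominates the trivial bound $\bar d(\mcl M_{h^\star},\mcl M_{h'}) \leq 2r$. Granted this, applying the defining constraint of $\hat h$ with $h' = h^\star \geq \hat h$ yields $\bar d(\mcl M_{\hat h}, \mcl M_{h^\star}) \leq A_T h^\star$, which combined with $|\hat\theta_{h^\star}-\theta^*| \leq A h^\star$ on $\Omega_T$ gives
\[
|\hat\theta_{\hat h} - \theta^*| \leq \bar d(\mcl M_{\hat h}, \mcl M_{h^\star}) + |\hat\theta_{h^\star} - \theta^*| \leq (A_T + A)h^\star.
\]
Choosing $c = \gamma' - \gamma > 0$ in the hypothesis $A_T = o(T^c)$ yields $A_T h^\star = o(T^{\gamma'-\gamma}) \cdot T^{-\gamma'} = o(T^{-\gamma})$, so $\hat\theta_{\hat h} - \theta^* = O_p(T^{-\gamma})$.

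The main obstacle is verifying the Lepski criterion at $h^\star$ against bandwidths $h' \in \mcl H_T$ that lie between $h_0 \wedge T^{-\eta}$ and a constant lower threshold: on this intermediate window \cref{thm:cpcf} does not directly apply, while the trivial bound $\bar d \leq 2r$ is not automatically beaten by $A_T h'$ because $A_T$ grows only sub-polynomially, so $A_T/T^\eta \to 0$ and the window is non-empty for $T$ large. I expect to handle this either by noting that \cref{thm:cpcf}'s bound still tends to zero at any fixed $h \in (0, h_0]$ (allowing the constraint $h \leq T^{-\eta}$ to be relaxed by a separate direct argument at each fixed $h$), or by partitioning the window into $O(1)$ dyadic subintervals $(T^{-\eta_{j+1}}, T^{-\eta_j}]$ and applying \cref{thm:cpcf} with $\eta = \eta_{j+1}$ on each, while tracking the dependence of the constant $C$ on $\eta$ to sum the resulting error probabilities.
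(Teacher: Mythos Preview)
Your approach is essentially the paper's: the paper first proves an auxiliary lemma that $\pr(\hat h>T^{-\gamma'})\to0$ via a union bound over $h'\in\mcl H_T$ with $h'\geq h^\star$, then on the event $\hat h\leq h^\star(\gamma')$ uses the Lepski constraint $|\hat\theta_{\hat h}-\hat\theta_{h^\star}|\leq A_Th^\star$ together with \cref{thm:cpcf} at $h^\star$ to conclude. Your organization via the explicit good event $\Omega_T$ is a cosmetic variant of the same argument.

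The obstacle you flag in your final paragraph---bandwidths $h'$ lying between $T^{-\eta}$ and the threshold $4r/A_T$ where neither \cref{thm:cpcf} nor the trivial bound $\bar d\leq 2r$ applies---is real, and you are being more careful than the paper here: the paper's proof simply writes ``\cref{thm:cpcf} gives'' and applies the bound to all $h'\geq h^\star$ without checking the side condition $h'\leq h_0\wedge T^{-\eta}$. Your second proposed fix (partition the window and apply \cref{thm:cpcf} with smaller $\eta$ on each piece) is the natural route, but since the constant $C$ in \cref{thm:cpcf} depends on $\eta$ through the choice of the moment order $p$ in \cref{lemma:ucp} and \cref{lemma:lp} (roughly $p\sim 1/\eta$, with $C_p$ inherited from \cref{lemma:mom}), making this rigorous requires tracking that dependence---something neither you nor the paper carries out.
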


\cref{thm:adaptive} shows that the estimator $\hat\theta_{\hat h}$ nearly achieves the optimal convergence rate $T^{-1/\beta_\alpha}$ without requiring the precise value of $\alpha$. 
This result is numerically validated in \cref{sec:sim-lepski}, where we also assess the robustness of the estimator with respect to the choice of the tuning parameter $A_T$. 
We will see that setting $A_T$ to a constant multiple of $\log\log T$ performs reasonably well.

\subsection{Minimax lower bound for the convergence rate}

In this subsection, we show that when \ref{ass:pp} and \ref{ass:cpcf} hold, $T^{-1/\beta_\alpha}$ gives a minimax lower bound for the convergence rate of any estimator for $\theta^*$. 
For this purpose, we consider a subclass of models for $N$ satisfying \ref{ass:pp} and \ref{ass:cpcf}, which is specified as follows. 
Given a probability density function $g$ on $\mathbb R$, we consider a probability measure $\pr_g$ on $(\Omega,\mcl F)$ having the following properties:
\begin{enumerate}[label=(\roman*)]

\item 
$N_1=\sum_{i=1}^\infty\delta_{t_i}$ is a Poisson process on $\mathbb R$ with unit intensity under $\pr_g$. 

\item 
$N_2$ is of the form $N_2=\sum_{i=1}^\infty\delta_{t_i+\gamma_i}$, where $(\gamma_i)_{i=1}^\infty$ is a sequence of i.i.d.~random variables independent of $N_1$ such that the law of $\gamma_1$ has density $g$ under $\pr_g$. 

\end{enumerate}
%
Under $\pr_g$, $N$ is a bivariate Poisson process on $\mathbb R$ in the sense of \cite[Example 6.3(e)]{daley2006introduction}, where we have $Q_1=Q_2=0$ and $Q_3(dxdy)=g(y-x)dxdy$ in their notation. 
In particular, under $\pr_g$, the CPCF of $N$ is given by $g$. 

We write $\mcl G(\theta^*,\alpha,\delta,b)$ for the class of probability density functions $g:\mathbb R\to[0,\infty]$ supported on $[-1,1]$ and satisfying the conditions of \ref{ass:cpcf}. 
Note that if $g\in\mcl G(\theta^*,\alpha,\delta,b)$, then $N$ satisfies \ref{ass:pp} and \ref{ass:cpcf} for some family of constants $(B_p)_{p\geq1}$. 
In fact, \ref{ass:cpcf} is evident, while \ref{ass:pp}(i) follows from the fact that both $N_1$ and $N_2$ are Poisson processes on $\mathbb R$ with unit intensity. 
Finally, since $g$ is supported on $[-1,1]$, $\alpha_{c_1,c_2}^N(m;r_1)=0$ for $m\geq m_0$, where $m_0$ depends only on $r$. Hence, \ref{ass:pp}(ii) is also satisfied. 
Given this consideration, the following theorem shows that $T^{-1/\beta_\alpha}$ gives a minimax lower bound for the convergence rate of any estimator for $\theta^*$ under \ref{ass:pp} and \ref{ass:cpcf}:

\if0
\begin{theorem}\label{thm:minimax}
Suppose that there exist a constant $\alpha\in(0,1)\cup(1,3/2)$ such that $f$ is of the form
\[
f(x)=\begin{cases}
    f_0(x)\exp\bra{-|x|^{\alpha-1}} & \text{if }1<\alpha<3/2,\\
    f_0(x)|x|^{\alpha-1} & \text{if }0<\alpha<1,
\end{cases}
\]
where $f_0:\mathbb R\to[0,\infty)$ is a continuous function supported on $[-L,L]$ with some $L>0$ and such that $f_0(0)=\max_{x\in\mathbb R}f_0(x)$ and
\ben{\label{ass:f0}
\int_{\mathbb R}\bra{\sqrt{f_0(x-\eta)}-\sqrt{f_0(x)}}^2dx=O(\eta^2)
}
as $\eta\to0$. 
Then, for any $\theta_0\in\mathbb R$,
\[
\liminf_{T\to\infty}\inf_{\hat\theta_T}\sup_{|\theta-\theta_0|\leq 2\rho_T}\pr_\theta\bra{|\hat\theta_T-\theta|\geq \rho_T}>0,
\]
where $\rho_T:=T^{-\frac{1}{\alpha\vee(2\alpha-1)}}$ and the infimum is taken over all estimators based on $N\cap [0,T]$, i.e.~all $\sigma(N\cap [0,T])$-measurable random variables. 
\end{theorem}
\fi

\begin{theorem}\label{thm:minimax}
For any $\alpha\in(0,1)\cup(1,\infty)$, there exists a constant $b>0$ such that
\[
\liminf_{T\to\infty}\inf_{\hat\theta_T}\sup_{|\theta|\leq 2\rho_T}\sup_{g\in\mcl G(\theta,\alpha,1/2,b)}\pr_g\bra{|\hat\theta_T-\theta|\geq \rho_T}>0,
\]
where $\rho_T:=T^{-1/\beta_\alpha}$ and the infimum is taken over all estimators based on $N\cap [0,T]$, i.e.~all $\sigma(N\cap [0,T])$-measurable random variables. 
\end{theorem}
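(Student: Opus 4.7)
The plan is to deploy Le Cam's two-point method. For each large $T$, I would construct two probability densities $g_0,g_1$ on $\mathbb R$, with modes at $\theta_0:=-\rho_T$ and $\theta_1:=\rho_T$ respectively, both belonging to $\mcl G(\theta_i,\alpha,1/2,b)$ for some $b$ independent of $T$. Since the events $\{|\hat\theta_T-\theta_0|<\rho_T\}$ and $\{|\hat\theta_T-\theta_1|<\rho_T\}$ are disjoint, the standard two-point argument gives $2\sup_{|\theta|\leq 2\rho_T}\sup_{g\in\mcl G(\theta,\alpha,1/2,b)}\pr_g(|\hat\theta_T-\theta|\geq\rho_T)\geq 1-d_{\mathrm{TV}}(\pr_{g_0}|_{N\cap[0,T]},\pr_{g_1}|_{N\cap[0,T]})$. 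Using $d_{\mathrm{TV}}\leq H$, the theorem reduces to showing that the Hellinger distance between the laws of $N\cap[0,T]$ under $\pr_{g_0}$ and $\pr_{g_1}$ remains bounded away from $\sqrt{2}$ uniformly in $T$.

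The key first step is a sharp Hellinger tensorization exploiting the Poisson structure. Because $g_i$ is supported in $[-1,1]$, $N\cap[0,T]$ is measurable with respect to the enlarged data $\Theta:=(N_1\cap[-1,T+1],(\gamma_j)_{j:\,t_j\in[-1,T+1]})$, and the data-processing inequality reduces the problem to bounding $H^2$ between the laws of $\Theta$. Under both $\pr_{g_i}$, $N_1$ is a unit-intensity Poisson process with identical marginal law, and conditionally on $N_1$ the variables $\gamma_j$ are i.i.d.\ with density $g_i$. Conditioning on $N:=N_1([-1,T+1])\sim\mathrm{Poisson}(T+2)$, the Hellinger affinity factorizes as $\E[(1-H^2(g_0,g_1)/2)^N]=\exp(-(T+2)H^2(g_0,g_1)/2)$, so $H^2(\pr_{g_0}|_{N\cap[0,T]},\pr_{g_1}|_{N\cap[0,T]})\leq 2\bigl(1-\exp(-(T+2)H^2(g_0,g_1)/2)\bigr)\leq(T+2)H^2(g_0,g_1)$. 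It therefore suffices to construct $g_0,g_1$ with peaks separated by $2\rho_T$ and $H^2(g_0,g_1)\leq c/T$ for a sufficiently small constant $c$.

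For the construction, simple translates work in the regime where the peak density has infinite Fisher information. For $\alpha\in(0,1)$ (Type II), I would take $g_\theta(u):=c_\alpha|u-\theta|^{\alpha-1}\chi(u-\theta)$ for a fixed smooth symmetric cutoff $\chi$ supported in $[-1/2,1/2]$, normalized to integrate to one; splitting the Hellinger integrand at $|u-\theta_0|=2\rho_T$, the near-singularity region contributes $O(\rho_T^\alpha)$ by direct computation using the closed form of $\sqrt{g_\theta}$, while the outer region contributes $O(\rho_T^2)$ via a mean-value estimate on $\sqrt{g_\theta}$ away from the singularity, giving $H^2(g_0,g_1)=O(\rho_T^\alpha)=O(T^{-1})$. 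For $\alpha\in(1,3/2]$, the analogous translate $g_\theta(u)\propto(1-k_\alpha|u-\theta|^{\alpha-1})_+$ (supported in $[\theta-1/2,\theta+1/2]$) satisfies $(\sqrt{g_\theta})'\notin L^2$ at the peak, and the same splitting yields $H^2(g_0,g_1)=O(\rho_T^{2\alpha-1})=O(T^{-1})$. Membership in $\mcl G(\theta_i,\alpha,1/2,b)$ with a common $b$ is direct in both cases, with the auxiliary function $g_0$ of \ref{ass:cpcf}\ref{type-II} taken as $g_\theta$ itself restricted to $\{|u-\theta|\geq\delta'\}$ for a suitable $\alpha_0\in(\alpha,1)$.

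The main obstacle is the regime $\alpha>3/2$, where $(\sqrt{g_\theta})'\in L^2$ and a pure translate yields only $H^2(g_0,g_1)=O(\rho_T^2)$, far too weak since $\beta_\alpha=2\alpha-1>2$. Here I would switch to a Hasminskii-type perturbation construction: fix a base density $f$ with a peak of exact order $|u|^{\alpha-1}$ at $0$, and set $g_i(u):=f(u)+\rho_T^{\alpha-1}\phi((u-\theta_i)/\rho_T)$ for a fixed smooth mean-zero bump $\phi$ supported in $[-1,1]$ engineered so that $g_i$ attains its maximum precisely at $\theta_i$ with the required Hölder behavior. Because $g_1-g_0$ has sup-norm $O(\rho_T^{\alpha-1})$ on a set of Lebesgue measure $O(\rho_T)$ with $f$ bounded below there, $H^2(g_0,g_1)=O(\rho_T^{2\alpha-1})=O(T^{-1})$. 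The delicate part is engineering $\phi$ so that each $g_i$ genuinely lies in $\mcl G(\theta_i,\alpha,1/2,b)$ with a $T$-independent $b$, i.e., so that the perturbation preserves the prescribed local Hölder exponent on both sides of $\theta_i$, the global upper bound, and the separation condition $\sup_{|u-\theta_i|\geq 1/2}g_i(u)\leq g_i(\theta_i)-1/b$. A convenient device is to impose the reflection symmetry $g_1(u)=g_0(\theta_0+\theta_1-u)$, which automatically transfers the peak condition from $\theta_0$ to $\theta_1$ and reduces the construction to a single careful design of $g_0$ from $f$ and a scale-$\rho_T$ bump near $\theta_0$.
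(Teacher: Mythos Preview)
Your high-level strategy---two-point Le Cam reduction, data-processing to an enlarged Poisson observation, and the affinity identity giving $H^2(P_{T,g_0},P_{T,g_1})=2\bigl(1-\exp(-cT\,H^2(g_0,g_1))\bigr)$, hence reducing everything to $H^2(g_0,g_1)=O(T^{-1})$---is exactly the paper's. The constructions diverge only for $\alpha>1$: you split into translates for $\alpha\in(1,3/2]$ and a smooth-bump perturbation for $\alpha>3/2$, whereas the paper handles all $\alpha>1$ by a single explicit non-smooth perturbation of a fixed base density.

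The perturbation step for $\alpha>3/2$ has a genuine gap: a \emph{smooth} bump cannot manufacture the required cusp. With $\phi\in C^\infty$, the function $g_i(u)=f(u)+\rho_T^{\alpha-1}\phi\bigl((u-\theta_i)/\rho_T\bigr)$ is smooth in a neighborhood of $\theta_i$, because the base $f$ has its only non-smooth point at $0$ and is $C^\infty$ at $\theta_i=\pm\rho_T\neq0$. Writing $v=(u-\theta_i)/\rho_T$ and $\theta_i=\sigma\rho_T$, one gets
\[
g_i(\theta_i)-g_i(u)=\rho_T^{\alpha-1}\Bigl[\phi(0)-\phi(v)+C\bigl(|\sigma+v|^{\alpha-1}-1\bigr)\Bigr],
\]
and both $\phi$ and $v\mapsto|\sigma+v|^{\alpha-1}$ are smooth at $v=0$; after cancelling the linear term the bracket behaves like $c\,v^2+O(v^3)$, so $g_i(\theta_i)-g_i(u)\asymp(u-\theta_i)^2$ near $\theta_i$. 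This violates the two-sided bound $\asymp|u-\theta_i|^{\alpha-1}$ in \ref{ass:cpcf}\ref{type-I} with any $T$-independent $b$ unless $\alpha-1\in2\mathbb N$. The bump must itself carry a cusp of exact order $\alpha-1$. The paper builds this in: it takes $g_0=f_0(x)=\frac{\alpha}{2(\alpha-1)}(1-|x|^{\alpha-1})1_{[-1,1]}(x)$ unperturbed, and $g_T=f_0+\frac{\alpha}{2(\alpha-1)}\psi_\theta$ with $\psi_\theta$ supported on $(-2\theta,2\theta)$ and containing the explicit term $-2^\alpha|x-\theta|^{\alpha-1}$; it then verifies $f_\theta(\theta)-f_\theta(x)\asymp|x-\theta|^{\alpha-1}$ uniformly on $[-1,1]$ and bounds $\int(\sqrt{f_\theta}-\sqrt{f_0})^2\,dx\lesssim\theta^{2\alpha-1}$ via $(f_\theta/f_0-1)^2 f_0$. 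A secondary issue: your translate argument at the endpoint $\alpha=3/2$ picks up a logarithm in the far-region integral $\int_{c\rho_T}^{1}|u|^{2\alpha-4}\,du\asymp\log(1/\rho_T)$, so $TH^2(g_0,g_1)\asymp\log T\to\infty$; that boundary case must also be handled by perturbation rather than translation.
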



\begin{rmk}\label{rmk:ds-extra-bp}
Since the cumulant measure of order $(p,q)$ of $N$ vanishes if $p\wedge q>1$, condition \eqref{ds-extra} holds whenever $\varpi=1$, and also holds for $\varpi<2$ when $g$ is bounded. 
Therefore, $T^{-1/\beta_\alpha}$ is also a minimax lower bound for the convergence rate of any estimator for $\theta^*$ under the assumptions of \cref{thm:ds} when $\alpha<2$.
\end{rmk}


\section{Simulation study}\label{sec:simulation}
Bivariate Hawkes and Neyman--Scott processes equipped with gamma kernels are used to model the relationship between two series of timestamps in high-frequency financial data; for instance, see \cite{potiron2025mutually} and \cite{shiotani2024statistical}, respectively.
In our experiments, we employ ``lagged'' variants of these models as the data-generating processes.
Using simulated data, we numerically investigate the accuracy, convergence rate, and tuning parameter sensitivity of the DS estimator and our proposed estimator.
We adopt the triangular kernel $K_{\mathrm{tri}}$ for the kernel method in all numerical experiments in this paper.

\subsection{Models}\label{sec:sim-models}
\subsubsection{Lagged bivariate Hawkes process with gamma kernels}

First, we introduce a bivariate Hawkes process with conditional intensity functions
\[
  \lambda_i(t)
  = \mu_i
    + \sum_{t_{k,1} < t} \phi_{i1}(t - t_{k,1})
    + \sum_{t_{k,2} < t} \phi_{i2}(t - t_{k,2}),
  \qquad i = 1,2,
\]
where $\{t_{k,i}\}$ denotes the $k$-th event time in the $i$-th component.
We parameterize the kernel functions as
$\phi_{ij}(t) = \alpha_{ij} h_{ij}(t)$ for $t > 0$, where $\alpha_{ij} > 0$
is the branching ratio and $h_{ij}(t)$ is a probability density function
on $(0,\infty)$.

In this study, we adopt gamma kernels. Specifically, we assume that $h_{ij}$ follows a gamma density $\Gamma(D_{ij}, \beta_{ij})$:
\[
  h_{ij}(t)
  = \frac{\beta_{ij}^{D_{ij}}}{\Gamma(D_{ij})}
    t^{D_{ij}-1} e^{-\beta_{ij} t}, \qquad t>0,
\]
where $D_{ij} > 0$ is the shape parameter and $\beta_{ij} > 0$ is the rate
parameter. When $D_{ij} = 1$, this specification reduces to the classical
exponential kernel, so the exponential Hawkes model is included as a special case.
We write $\boldsymbol{\mu} = (\mu_1, \mu_2)^{\top}$ for the baseline
intensity vector, and
$\boldsymbol{\alpha} = (\alpha_{ij})_{1 \le i,j \le 2}$,
$\boldsymbol{\beta} = (\beta_{ij})_{1 \le i,j \le 2}$,
and $\boldsymbol{D} = (D_{ij})_{1 \le i,j \le 2}$
for the matrices of branching ratios, rate parameters, and shape parameters,
respectively. We then collect all kernel and baseline parameters into
\[
  \eta = (\boldsymbol{\mu}, \boldsymbol{\alpha}, \boldsymbol{\beta}, \boldsymbol{D})\in (0, \infty)^2\times (0, \infty)^{2\times 2}\times (0, \infty)^{2\times 2}\times (0, \infty)^{2\times 2}.
\]
We assume the spectral radius of $\boldsymbol{\alpha}$ is smaller than 1 to ensure stationarity.
Following \citet{bacry2012non}, the intensity is
\[
  \Lambda
  = (\lambda_1(\eta),\lambda_2(\eta))^{\top}
  = (I_2 - \boldsymbol{\alpha})^{-1}\boldsymbol{\mu}
\]
(Eq.(3) in \cite{bacry2012non}), and the CPCF is
\[
  g(u;\eta,\theta)
  = 1
    + \frac{\nu_{12}(u;\eta)}
           {\lambda_1(\eta)\,\lambda_2(\eta)},
  \qquad u\in\mathbb{R},
\]
where $\nu_{12}$ is the (1, 2) component of the infinitesimal covariance matrix $\nu$ of the bivariate Hawkes process (Eq.(8) in \cite{bacry2012non}):
\begin{equation}\label{eq:hawkes-cov-density}
  \nu_{12}(u)
  = \Bigl(
  \Psi(u)\Sigma + \Sigma \Psi^{\top}(-u) + \widetilde{\Psi}*\Sigma \Psi^{\top} (u)
  \Bigr)_{1, 2}
  \qquad u\in\mathbb{R},
\end{equation}
where $\Sigma=\mathrm{diag}\{\lambda_1,\lambda_2\}$,
$\Psi(u) = (\Psi_{ij}(u))_{1\le i,j\le2}$ is defined by
\[
  \Psi(u)
  = \sum_{m=1}^\infty \Phi^{(*m)}(u),\qquad
  \Phi^{(*m)} = \underbrace{\Phi * \cdots * \Phi}_{m\ \text{times}},
\]
with kernel matrix
\(
  \Phi(u) = \bigl(\phi_{ij}(u)\bigr)_{1\le i,j\le2}
\)
and $*$ denoting the matrix convolution, 
and $\widetilde{\Psi}(u) = \Psi(-u), u\in\mathbb{R}$. 

In addition,
let $\theta\in \mathbb{R}$ denote the lead-lag parameter.
Given a realization of the bivariate Hawkes process $N = (N_1^0, N_2^0)$, we call the shifted process $(N_1, N_2) = (N_1^0,\, N_2^0(\cdot-\theta))$ a \emph{lagged bivariate Hawkes process with gamma kernels (LBHPG)}.
Its distribution is denoted by
\[
  \mathrm{LBHPG}(\eta, \theta).
\]
For the shifted process $\mathrm{LBHPG}(\eta, \theta)$, the intensity is still $\Lambda$, while the cross–covariance density is simply shifted:
\[
  \nu_{12}(u-\theta;\eta),\qquad u\in\mathbb{R}.
\]
Also, we have the cross-pair correlation function (CPCF) of $\mathrm{LBHPG}(\eta,\theta)$ as
\begin{equation}\label{eq:cpcf_LBHPG}
  g(u;\eta,\theta)
  = 1
    + \frac{\nu_{12}(u-\theta;\eta)}
           {\lambda_1(\eta)\,\lambda_2(\eta)},
  \qquad u\in\mathbb{R}.
\end{equation}

In the simulation study, we restrict to the case of common rate parameters, that is, $\beta_{ij}\equiv\beta$ for all $i,j\in\{1,2\}$. In such cases,  
$\mathrm{LBHPG}(\eta,\theta)$ satisfies \ref{ass:cpcf}(ii) with $\alpha = \min\{D_{12}, D_{21}\}$ if $\min\{D_{12},  D_{21}\} < 1$ (diverging gamma kernel(s)) and \ref{ass:cpcf}(i) with  $\alpha=2$ if $D_{11} = D_{21}=D_{12}=D_{22} = 1$ (exponential kernels).
The former can be obtained by the reproducibility of gamma densities and the local behavior of bilateral gamma densities at the origin \cite[Thm.~6.1]{kuchler2008shapes}. For details, see Appendix~\ref{app:lb_hawkes_gamma_a2}. 
The latter follows from the explicit formula for the CPCF for bivariate Hawkes processes with exponential kernels, which can be obtained by \cite[Example 3]{bacry2015hawkes}.
The moment condition \ref{ass:pp}(i) is guaranteed thanks to  Theorem 1 in \cite{leblanc2024exponential}.
We also have a bound on the strong mixing rate \ref{ass:pp}(ii) from Theorem 3.1 in \cite{boly2023mixing}, since the gamma kernels decay geometrically.
Therefore, $\mathrm{LBHPG}(\eta,\theta)$ satisfies the assumptions in Theorem \ref{thm:cpcf} with a (smoothing) kernel satisfying Assumption [K] in such special cases.

\subsubsection{Lagged bivariate Neyman-Scott process with gamma kernels}

First, we recall the construction of a bivariate Neyman-Scott process on $\mathbb{R}$
following Section 5.1 in \citet{shiotani2024statistical}. Let $\mathcal{C}$ be a homogeneous
Poisson (parent) process on $\mathbb{R}$ with intensity $\lambda>0$. For $i=1,2$ and each
$c\in \mathcal{C}$, let $M_i(c)$ be the number of offspring in the component $i$ from the parent $c$.
We assume that $\{M_i(c)\}_{c\in \mathcal{C}}$ are i.i.d.\ copies of a $\mathbb{Z}_{\ge0}$-valued
random variable $M_i$ with finite mean
\(
  \sigma_i = \mathbb{E}[M_i]\in(0,\infty), i=1,2.
\)
For simplicity, we assume that $M_i$ follows a Poisson distribution $\mathrm{Poi}(\sigma_i), i=1, 2$.
Conditional on $M_i(c)$, the temporal offsets $\{d_i(c,m)\}_{m=1}^{M_i(c)}$ of the
offspring are i.i.d.\ with density $f_i$, independent over $i,c,m$ and independent of
$\mathcal{C}$ and $\{M_i(c)\}$. 
Then, let
\[
  N_i^0
  = \sum_{c\in \mathcal{C}}\sum_{m=1}^{M_i(c)}\delta_{c+d_i(c,m)},
  \qquad i=1,2.
\]
We call $N^0=(N_1^0,N_2^0)$ a bivariate Neyman-Scott process.

$N^0$ is stationary with intensities
\[
  \lambda_i(\xi) = \lambda\,\sigma_i,\qquad i=1,2.
\]
Moreover, the cross–intensity
of $(N_1^0,N_2^0)$ is
\[
  \lambda_{12}(u;\xi)
  = \lambda^2\sigma_1\sigma_2
    + \lambda\sigma_1\sigma_2 \int_{\mathbb{R}} f_1(s;\tau_1)f_2(u+s;\tau_2)\,ds,
  \qquad u\in\mathbb{R},
\]
see Eq.(5.2) and the subsequent calculation in \cite{shiotani2024statistical}.
Therefore, the cross–pair correlation function (CPCF) of the bivariate
Neyman-Scott process is
\[
  g(u;\xi)
  = \frac{ \lambda_{12}(u;\xi)}{\lambda_1(\xi)\lambda_2(\xi)}
  = 1 + \frac{1}{\lambda}\int_{\mathbb{R}} f_1(s;\tau_1)f_2(u+s;\tau_2)\,ds,
  \qquad u\in\mathbb{R},
\]
where $\xi$ collects all parameters introduced above.

In this simulation study, we adopt gamma dispersal kernels as in the NBNSP-G model
of Shiotani and Yoshida \cite{shiotani2024statistical}. Specifically, for $i=1,2$ we assume that
\[
  f_i(u;\tau_i)
  = \frac{l_i^{\alpha_i}}{\Gamma(\alpha_i)}u^{\alpha_i-1}e^{-l_i u}1_{(0,\infty)}(u),
  \qquad u\in\mathbb{R},
\]
where $\tau_i=(\alpha_i,l_i)$, $\alpha_i>0$ is the shape parameter and $l_i>0$ is the
rate parameter of the gamma law. We then collect the parameters as 
\[
  \xi = (\lambda,\sigma_1, \sigma_2, \alpha_1, \alpha_2, l_1, l_2)\in (0, \infty)^7.
\]
Let $\theta\in\mathbb{R}$ denote the lead–lag parameter. Given a realization
$N^0=(N_1^0,N_2^0)$ of the bivariate Neyman-Scott process with gamma kernels described above, we define the lagged process by shifting the second component:
\[
  (N_1,N_2) = \bigl(N_1^0,\,N_2^0(\,\cdot\,-\theta)\bigr).
\]
We refer to $N=(N_1,N_2)$ as the \emph{lagged bivariate Neyman-Scott process with gamma kernels (LBNSPG)} and denote its distribution by
\[
  \mathrm{LBNSPG}(\xi,\theta).
\]
The intensities of $\mathrm{LBNSPG}(\xi,\theta)$ are still given by
$\lambda_1(\xi)$ and $\lambda_2(\xi)$, while the cross–intensity is simply shifted:
\[
  \lambda_{12}(u-\theta;\xi),\qquad u\in\mathbb{R}.
\]
Consequently, the CPCF of $\mathrm{LBNSPG}(\xi,\theta)$ is
\begin{gather*}
    g(u;\xi,\theta)
  = \frac{\lambda_{12}(u-\theta;\xi)}{\lambda_1(\xi)\lambda_2(\xi)}
  = 1 + \frac{1}{\lambda}p(u - \theta; \tau_1, \tau_2), \\
  p(u; \tau_1, \tau_2) = \int_{\mathbb{R}}f_1(s;\tau_1)f_2(u+s;\tau_2)\,ds,
  \qquad u\in\mathbb{R}.
\end{gather*}

The function $p(\cdot;\alpha_1,\alpha_2,l_1,l_2)$ is the density of a bilateral gamma distribution.
Küchler and Tappe \cite{kuchler2008shapes} provide a detailed analysis of the shapes of bilateral gamma densities, including unimodality and the local behavior near zero (see Theorem 6.1 therein).
Combining their results with the above representation shows that $g(\cdot;\xi,\theta)$ is strictly unimodal with the peak at $\theta$  whenever the parameters are symmetric, i.e., $\alpha_1=\alpha_2$ and $l_1=l_2$.
We restrict attention to such symmetric cases in our simulation experiments.
Under this restriction, $\mathrm{LBNSPG}(\xi,\theta)$ satisfies \ref{ass:cpcf}(ii) if $\alpha_1 + \alpha_2 < 1$ and \ref{ass:cpcf}(i) if $\alpha_1 + \alpha_2 > 1$, with $\alpha=\min\{\alpha_1+\alpha_2, 3\}$ in both cases.

Note that $\mathrm{LBNSPG}(\xi,0)$ is the special case (setting the noise process to zero and the distribution of $M_i$ to Poisson) of the NBNSP-G model introduced in Section 6.1 in \cite{shiotani2024statistical}.
Therefore, $\mathrm{LBNSPG}(\xi,0)$ satisfies condition [NS] in \cite{shiotani2024statistical}, so that Lemma~10.13 there bounds the $\alpha$–mixing coefficients of $\mathrm{LBNSPG}(\xi,\theta)$ in terms of the tail probabilities of the dispersal kernels:
for all $c_1\ge0$ and $m\ge2r+2$,
\[
  \alpha^N_{c_1,\infty}(m;r):=\sup_{c_2\geq0}\alpha^N_{c_1,c_2}(m;r)
  \le8m\lambda \;
  (m+1+2r)\sum_{i=1}^2\sigma_i\int_{|z|\ge m/2-2r} f_i(z;\tau_i)\,dz.
\]
Since the gamma kernels have geometrically decaying tails, this implies that
$\alpha_{c_1,\infty}(m;r_1)$ decreases faster than any power of $m$, so Assumption~\ref{ass:pp}(ii) holds for $\mathrm{LBNSPG}(\xi,\theta)$.
Moreover, since the Poisson-distributed $M_i$ possesses moments of all orders, Lemma~10.14 in \cite{shiotani2024statistical} yields finiteness of moments of $N_i((0,1])$  of all orders, so that Assumption~\ref{ass:pp}(i) is satisfied.
Therefore, $\mathrm{LBNSPG}(\xi,\theta)$ fulfills Assumptions
\ref{ass:cpcf} and \ref{ass:pp} and thus is in the scope of Theorem~\ref{thm:cpcf} if  $\alpha_1=\alpha_2$,  $l_1=l_2$, and $\alpha_1+\alpha_2\neq 1$.

\subsubsection{Model and parameter specifications in the simulation studies}
We consider two types of bivariate stationary point process models, each with three sets of parameter values.
The models are labeled as
\texttt{hawkes\_gamma\_sym}, 
\texttt{hawkes\_gamma\_asym}, \texttt{hawkes\_exp}, \texttt{ns\_gamma\_1}, \texttt{ns\_gamma\_2}, and \texttt{ns\_gamma\_3}.

\begin{table}
    \centering
    \begin{tabular}{ccccc}\toprule
         Name&Family&  $\alpha$&  $\beta_{\alpha}$ &The convergence rate $T^{-1/\beta_{\alpha}}$\\\midrule
         \texttt{hawkes\_gamma\_sym} &LBHPG&  0.4&  0.4 &$T^{-5/2}$ \\
 \texttt{hawkes\_gamma\_asym} &LBHPG& 0.4& 0.4 &$T^{-5/2}$ \\
         \texttt{hawkes\_exp} &LBHPG&  2&  3 &$T^{-1/2}$ \\
         \texttt{ns\_gamma\_1} &LBNSPG&  0.8&  0.8 &$T^{-5/4}$ \\
         \texttt{ns\_gamma\_2} &LBNSPG&  1.6&  2.2 &$T^{-5/11}$ \\ 
 \texttt{ns\_gamma\_3}& LBNSPG& 3&5 &$T^{-1/5}$ \\ \bottomrule
    \end{tabular}
    \caption{Values of $\alpha$ (the ``sharpness'' parameter of the CPCF $g$ in \ref{ass:cpcf}; smaller values imply sharper functions), $\beta_\alpha=\alpha\vee(2\alpha-1)$, and the minimax lower bound on the convergence rate $T^{-1/\beta_\alpha}$ in Theorem \ref{thm:minimax} for each model.}
    \label{tab:models}
\end{table}

All LBHPG models share
\[
  \boldsymbol{\mu} = (0.2, 0.2)^{\top},\qquad
  \boldsymbol{\alpha} =
  \begin{pmatrix}
    0.1 & 0.1 \\
    0.1 & 0.1
  \end{pmatrix},\qquad
  \boldsymbol{\beta} =
  \begin{pmatrix}
    10 & 10 \\
    10 & 10
  \end{pmatrix}.
\]
We use three specifications for the shape matrix $\boldsymbol{D}$ to cover different degrees of regularity and asymmetry:
\begin{align*}
  \texttt{hawkes\_gamma\_sym}:\quad
  & \boldsymbol{D} =
  \begin{pmatrix}
    0.4 & 0.4 \\
    0.4 & 0.4
  \end{pmatrix}; \\
  \texttt{hawkes\_gamma\_asym}:\quad
  & \boldsymbol{D} =
  \begin{pmatrix}
    0.4 & 0.4 \\
    0.8 & 0.4
  \end{pmatrix}; \\
  \texttt{hawkes\_exp}:\quad
  & \boldsymbol{D} =
  \begin{pmatrix}
    1 & 1 \\
    1 & 1
  \end{pmatrix}.
\end{align*}

For the LBNSPG models, we fix
\[
  \lambda = 0.1,\qquad
  \sigma_1 = \sigma_2 = 4
\]
and use symmetric dispersal shape and rate parameters.
We select three settings to vary the smoothness of the CPCF around the peak:
\begin{align*}
  \texttt{ns\_gamma\_1}:&\quad (\alpha_1, \alpha_2) = (0.4, 0.4), \quad (l_1, l_2) = (10, 10), \\
  \texttt{ns\_gamma\_2}:&\quad (\alpha_1, \alpha_2) = (0.8, 0.8), \quad (l_1, l_2) = (10, 10), \\
   \texttt{ns\_gamma\_3}:&\quad (\alpha_1, \alpha_2) = (2.0, 2.0), \quad (l_1, l_2) = (100, 100). \\
\end{align*}

\subsection{Accuracy}
In this experiment, we compare the accuracy of the lead-lag time estimators using the root mean squared error (RMSE) across the six scenarios.

For each pair of observation time interval length  \(T \in \{1000, 2000, 4000, 8000\}\) and the estimator,  we generate \(5000\) Monte Carlo replicates of sample paths.
In every replicate, the true lead-lag time is drawn as \(\theta^* \sim \mathcal{U}(-0.1, 0.1)\). 
For a given estimator \(\hat\theta\), we report
\[
  \mathrm{RMSE}
  =
  \left(\frac{1}{5000}\sum_{b=1}^{5000}\bigl(\hat\theta^{(b)}-\theta^{*(b)}\bigr)^2\right)^{1/2},
\]
where \(\hat\theta^{(b)}\) and \(\theta^{*(b)}\) denote the estimate and the true lead-lag time in replicate \(b\).
Randomizing \(\theta^*\) across replicates summarizes performance averaged over a range of lead-lag values rather than at a single fixed \(\theta^*\).
The lead-lag time parameter is supposed to be in $(-1, 1)$, i.e., we set $r=1$.
The bandwidth grid is \(\{10^{-1}, 10^{-2}, 10^{-3}, 10^{-4}, 10^{-5}, 10^{-6}\}\) for both the Lepski method and DS estimators.
For the Lepski method, we set $A_T = \log\log T$.
When the contrast function has multiple maximizers on $(-r, r)$, we select the minimum as a deterministic tie-breaking rule.
\begin{figure}[t]
    \centering
    \includegraphics[width=1\linewidth]{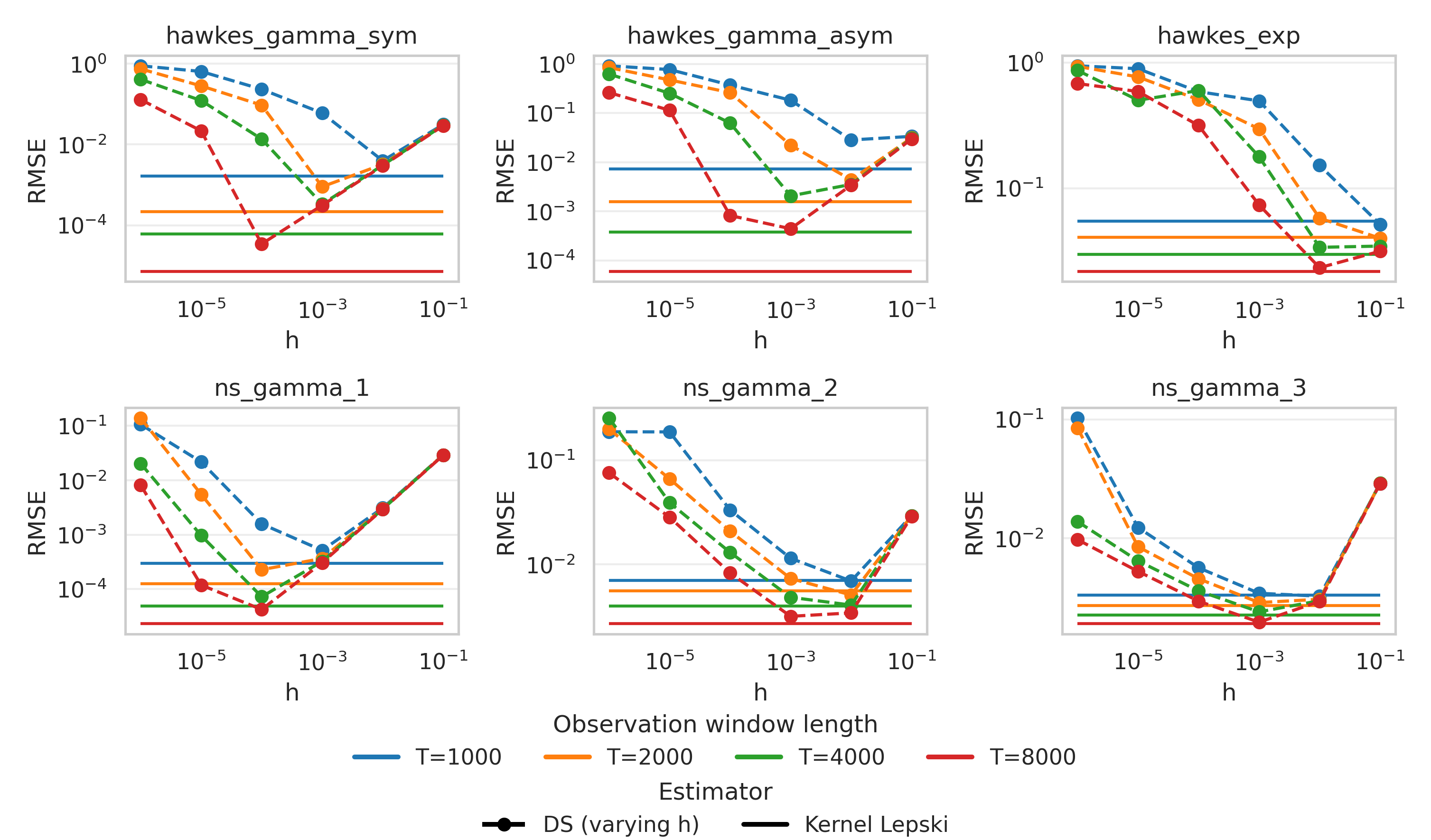}
    \caption{RMSE of the estimators for the lead-lag time across six scenarios.
Each panel uses log--log axes with bandwidth $h$ on the x-axis and RMSE on the y-axis.
Dashed lines with markers trace the DS estimator over the bandwidth grid, while solid horizontal lines show the kernel estimator with the Lepski-selected bandwidth.
Colors indicate the observation-window length $T \in \{1000, 2000, 4000, 8000\}$ and are shared across panels.}
    \label{fig:sim-accuracy}
\end{figure}

Figure \ref{fig:sim-accuracy} illustrates two systematic patterns.
First, the performance of the DS estimator is sensitive to the bucket width $h$. For each $T$, the RMSE curves as a function of $h$ are typically U-shaped: small buckets lead to high variance due to the scarcity of joint activations, whereas larger buckets introduce a discretization bias because the lead-lag time is constrained to a coarse grid. Moreover, the value of $h$ that minimizes the RMSE depends on both the underlying model and the observation window length $T$.
This confirms that, in practice, the DS estimator requires model-specific tuning of $h$.

Second, the kernel estimator with Lepski’s bandwidth selection achieves lower RMSE than the DS estimator for almost all combinations of $T$ and data-generating process.
In Figure \ref{fig:sim-accuracy}, the horizontal solid lines corresponding to the Lepski estimator lie close to (and often below) the best DS RMSE over the grid in each panel. Importantly, this improvement is obtained without any manual tuning of the bandwidth: once the grid $\mathcal{H_T}$ and the slowly diverging constant $A_T=\log\log T$ are fixed, the procedure automatically adapts the smoothing level to the data. This demonstrates the main practical advantage of the proposed approach over the DS method.



\subsection{Convergence rate and dependence on hyperparameters of Lepski's method}\label{sec:sim-lepski}
In this experiment, we investigate the convergence rate of the Lepski estimator, as shown in Theorem~\ref{thm:adaptive}.
We also examine how the estimator behaves as the tuning parameter $A_T$ varies.

As in the previous experiment, for each observation-window length $T \in \{1000, 2000, 4000, 8000\}$ and each estimator, we generate 5000 Monte Carlo replicates of sample paths.
In every replicate, the true lead-lag time is drawn as \(\theta^* \sim \mathcal{U}(-0.1, 0.1)\). 
The lead-lag time parameter is supposed to be in $(-1, 1)$, i.e., we set $r=1$.
The bandwidth grid is \(\{10^{-1}, 10^{-2}, 10^{-3}, 10^{-4}, 10^{-5}, 10^{-6}\}\).

\begin{figure}[t]
  \centering
  \includegraphics[width=\textwidth]{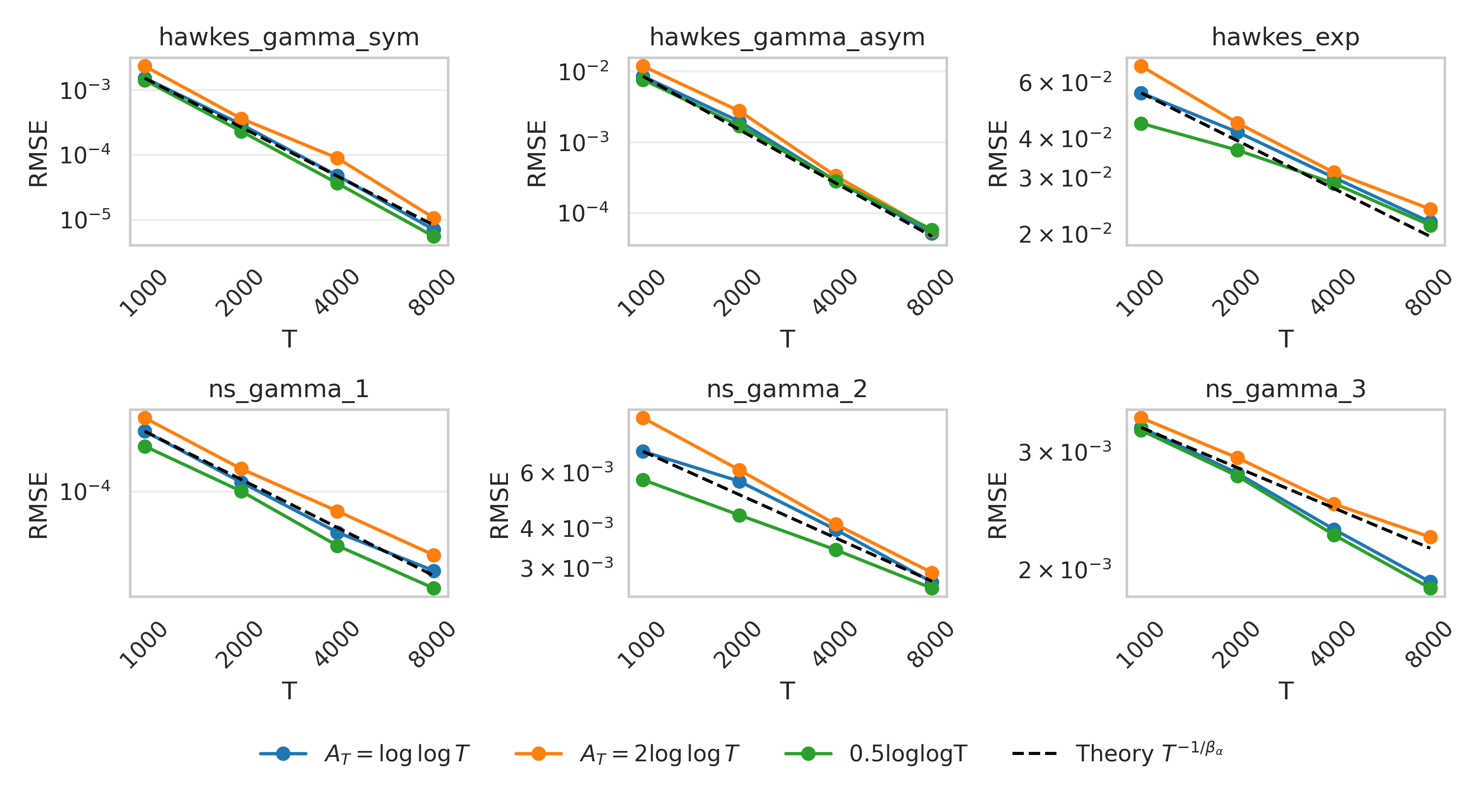}
  \caption{RMSE versus observation window $T$ (log--log scale) across all simulation settings and $A_T$ schedules. Theoretical $T^{-1/\beta_\alpha}$ slopes are shown as dashed lines.}
  \label{fig:sensitivity_lepski_all_scenarios}
\end{figure}

Figure \ref{fig:sensitivity_lepski_all_scenarios} investigates the convergence rate of the Lepski estimator. In each scenario, the RMSE is plotted against \(T\) on a log--log scale together with the theoretical slope $T^{-1/\beta_{\alpha}}$ derived from the minimax lower bound in Theorem \ref{thm:minimax}.
Across all six models, the empirical curves for the Lepski estimator are nearly parallel to the guideline.
Thus, the proposed estimator may attain the optimal convergence rate given by our theory.

The figure also compares three schedules for the tuning parameter $A_T$: $0.5\log\log T$, $\log\log T$, and $2\log\log T$.
Across all six models, the three RMSE curves are close and exhibit similar slopes, indicating that the convergence rate is robust to the choice of $A_T$ within this range.
The differences across schedules are mostly level shifts, so the precise constant in front of $\log\log T$ is not critical for achieving the minimax rate.


\section{Empirical illustration}\label{sec:empirical}




An outstanding feature of the method depicted in \citet{dobrev2017high} is that the DS estimator can effectively capture the fastest speed of information transmission between two geographically separated markets, which is typically close to the speed of light. 
Specifically, they analyzed the lead-lag relationships between the cash and futures markets for the 10-Year U.S. Treasury Note and for the S\&P 500 index. 
The DS estimator stably detected sharp peaks of the cross-market activity measures at 5 milliseconds, which is consistent with the optical propagation time between the futures exchange (in Aurora, Illinois) and the cash market platform (in Secaucus, New Jersey). 
In this section, we investigate whether this finding continues to hold even for geographically closer markets, for which sub-millisecond estimates are required to detect such relationships. 


More precisely, we examine the lead-lag relationships between the quotes of a single stock on two different exchanges: the NASDAQ (located in Carteret, New Jersey) and BATS (located in Secaucus, New Jersey). 
According to \cite[Table 2]{Tiv2020}, the optical propagation time between the NASDAQ and BATS exchanges is approximately 0.1 milliseconds. 
We obtain the timestamps of updates of the best quotes on each exchange in August 2015 from the Daily TAQ dataset, which provides every quote reported to the consolidated tape by all Consolidated Trade Association (CTA) and Unlisted Trading Privileges (UTP) participants. 
According to \cite{BM2019}, the microsecond timestamps were fully implemented in this dataset on August 6, 2015. 
For this reason, we focus on the sample period beginning on August 6, 2015, comprising 18 trading days. 
Our analysis covers the component stocks of the NASDAQ-100 in 2015, totally 108 stocks. 
We restrict attention to transactions occurring between 9:45 and 15:45, discarding the first and last 15 minutes of the trading day in order to avoid non-stationarities commonly observed at the open and close.

In the Daily TAQ data, each quote record contains two timestamps: \texttt{Time} and \texttt{Participant Timestamp}, which refer to the timestamps published by Securities Information Processors (SIPs) and exchange matching engines, respectively. 
Following \cite{BM2019}, we refer to the former as the \emph{SIP timestamp} and the latter as the \emph{participant timestamp}. 
See \cite[Section 2]{BM2019} and \cite[Section 4]{hasbrouck2021price} for the institutional background. 
In this study, we use the participant timestamp because our preliminary analysis suggests that the SIP timestamp is heavily contaminated by \emph{reporting latencies} in the terminology of \cite{BM2019}. 
Specifically, even when multiple market events occur simultaneously and share the same participant timestamp, they often receive different SIP timestamps, causing artificial misalignment across market events \cite{schwenkparticipant,wu2025latency,tivnan2018price}. 
Moreover, reporting latencies fluctuate dynamically due to various latency sources, and their distribution is heavy-tailed (cf.~\cite{holden2023blink,wu2025latency} and \cite[Appendix]{BM2019}), making it difficult to disentangle their effects from genuine lead–lag behavior. 
For these reasons, we rely on participant timestamps. 
Developing a lead–lag estimator that is robust to such timestamp contamination would be an interesting direction for future research. 

For each trading day and for each stock, we compute both our kernel estimator $\hat\theta_{h}$ and the DS estimator $\hat\theta^{DS}_h$. 
For the kernel estimator, we use the triangular kernel and select the bandwidth via the Lepski type method proposed in \cref{sec:lepski}, with $\mcl H_T=\{1~\mu\text{s},10~\mu\text{s},100~\mu\text{s},1000~\mu\text{s}\}$, $A_T=\log\log T$ and $T=21600$ (the number of seconds in 6 trading hours). 
For the DS estimator, we consider two bucket sizes: $h=1~\mu\text{s}$ and $h=100~\mu\text{s}$. 
The former corresponds to the minimum time unit, as suggested in \cite{dobrev2017high}, whereas the latter is motivated by the physical transmission time of approximately 100 $\mu$s between the two exchanges (cf.~\cite[Table 2]{Tiv2020}). 
We set $r=10~\text{ms}$ for the search range for lead-lag parameters. 

\cref{fig:violin} presents violin plots of the lead-lag time estimates for the three estimators. 
The estimates of $\hat\theta_{\hat h}$ cluster around several values, most notably around 95 $\mu$s and 130 $\mu$s. 
These values have clear physical interpretations: According to \cite[Table 2]{Tiv2020}, the transmission time between NASDAQ and BATS is approximately 95 $\mu$s via hybrid laser link and 128 $\mu$s via fiber optic cable. 
This also suggests that, in our sample, NASDAQ generally leads BATS in updating best quotes, which is an intuitive result given that all the stocks analyzed are listed on NASDAQ, and NASDAQ tends to have greater market participation. 

We observe a similar clustering pattern for the DS estimator with $h=1~\mu\text{s}$, although these estimates appear to cluster slightly more around 130 $\mu$s than 95 $\mu$s. 
However, they also exhibit a few negative ``outliers'', an issue not present in the kernel estimator. 
Using a larger bucket size $h=100~\mu\text{s}$ eliminates such outliers, but the coarse discretization inherent in the DS estimator significantly distorts the estimated lead–lag parameters.

To highlight the differences between our estimator and the DS estimator with $h=1~\mu\text{s}$, \cref{fig:scatter} presents a scatter plot of the two sets of estimates, color-coded by $\sqrt{n_1n_2}$, the geometric mean of the sample sizes. 
The two estimators yield similar values in many cases, but their estimates diverge as one or both of $n_1$ and $n_2$ become small. This observation aligns with our theoretical finding that the bucket size in the DS estimator should be increased when the sample size decreases (cf.~the discussion following \cref{thm:ds}).

\begin{figure}[ht]
    \centering
    \includegraphics[scale=0.9]{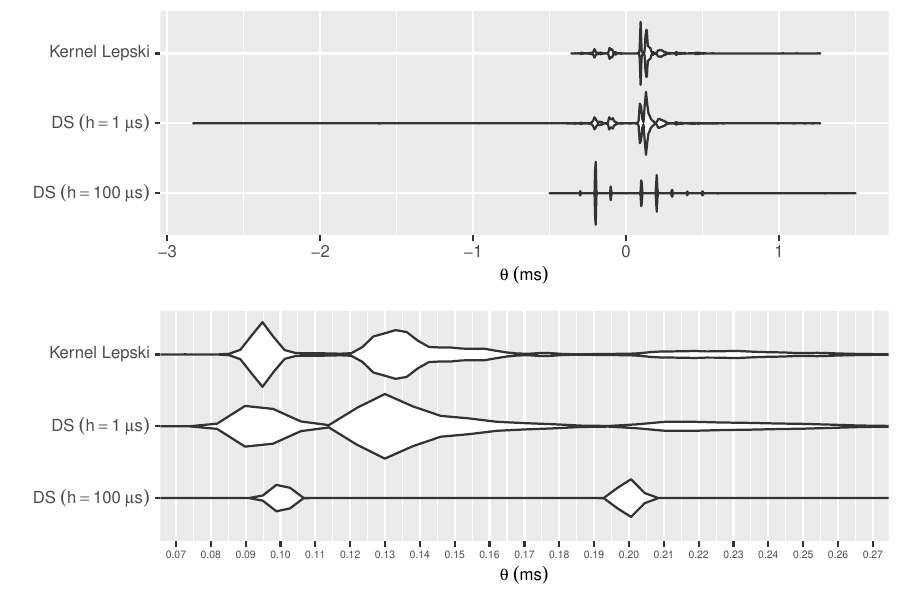}
    \caption{\small
    Violin plots of the daily lead-lag time estimates between quotes on the NASDAQ and BATS exchanges, computed for all the component stocks of the NASDAQ-100 in August 6--31, 2015. 
    The top panel shows the entire plots, while the bottom panel zooms in on the interval from 0.07 ms to 0.27 ms. 
    The smoothing bandwidths for the violin plots are selected by \citet{sheather1991reliable}'s method implemented as the R function \texttt{bw.SJ()}. 
    The horizontal axis is expressed in milliseconds. A positive estimate implies that the NASDAQ leads the BATS and vice versa.
    }
    \label{fig:violin}
\end{figure}

\begin{figure}[ht]
    \centering
    \includegraphics[scale=0.8]{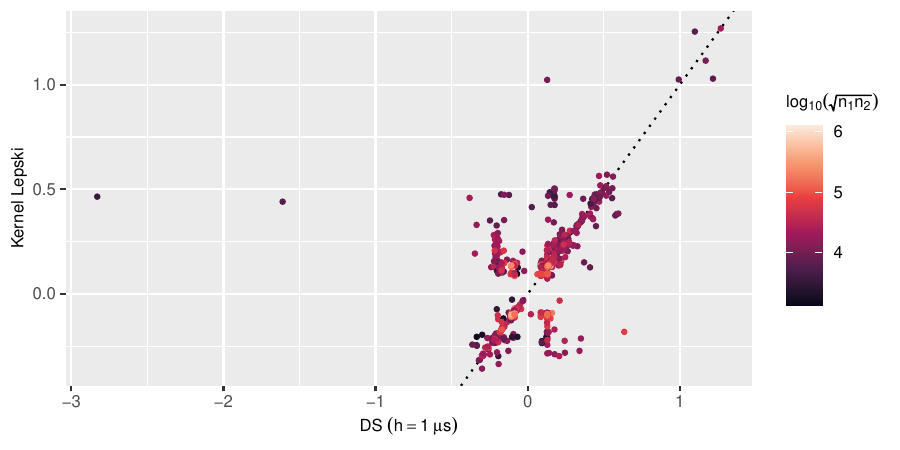}
     \caption{\small
     Scatter plot of the estimates of $\hat\theta^{DS}_h$ with $h=1~\mu\text{s}$ versus $\hat\theta_{\hat h}$, color-coded by the geometric mean of the sample sizes. 
     The values are expressed in milliseconds. 
     The dotted line is the 45-degree line.
    }
    \label{fig:scatter}
\end{figure}




\section{Concluding remarks}\label{sec:conclusion}

In this paper, we have established a theoretical foundation for timestamp-based lead-lag analysis from a point process perspective. 
Within this framework, the method of \citet{dobrev2017high} for analyzing lead-lag relationships can be interpreted as an estimator of the cross-pair correlation function (CPCF) of the bivariate point process generated by two timestamp series. 
Accordingly, the prevailing lead-lag time is naturally defined as the location of the sharpest peak of this CPCF. 
We have proposed estimating this lead-lag time by maximizing a kernel density estimator of the CPCF. 
Theoretically, our estimator nearly attains the optimal convergence rate for estimating the lead-lag time, provided that the bandwidth of the kernel estimator is chosen appropriately. 
To this end, we have introduced a Lepski type bandwidth selection method. 
In practice, our procedure addresses several shortcomings of the Dobrev--Schaumburg estimator that arise from its discrete nature. 
We have demonstrated the superior performance of our estimator through comprehensive simulation studies and an illustrative empirical analysis. 

Finally, we discuss several directions for future research to extend the applicability of our framework.
First, extending the proposed method to non-stationary settings is of practical interest. 
Since financial market activity typically varies over time, the stationarity assumption imposed in this study may be restrictive for certain empirical applications. A promising avenue is to adopt the concept of transition-invariant cross-pair correlation function (see, e.g., \cite{shaw2021globally}), which allows time-varying intensities while preserving a stable lead-lag structure that depends only on the time lag. Developing similar methods for such inhomogeneous settings would allow more accurate estimation of the lead-lag time.
Second, in practical applications, timestamp series may be contaminated by observation noise. 
In such cases, a noise-robust estimator of the lead-lag parameter is required. 
This problem is closely related to deconvolution, which has been studied by \cite{cucala2008intensity} in the context of intensity function estimation for point processes and by \cite{meister2011general} in the context of mode estimation for i.i.d.~data. 
Third, empirical CPCF estimates for high-frequency timestamps often exhibit multiple peaks. 
These naturally arise because information transmission between two markets is bidirectional (as documented in \cite{dobrev2017high}) and because transmission speeds differ across market participants. 
While this paper focuses solely on the sharpest peak, identifying all ``significant'' peaks would also be of interest. 
Similar questions have been studied in the classical literature on mode estimation; see \cite[Section 3]{chacon2020modal} and references therein. 
Fourth, it is also worth investigating how the lead-lag parameter is affected by order types, size, and market conditions such as spread and depth. By treating these attributes as marks, we can analyze the lead-lag relationships between marked point processes. 


\appendix

\section*{Appendix}

\section{Proofs}



Throughout the discussion, we use the following notation and convention. 
For two real numbers $x,y$, we write $x\lesssim y$ if $x\leq Cy$ for some constant $C>0$ depending only on $r,\alpha,\alpha_0,\delta,b,(B_p)_{p\geq1},(B_{p,q})_{p,q\geq1},\eps$ and $\|K\|_\infty$. 
For a real-valued function $f$ defined on $[-r_1,r_1]$, we write $\|f\|_{L^p}=\|f\|_{L^p([-r_1,r_1])}$ for each $p\in[1,\infty]$ for short. 
We set $g_0\equiv1$ whenever \ref{ass:cpcf}\ref{type-I} is assumed. 

\subsection{Preliminaries}

First, we can easily deduce from the definition of the cross-intensity function \eqref{def:cross-intensity} that
\[
\E\sbra{\int_{D\times\mathbb R}\varphi(y-x)N_1(dx)N_2(dy)}
=\leb(D)\lambda_1\lambda_2\int_{\mathbb R}\varphi(u)g(u)du
\]
for any Borel function $\varphi:\mathbb R\to[0,\infty]$ and $D\in\mcl B(\mathbb R)$. 
We refer to this identity as Campbell's formula in the following. 

Next, we prove two auxiliary estimates that play a basic role throughout our discussion. 
The first is a moment inequality for a sum of dependent random variables in terms of the $\alpha$-mixing coefficients \eqref{def:alpha-mixing} and is a simple consequence of \cite[Theorem 2]{doukhan1999new} with a truncation argument.  
\begin{lemma}\label{lemma:mom}
Let $(X_j)_{j=0}^{T-1}$ be a sequence of random variables such that $X_j$ is $\sigma(N\cap(I_j\oplus r_1))$-measurable for all $j$. 
Suppose also that $\max_{0\leq j\leq T-1}\E[X_j^q]<\infty$ for some even integer $q\geq2$. 
Then, there exists a constant $C_q$ depending only on $q$ such that for any $M,\tau>0$,
\bm{
\norm{\sum_{j=0}^{T-1}(X_j-\E[X_j])}_q
\leq C_q\left\{
\bra{T\tau M\max_{0\leq j\leq T-1}\E[|X_j|]+TM^2\sum_{m=\tau}^\infty\alpha^N_{q,q}(m;r_1)}^{1/2}\right.\\
\left.+T^{1/q}M\bra{\sum_{m=0}^\infty(m+1)^{q-2}\alpha^N_{q,q}(m;r_1)}^{1/q}\right\}
+2T\max_{0\leq j\leq T-1}\norm{X_j1_{\{|X_j|>M\}}}_q.
}
\end{lemma}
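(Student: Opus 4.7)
The plan is to reduce the inequality to the standard Rosenthal-type moment bound for strongly mixing bounded sequences, which is the content of Theorem 2 of Doukhan and Louhichi, by a simple truncation. Throughout, measurability of $X_j$ with respect to $\sigma(N\cap(I_j\oplus r_1))$ will be used to translate mixing assumptions on the sequence $(X_j)$ into assumptions on $N$: for any $J_1,J_2\subset\{0,\dots,T-1\}$ with $|J_i|\le q$ and distance at least $m$, the $\alpha$-mixing coefficient between $\sigma(X_j:j\in J_1)$ and $\sigma(X_j:j\in J_2)$ is bounded by $\alpha^N_{q,q}(m;r_1)$.

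First, I would truncate: set $X_j^M=X_j1_{\{|X_j|\le M\}}$ and $R_j=X_j1_{\{|X_j|>M\}}$, so that $X_j-\E[X_j]=(X_j^M-\E[X_j^M])+(R_j-\E[R_j])$. The remainder piece is handled by two applications of the triangle inequality:
\[
\norm{\sum_{j=0}^{T-1}(R_j-\E[R_j])}_q\le 2\sum_{j=0}^{T-1}\|R_j\|_q\le 2T\max_{0\le j\le T-1}\|X_j1_{\{|X_j|>M\}}\|_q,
\]
which produces exactly the last term in the stated bound.

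Next, I would apply Theorem~2 of Doukhan--Louhichi to the bounded, centered sequence $(X_j^M-\E[X_j^M])$, noting that $\|X_j^M\|_\infty\le M$ and that the mixing coefficients inherited from $N$ satisfy the hypothesis of their theorem. The theorem yields a bound of the Rosenthal type
\[
\norm{\sum_{j=0}^{T-1}(X_j^M-\E[X_j^M])}_q\le C_q\Bigl\{V_T^{1/2}+T^{1/q}M\Bigl(\sum_{m=0}^\infty(m+1)^{q-2}\alpha^N_{q,q}(m;r_1)\Bigr)^{1/q}\Bigr\},
\]
where $V_T$ controls the variance $\Var\bigl(\sum_j X_j^M\bigr)\le T\sum_{m\in\mathbb{Z}}|\Cov(X_0^M,X_m^M)|$ in the stationary-like expansion used in that result. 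This directly gives the second term inside the outer braces.

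Finally, the variance-type quantity $V_T$ is split at the level $\tau$: for lags $m<\tau$ I would use the elementary bound $|\Cov(X_0^M,X_m^M)|\le 2M\,\E|X_m|$ coming from $|X_0^M|\le M$, and for $m\ge\tau$ I would use Davydov's covariance inequality $|\Cov(X_0^M,X_m^M)|\le 4M^2\alpha^N_{q,q}(m;r_1)$. Summing over $m$ yields $V_T\lesssim T\bigl(\tau M\max_j\E|X_j|+M^2\sum_{m\ge\tau}\alpha^N_{q,q}(m;r_1)\bigr)$, which, after taking the square root, supplies the first term inside the outer braces and completes the proof. The only real subtlety is matching the precise form of the Doukhan--Louhichi inequality to the mixing coefficients used here; this is purely bookkeeping because the measurability of $X_j$ reduces mixing for the sequence to mixing for the enlarged intervals $I_j\oplus r_1$, which is exactly how $\alpha^N_{q,q}(m;r_1)$ is defined.
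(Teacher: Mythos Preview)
Your approach is essentially identical to the paper's: truncate at level $M$, control the tail by the triangle inequality, and apply Theorem~2 of Doukhan--Louhichi to the bounded piece, splitting the resulting sum at $\tau$. One small point of imprecision: the quantity $V_T$ in the Doukhan--Louhichi bound is built from covariances of \emph{products} $\Cov(Y_{j_1}\cdots Y_{j_k},Y_{j_{k+1}}\cdots Y_{j_p})$, not just pairwise covariances; the paper handles this by taking $\theta_m=\min\{M\max_j\E|X_j|,\,4M^2\alpha^N_{q,q}(m;r_1)\}$ as a simultaneous bound on all such product covariances (using $|Y_j|\le M$ to strip off all but one factor, or the strong-mixing covariance inequality, respectively), but your elementary and Davydov bounds extend to that level in the same way, so this is indeed the bookkeeping you flag at the end.
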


\begin{proof}
Set $Y_j:=X_j1_{\{|X_j|\leq M\}}$ for $j=0,1,\dots,T-1$. 
By the triangle inequality,
\besn{\label{mom-truncate}
\norm{\sum_{j=0}^{T-1}(X_j-\E[X_j])-\sum_{j=0}^{T-1}(Y_j-\E[Y_j])}_q
&\leq\sum_{j=0}^{T-1}\norm{X_j1_{\{|X_j|>M\}}-\E[X_j1_{\{|X_j|>M\}}]}_q\\
&\leq 2T\max_{0\leq j\leq T-1}\norm{X_j1_{\{|X_j|>M\}}}_q.
}
Meanwhile, for any integers $0\leq j_1\leq \cdots\leq j_p\leq T-1$ such that $j_{k+1}-j_k=m$ for some $0\leq k<p$ and $m\geq0$, we have
\bes{
|\Cov[Y_{j_1}\cdots Y_{j_k},Y_{j_{k+1}}\cdots Y_{j_p}]|
\leq \min\cbra{2M^{p-1}\max_{0\leq j\leq T-1}\E[|X_j|],\,4M^p\alpha^N_{p,p}(m;r_1)},
}
where the second upper bound follows by the covariance inequality under strong mixing (see e.g.~Lemma 3 in \cite[Section 1.2]{Doukhan1994}). 
Hence, we can apply \cite[Theorem 2]{doukhan1999new} to $Y_j$ with $M=M,\gamma=C=1$ and
\[
\theta_m=\min\cbra{M\max_{0\leq j\leq T-1}\E[|X_j|],\,4M^2\alpha^N_{q,q}(m;r_1)}
\]
in their notation. 
Hence, there exists a constant $C_q$ depending only on $q$ such that
\ba{
\E\sbra{\abs{\sum_{j=0}^{T-1}(Y_j-\E[Y_j])}^q}
&\leq C_q\cbra{\bra{T\sum_{m=0}^{T-1}\theta_m}^{q/2}
+M^{q-2}T\sum_{m=0}^{T-1}(m+1)^{q-2}\theta_m}\\
&\leq C_q\left\{
\bra{2T\tau M\max_{0\leq j\leq T-1}\E[|X_j|]+4TM^2\sum_{m=\tau}^\infty\alpha^N_{q,q}(m;r_1)}^{q/2}\right.\\
&\left.\qquad\qquad+4TM^q\sum_{m=0}^\infty(m+1)^{q-2}\alpha^N_{q,q}(m;r_1)\right\}.
}
Combining this with \eqref{mom-truncate} gives the desired result. 
\end{proof}

The second is an estimate for the kernel-smoothed CPCF. 
\begin{lemma}\label{fh-bound}
Suppose that $K$ is bounded and supported on $[-1,1]$. 
For $h\in(0,1]$, define a function $f_h:\mathbb R\to[0,\infty)$ as
\ben{\label{def:fh}
f_h(u)=\int_{\mathbb R}K_h(v-u)g(v)dv=\int_{\mathbb R}K(t)g(u+ht)dt,\qquad u\in\mathbb R.
}
Suppose also that there exist constants $\theta^*\in[-r,r]$, $\tilde\alpha\in(0,1]$, $b>1$ and a function $g_0\in L^{\frac{1}{1-\tilde\alpha}}([-r_1,r_1])$ such that
\ben{\label{g-bound}
g(u)\leq b\cbra{g_0(u)+|u-\theta^*|^{\tilde\alpha-1}}\quad\text{for all }u\in[-r_1,r_1].
}
Then we have
\[
f_h(u)\leq 2^{\tilde\alpha}\|K\|_\infty b\bra{\|g_0\|_{L^{1/(1-\tilde\alpha)}}+\frac{2}{\tilde\alpha}}h^{\tilde\alpha-1}
\quad\text{for all $u\in[-r,r]$}.
\]
\end{lemma}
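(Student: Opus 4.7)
The plan is to substitute the pointwise bound \eqref{g-bound} into the second expression for $f_h(u)$ and estimate the two resulting pieces separately. Since $K$ is supported on $[-1,1]$ and $h\le 1$, for every $u\in[-r,r]$ and $t\in[-1,1]$ the point $u+ht$ lies in $[-r_1,r_1]$, so \eqref{g-bound} applies throughout the range of integration. Bounding $K(t)$ by $\|K\|_\infty$ then reduces the task to controlling, uniformly in $u\in[-r,r]$, the integrals
\[
\int_{-1}^{1}g_0(u+ht)\,dt=\frac{1}{h}\int_{u-h}^{u+h}g_0(v)\,dv
\qquad\text{and}\qquad
\int_{-1}^{1}|u+ht-\theta^*|^{\tilde\alpha-1}\,dt=\frac{1}{h}\int_{u-h}^{u+h}|v-\theta^*|^{\tilde\alpha-1}\,dv,
\]
after the substitution $v=u+ht$.

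For the $g_0$ piece I would apply H\"older's inequality on $[u-h,u+h]$ with conjugate exponents $p=1/(1-\tilde\alpha)$ and $q=1/\tilde\alpha$. This directly yields the bound $2^{\tilde\alpha}h^{\tilde\alpha-1}\|g_0\|_{L^{1/(1-\tilde\alpha)}}$; the endpoint $\tilde\alpha=1$ is covered by reading $1/(1-\tilde\alpha)$ as $\infty$, so that $\|g_0\|_{L^\infty}$ appears and the length factor contributes $2h/h=2$.

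For the singular-power piece, the translation $s=v-\theta^*$ turns the integral into $\int_{a}^{a+2h}|s|^{\tilde\alpha-1}\,ds$ for some $a\in\mathbb R$ depending on $u$ and $\theta^*$. If $0\notin[a,a+2h]$, the subadditivity $(|a|+2h)^{\tilde\alpha}-|a|^{\tilde\alpha}\le(2h)^{\tilde\alpha}$, valid for $\tilde\alpha\in(0,1]$, gives the bound $(2h)^{\tilde\alpha}/\tilde\alpha$; if $0\in[a,a+2h]$, writing $a=-c$ with $c\in[0,2h]$ reduces the problem to maximising $c^{\tilde\alpha}+(2h-c)^{\tilde\alpha}$ on $[0,2h]$, whose maximum value $2h^{\tilde\alpha}$ is attained at $c=h$ by concavity. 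Taking the larger of the two cases produces the uniform bound $2h^{\tilde\alpha}/\tilde\alpha$, hence a contribution of $2h^{\tilde\alpha-1}/\tilde\alpha$ to $f_h(u)$.

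Combining the two estimates, factoring out $h^{\tilde\alpha-1}$ and using $2^{\tilde\alpha}\ge 1$ to extract a common factor $2^{\tilde\alpha}$ yields the claimed bound. The only step that requires any real care is the uniform control of the singular integral in the power piece; but thanks to subadditivity/concavity of $x\mapsto x^{\tilde\alpha}$ for $\tilde\alpha\in(0,1]$ it reduces to two elementary one-variable optimisations, and the rest is bookkeeping.
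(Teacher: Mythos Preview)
Your proof is correct and follows essentially the same route as the paper: split $f_h$ via \eqref{g-bound} into a $g_0$ piece and a power-singularity piece, then bound each by elementary inequalities. The only cosmetic differences are that the paper phrases the $g_0$ bound as Young's convolution inequality (which, after bounding $\|K_h\|_{L^{1/\tilde\alpha}}$ by $2^{\tilde\alpha}h^{\tilde\alpha-1}\|K\|_\infty$, is exactly your H\"older step), and for the singular piece the paper uses the cruder bound $|c\pm h|\le 2h$ to get $2^{1+\tilde\alpha}h^{\tilde\alpha-1}/\tilde\alpha$, whereas your concavity argument yields the slightly sharper $2h^{\tilde\alpha-1}/\tilde\alpha$---both of which fit under the stated constant after factoring out $2^{\tilde\alpha}$.
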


\begin{proof}
Fix $u\in[-r,r]$. 
\eqref{g-bound} gives 
\ba{
f_h(u)\leq b\bra{\int_{\mathbb R}K_h(v-u)g_0(v)dv+\int_{\mathbb R}K(t)|u-\theta^*+ht|^{\tilde\alpha-1}dt}
=:b(I+II).
}
Since $K_h$ is supported on $[-h,h]\subset[-1,1]$, we can rewrite $I$ as
\[
I=\int_{-r_1}^{r_1}K_h(v-u)g_0(v)dv.
\]
Hence, Young's convolution inequality gives $I\leq\|K_h\|_{L^{1/\tilde\alpha}}\|g_0\|_{L^{1/(1-\tilde\alpha)}}$. 
Since
\[
\|K_h\|_{L^{1/\tilde\alpha}}\leq\bra{2h\|K_h\|_\infty^{1/\tilde\alpha}}^{\tilde\alpha}
\leq2^{\tilde\alpha} h^{\tilde\alpha-1}\|K\|_\infty,
\]
we obtain
\ben{\label{I-bound}
I\leq2^{\tilde\alpha} h^{\tilde\alpha-1}\|K\|_\infty\|g_0\|_{L^{1/(1-\tilde\alpha)}}.
} 
Meanwhile, for any $c\in\mathbb R$, a straightforward computation shows
\ba{
\int_{-1}^1|c+ht|^{\tilde\alpha-1}dt
=\begin{cases}
\displaystyle\frac{\abs{|c+h|^{\tilde\alpha}-|c-h|^{\tilde\alpha}}}{\tilde\alpha h} & \text{if }|c|>h,\\
\displaystyle\frac{|c+h|^{\tilde\alpha}+|c-h|^{\tilde\alpha}}{\tilde\alpha h} & \text{if }|c|\leq h.
\end{cases}
}
Noting that $|x^{\tilde\alpha}-y^{\tilde\alpha}|\leq|x-y|^{\tilde\alpha}$ for any $x,y\geq0$, we obtain 
\ben{\label{power-bound}
\sup_{c\in\mathbb R}\int_{-1}^1|c+ht|^{\tilde\alpha-1}dt\leq\frac{2^{1+\tilde\alpha}}{\tilde\alpha} h^{\tilde\alpha-1}.
}
Therefore, $II\leq\|K\|_\infty\frac{2^{1+\tilde\alpha}}{\tilde\alpha} h^{\tilde\alpha-1}$ since $K$ is supported on $[-1,1]$. 
Combining this with \eqref{I-bound} gives the desired result. 
\end{proof}

\subsection{Proof of Proposition \ref{prop:ds-kernel}}

In the following two lemmas, we deal with the numerator and denominator of $\dsr_h(\ell)$ separately. 
\begin{lemma}\label{lemma:ds-raw}
Assume \ref{ass:pp}. 
Suppose that there exist constants $\theta^*\in[-r,r]$, $\tilde\alpha\in(0,1]$, $b>1$ and a function $g_0\in L^{1/(1-\tilde\alpha)}([-r_1,r_1])$ such that \eqref{g-bound} holds. 
Assume also $h=h_T\asymp T^{-\gamma}$ as $T\to\infty$ for some $\gamma>0$. 
Then, for any $\eps>0$,
\ben{\label{ds-step1}
\E\sbra{\max_{\ell\in\mcl G_h}\abs{\ds_h(\ell)-\frac{T}{h}\E[1_{\{N_1(I_{0}^h)>0,N_2(I_{\ell}^h)>0\}}]}}
=O\bra{1+\sqrt{Th^{\tilde\alpha-\eps}}}.
}
\end{lemma}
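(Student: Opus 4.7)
The strategy is to reduce the deviation bound to a concentration inequality for a sum of locally dependent random variables and then combine Lemma~\ref{lemma:mom} with an $L^q$ maximal inequality over $\mcl G_h$. To conform to the measurability requirement of Lemma~\ref{lemma:mom}, I regroup the small buckets of length $h$ into unit intervals: for $\ell \in \mcl G_h$ and $j = 0, 1, \dots, T-1$, set
\begin{equation*}
X_j(\ell) := \sum_{k \in \mathbb Z:\,(k+1)h\in I_j} 1_{\{N_1(I_k^h) > 0,\,N_2(I_{k+\ell}^h) > 0\}}.
\end{equation*}
Since $|\ell|h \le r$ and $h \le 1$ eventually, $X_j(\ell)$ is $\sigma(N \cap (I_j \oplus r_1))$-measurable, and by stationarity $\E[\sum_{j=0}^{T-1} X_j(\ell)] = (T/h)\,\pr(N_1(I_0^h)>0,\,N_2(I_\ell^h)>0)$. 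The difference $\sum_j X_j(\ell) - \ds_h(\ell)$ consists of at most $2|\ell|$ boundary indicators and is bounded uniformly in $\ell$ by $N_1((0,r]) + N_1((T-r,T])$, whose expectation is $O(1)$ by Assumption~\ref{ass:pp}(i); this accounts for the ``$1$'' in the target bound.

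It thus suffices to bound $\E[\max_\ell|S_\ell|]$ for $S_\ell := \sum_{j=0}^{T-1}(X_j(\ell)-\E[X_j(\ell)])$. By stationarity and Campbell's formula,
\begin{equation*}
\E[X_j(\ell)] = h^{-1}\pr(N_1(I_0^h)>0,\,N_2(I_\ell^h)>0) \le h^{-1}\,\E[N_1(I_0^h)N_2(I_\ell^h)].
\end{equation*}
A standard change of variables gives $\E[N_1(I_0^h)N_2(I_\ell^h)] = h^2\lambda_1\lambda_2 f_h(\ell h)$, where $f_h$ is defined by \eqref{def:fh} with $K = K^\tri$; Lemma~\ref{fh-bound} then yields $f_h(\ell h) \lesssim h^{\tilde\alpha-1}$, and hence $\E[X_j(\ell)] \lesssim h^{\tilde\alpha}$. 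For higher moments, the trivial bound $X_j(\ell) \le N_1(I_j \oplus 1)$ combined with Assumption~\ref{ass:pp}(i) gives $\|X_j(\ell)\|_p \lesssim 1$ uniformly in $\ell$, $j$, and $h$.

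I then apply Lemma~\ref{lemma:mom} to $S_\ell$ with a large even moment order $q$, truncation level $M = h^{-\eps/4}$ and cutoff $\tau = h^{-\eps/4}$. Under Assumption~\ref{ass:pp}(ii) with sufficiently fast polynomial mixing, the two mixing-tail contributions and the truncation tail $T\|X_j(\ell)\,1_{\{X_j(\ell)>M\}}\|_q$ (handled via Chebyshev using a high-order moment from Assumption~\ref{ass:pp}(i)) are $o(\sqrt{Th^{\tilde\alpha-\eps}})$, leaving the leading variance term $\sqrt{T\tau M\,\E[|X_j(\ell)|]} \asymp \sqrt{Th^{\tilde\alpha-\eps/2}}$; hence $\|S_\ell\|_q \lesssim \sqrt{Th^{\tilde\alpha-\eps/2}}$. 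The $L^q$ maximal bound $\E[\max_\ell|S_\ell|] \le |\mcl G_h|^{1/q}\max_\ell\|S_\ell\|_q$ with $|\mcl G_h| \lesssim h^{-1}$ and $q$ sufficiently large absorbs $|\mcl G_h|^{1/q}$ into a further infinitesimal inflation of $\eps$, yielding the stated bound. \emph{The hard part} is the joint tuning of $M$, $\tau$ and $q$ so that all three auxiliary error terms in Lemma~\ref{lemma:mom} simultaneously fall below the target rate $\sqrt{Th^{\tilde\alpha-\eps}}$; this balancing relies crucially on both the arbitrary polynomial decay of $\alpha^N_{p,p}$ from Assumption~\ref{ass:pp}(ii) and the arbitrary polynomial moments of $N_a((0,1])$ from Assumption~\ref{ass:pp}(i).
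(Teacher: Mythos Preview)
Your proposal is correct and follows essentially the same approach as the paper: the boundary-term reduction, the regrouping of the fine buckets into unit-interval blocks $X_j(\ell)$ (the paper's $Y_j(\ell)$), the first-moment bound $\E[X_j(\ell)]\lesssim h^{\tilde\alpha}$ via Campbell's formula and Lemma~\ref{fh-bound}, the application of Lemma~\ref{lemma:mom} with $M=h^{-\eps/4}$ and $\tau\asymp h^{-\eps/4}$, and the final $L^q$ maximal inequality over $\mcl G_h$ all match the paper's argument almost verbatim. The only cosmetic difference is that the paper bounds $Y_j(\ell)\le N_1(I_j)$ rather than your slightly coarser $X_j(\ell)\le N_1(I_j\oplus 1)$, which is immaterial.
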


\begin{proof}
Set
\[
\dsn_h(\ell):=\sum_{k=0}^{T/h-1}1_{\{N_1(I_{k}^h)>0,\ N_2(I_{k+\ell}^h)>0\}}.
\]
Since
\ba{
|\ds_h(\ell)-\dsn_h(\ell)|
&\leq\sum_{k=0}^{|\ell|-1}1_{\{N_1(I_{k}^h)>0\}}
+\sum_{k=T/h-|\ell|}^{T/h-1}1_{\{N_1(I_{k}^h)>0\}}\\
&\leq N_1((0,|\ell|h])+N_1((T-|\ell|h,T]),
}
we have
\ben{\label{ds-edge}
\E\sbra{\max_{\ell\in\mcl G_h}|\ds_h(\ell)-\dsn_h(\ell)|}
\leq \E[N_1((0,r])]+\E[N_1((T-r,T])]=2\lambda_1r=O(1).
}
Also, $\E[\dsn_h(\ell)]=(T/h)\E[1_{\{N_1(I_{0}^h)>0,N_2(I_{\ell}^h)>0\}}]$ by stationarity. 
Therefore, \eqref{ds-step1} follows once we show
\be{
\E\sbra{\max_{\ell\in\mcl G_h}\abs{\dsn_h(\ell)-\E[\dsn_h(\ell)]}}
=O\bra{\sqrt{Th^{\tilde\alpha-\eps}}}.
}
For any $p>1$, Jensen's inequality gives
\ba{
\E\sbra{\max_{\ell\in\mcl G_h}\abs{\dsn_h(\ell)-\E[\dsn_h(\ell)]}}
&\leq\norm{\max_{\ell\in\mcl G_h}\abs{\dsn_h(\ell)-\E[\dsn_h(\ell)]}}_p\\
&\leq|\mcl G_h|^{1/p}\max_{\ell\in\mcl G_h}\norm{\dsn_h(\ell)-\E[\dsn_h(\ell)]}_p.
}
Let $p$ be an even integer such that $p\geq\frac{4}{\eps(1\wedge\gamma)}$. Then we have $|\mcl G_h|^{1/p}=O(h^{-\eps/4})$. 
Therefore, it suffices to prove
\ben{\label{ds-lp}
\max_{\ell\in\mcl G_h}\norm{\dsn_h(\ell)-\E[\dsn_h(\ell)]}_p=O\bra{\sqrt{Th^{\tilde\alpha-\eps/2}}}.
}
For each $\ell\in\mcl G_h$, set
\[
Y_j(\ell):=\sum_{k=j/h}^{(j+1)/h-1}1_{\{N_1(I_{k}^h)>0,N_2(I_{k+\ell}^h)>0\}},
\]
so that
$\dsn_h(\ell)=\sum_{j=0}^{T-1}Y_j(\ell)$. 
Observe that $Y_j(\ell)$ is $\sigma(N\cap(I_j\oplus r))$-measurable. 
Also, since
\ba{
Y_j(\ell)\leq\sum_{k=j/h}^{(j+1)/h-1}N_1(I_{k}^h)=N_1(I_{j}),
}
\ref{ass:pp}(i) yields 
\ben{\label{y-mom}
\sup_j\|Y_j(\ell)\|_q\leq \|N_1(I_{j})\|_q\lesssim\lambda_1
}
for any $q\geq1$. 
Therefore, applying \cref{lemma:mom} to $(Y_j(\ell))_{j=0}^{T-1}$ with $M=h^{-\eps/4}$ and $\tau=\lfloor h^{-\eps/4}\rfloor$ gives
\bm{
\norm{\dsn_h(\ell)-\E[\dsn_h(\ell)]}_p
\leq C_p\left\{
\bra{Th^{-\eps/2}\E[Y_0(\ell)]+Th^{-\eps/2}\sum_{m=\lfloor h^{-\eps/4}\rfloor}^\infty\alpha^N_{p,p}(m;r_1)}^{1/2}\right.\\
\left.+T^{1/p}h^{-\eps/4}\bra{\sum_{m=0}^\infty(m+1)^{p-2}\alpha^N_{p,p}(m;r_1)}^{1/p}\right\}
+2T\norm{Y_0(\ell)1_{\{Y_0(\ell)>h^{-\eps/4}\}}}_p,
}
where $C_p$ is a constant depending only on $p$ and $\lambda_1$. 
By \ref{ass:pp}(ii),
\ba{
\sum_{m=\lfloor h^{-\eps/4}\rfloor}^\infty\alpha^N_{p,p}(m;r_1)=O(h^{\tilde\alpha})
\quad\text{and}\quad
\sum_{m=0}^\infty(m+1)^{p-2}\alpha^N_{p,p}(m;r_1)=O(1).
}
Also, we have $\|Y_0(\ell)1_{\{Y_0(\ell)>h^{-\eps/4}\}}\|_p\leq h^{p\eps/4}(\E[Y_0(\ell)^{p^2+p}])^{1/p}=O(T^{-1})$ by \eqref{y-mom}. 
Therefore, \eqref{ds-lp} follows once we show
\ben{\label{y-mean}
\max_\ell\E[Y_0(\ell)]=O\bra{h^{\tilde\alpha}}.
}
Observe that
\be{
\E[Y_0(\ell)]
=h^{-1}\E[1_{\{N_1(I_{0}^h)>0,N_2(I_{\ell}^h)>0\}}]
\leq h^{-1}\E[N_1(I_{0}^h)N_2(I_{\ell}^h)]
}
and
\ban{
\E[N_1(I_{0}^h)N_2(I_{\ell}^h)]
&=\lambda_1\lambda_2\int_{\mathbb R^2}1_{I_0^h}(x)1_{I_\ell^h}(x+u)g(u)dxdu
\notag\\
&=\lambda_1\lambda_2h^2\int_{\mathbb R}K^\tri_h(u-\ell h)g(u)du.
\label{ds-kernel}
}
Therefore, \eqref{y-mean} follows from \cref{fh-bound}.
\end{proof}

\begin{lemma}\label{lemma:ds-denom}
Assume \ref{ass:pp}. Then,
\besn{\label{ds-denom1}
&\max_{\ell\in\mcl G_h}\abs{\frac{1}{T}\sum_{k=|\ell|}^{ T/h-1-|\ell|}1_{\{N_1(I_{k}^h)>0\}}-\frac{\pr(N_1(I_{0}^h)>0)}{h}}=O_p\bra{\frac{1}{\sqrt T}},\\
&\max_{\ell\in\mcl G_h}\abs{\frac{1}{T}\sum_{k=|\ell|}^{ T/h-1-|\ell|}1_{\{N_2(I_{k+\ell}^h)>0\}}-\frac{\pr(N_2(I_{0}^h)>0)}{h}}=O_p\bra{\frac{1}{\sqrt T}}
}
and
\ben{\label{ds-denom2}
\max_{\ell\in\mcl G_h}\abs{\frac{1}{T}\min\cbra{\sum_{k=|\ell|}^{T/h-1-|\ell|}1_{\{N_1(I_{k}^h)>0\}},\ \sum_{k=|\ell|}^{T/h-1-|\ell|}1_{\{N_2(I_{k+\ell}^h)>0\}}}-\lambda_1\wedge\lambda_2}\to^p0
}
as $T\to\infty$. 
\end{lemma}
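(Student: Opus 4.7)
\textbf{Proof plan for Lemma \ref{lemma:ds-denom}.}

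The strategy for (\ref{ds-denom1}) is to first eliminate the $\ell$-dependence by replacing the partial sums with the $\ell$-free full sums $S_1 := \sum_{k=0}^{T/h-1} 1_{\{N_1(I_k^h) > 0\}}$ and $S_2 := \sum_{k=0}^{T/h-1} 1_{\{N_2(I_k^h) > 0\}}$, controlling the discarded edge terms uniformly, and then bounding the stochastic fluctuation of each full sum around its mean by a variance-type argument. By stationarity, $\E[S_a] = (T/h)\pr(N_a(I_0^h)>0)$ for $a=1,2$. For the first sum in (\ref{ds-denom1}), the partial sum $S_1^{(\ell)} := \sum_{k=|\ell|}^{T/h-1-|\ell|} 1_{\{N_1(I_k^h)>0\}}$ satisfies $0 \leq S_1 - S_1^{(\ell)} \leq N_1((0, |\ell|h]) + N_1((T-|\ell|h, T]) \leq N_1((0, r]) + N_1((T-r, T])$ for all $\ell \in \mcl G_h$. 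Since the right-hand side has expectation $2\lambda_1 r = O(1)$, we get $\E[\max_{\ell\in\mcl G_h} |S_1 - S_1^{(\ell)}|] = O(1)$. The $N_2$-sum reduces analogously after the change of index $k' = k + \ell$, with edge terms bounded by $N_2((0, 2r]) + N_2((T-2r, T])$.

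Next, I would bound $\|S_1 - \E[S_1]\|_2$ by partitioning into unit-length blocks $Z_j := \sum_{k=j/h}^{(j+1)/h-1} 1_{\{N_1(I_k^h) > 0\}}$ for $j = 0, 1, \ldots, T-1$, so that $S_1 = \sum_{j=0}^{T-1} Z_j$, each $Z_j$ is $\sigma(N \cap (I_j \oplus r_1))$-measurable, and $Z_j \leq N_1(I_j)$. By \ref{ass:pp}(i), $\|Z_j\|_q \leq B_q \lambda_1$ for all $q \geq 1$, so we may apply \cref{lemma:mom} with $q = 2$, a constant truncation $M$ chosen large enough that $\|Z_0 1_{\{Z_0 > M\}}\|_2 \lesssim T^{-1}$ (possible by the all-orders moment bound combined with Markov's inequality), and $\tau = 1$. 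Using the super-polynomial decay of $\alpha^N_{2,2}(m; r_1)$ from \ref{ass:pp}(ii), the right-hand side of \cref{lemma:mom} is $O(\sqrt T)$, so $\|S_1 - \E[S_1]\|_2 = O(\sqrt T)$. Dividing by $T$ and combining with the $O_p(1/T)$ edge bound yields the first line of (\ref{ds-denom1}) uniformly in $\ell$; the second line follows by the same argument applied to $S_2$.

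For (\ref{ds-denom2}), I would invoke Korolyuk's identity for simple stationary point processes, namely $\pr(N_a(I_0^h) > 0)/h \to \lambda_a$ as $h \downarrow 0$ (\cite[Proposition 3.3.IV]{daley2006introduction}). Combined with (\ref{ds-denom1}), this gives
\[
\max_{\ell \in \mcl G_h} \abs{\frac{1}{T}\sum_{k=|\ell|}^{T/h-1-|\ell|} 1_{\{N_a(I_k^h) > 0\}} - \lambda_a} \to^p 0, \qquad a = 1, 2.
\]
The conclusion then follows from the Lipschitz bound $|\min(a_1, a_2) - \min(b_1, b_2)| \leq |a_1 - b_1| + |a_2 - b_2|$, applied uniformly in $\ell$.

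The main obstacle is the variance estimate via \cref{lemma:mom}. The delicate point is that although $\pr(N_1(I_k^h) > 0) = O(h)$, the block sum $Z_j$ can a priori be as large as $1/h$, so a careless truncation would introduce an unwanted $h^{-1}$ factor and spoil the $O(\sqrt T)$ rate. The key is to exploit the pointwise domination $Z_j \leq N_1(I_j)$ together with the all-orders moment bound in \ref{ass:pp}(i) so that the truncation tail $\|Z_0 1_{\{Z_0 > M\}}\|_2$ decays faster than any polynomial in $M$, allowing a constant truncation level $M$ and a $T$-independent $\tau$ to suffice.
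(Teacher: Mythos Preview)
Your overall architecture---remove the $\ell$-dependence by an $O(1)$ edge bound, pass to the unit-block sums $Z_j$, and then control the fluctuation of $S_1=\sum_j Z_j$ around its mean---matches the paper. The deduction of \eqref{ds-denom2} from \eqref{ds-denom1} via $\pr(N_a(I_0^h)>0)/h\to\lambda_a$ and the Lipschitz property of $\min$ is also what the paper does.

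The gap is in your variance step. You claim one can pick a \emph{constant} $M$ with $\|Z_0 1_{\{Z_0>M\}}\|_2\lesssim T^{-1}$. This is impossible: the law of $Z_0$ is dominated by that of $N_1(I_0)$, which does not depend on $T$, so $\|Z_0 1_{\{Z_0>M\}}\|_2$ is a fixed positive number for any fixed $M$ (unless $Z_0$ is a.s.\ bounded, which is not assumed). Consequently the truncation remainder $2T\|Z_0 1_{\{Z_0>M\}}\|_2$ in \cref{lemma:mom} is of order $T$, not $o(\sqrt T)$. If instead you let $M=M_T\to\infty$, the leading terms of \cref{lemma:mom} pick up factors of $M_T$ (the term $(T\tau M_T\,\E|Z_0|)^{1/2}$ with $\tau\ge1$ is already $\gtrsim\sqrt{TM_T}$, and the term $T^{1/2}M_T(\sum_m\alpha)^{1/2}$ is $\gtrsim\sqrt T\,M_T$), so you cannot recover the sharp $O(\sqrt T)$ bound this way---at best $O(T^{1/2+\eps})$.

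The paper sidesteps \cref{lemma:mom} entirely for this lemma and bounds the variance directly: since $Z_j$ is $\sigma(N\cap I_j)$-measurable, the mixing covariance inequality (Theorem~3 in \cite[Section~1.2]{Doukhan1994}) gives $|\Cov[Z_j,Z_{j+m}]|\le 8\|Z_j\|_4\|Z_{j+m}\|_4\,\alpha^N_{1,1}(m;0)^{1/2}$, and summing over $m$ using \ref{ass:pp}(ii) yields $\Var\bigl[\sum_j Z_j\bigr]\lesssim T\|Z_0\|_4^2=O(T)$. This delivers $\|S_1-\E S_1\|_2=O(\sqrt T)$ with no truncation, which is exactly the missing ingredient.
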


\begin{proof}
\eqref{ds-denom2} follows from \eqref{ds-denom1} and the definition of intensity (cf.~Eq.(3.3.4) of \cite{daley2006introduction}), so it remains to prove \eqref{ds-denom1}.  
We only prove the first equation of \eqref{ds-denom1} because the second can be shown by almost the same argument. 

First, the same argument as in the proof of \eqref{ds-edge} gives 
\ba{
\E\sbra{\max_{\ell\in\mcl G_h}\abs{\sum_{k=|\ell|}^{T/h-1-|\ell|}1_{\{N_1(I_{k}^h)>0\}}-\sum_{k=0}^{T/h-1}1_{\{N_1(I_{k}^h)>0\}}}}=O(1).
}
Hence it suffices to show
\ben{\label{ds-denom-aim}
\frac{1}{T}\sum_{k=0}^{T/h-1}1_{\{N_1(I_{k}^h)>0\}}=\frac{\pr(N_1(I_{0}^h)>0)}{h}+O_p\bra{\frac{1}{\sqrt T}}.
}
We rewrite the left hand side as 
\[
\frac{1}{T}\sum_{k=0}^{T/h-1}1_{\{N_1(I_{k}^h)>0\}}=\frac{1}{T}\sum_{j=0}^{T-1}X_j,
\]
where $X_j:=\sum_{k=j/h}^{(j+1)/h-1}1_{\{N_1(I_{k}^h)>0\}}$. 
Observe that $X_j$ is $\sigma(N\cap I_j)$-measurable. 
Hence,
\ba{
\Var\sbra{\sum_{j=0}^{T-1}X_j}
\leq\sum_{j,m=0}^{T-1}|\Cov[X_j,X_{j+m}]|
\leq8\sum_{j,m=0}^{T-1}\|X_j\|_4\|X_m\|_4\sqrt{\alpha_{1,1}(m;0)}
\lesssim T\|X_0\|_4^2,
}
where the second inequality follows by Theorem 3 in \cite[Section 1.2]{Doukhan1994} and the third by \ref{ass:pp}(ii). 
Since $|X_0|\leq N_1(I_{0})$, we obtain 
\[
\sum_{j=0}^{T-1}X_j=\sum_{j=0}^{T-1}\E[X_j]+O_p(\sqrt T)
=\frac{T}{h}\E[1_{\{N_1(I_{0}^h)>0\}}]+O_p(\sqrt T).
\]
Since $\E[1_{\{N_1(I_{0}^h)>0\}}]=\pr(N_1(I_{0}^h)>0)$, we obtain \eqref{ds-denom-aim}.
\end{proof}

\begin{proof}[Proof of \cref{prop:ds-kernel}]
Since $g$ is bounded, \eqref{g-bound} holds for $b=\sup_{x\in\mathbb R}|g(x)|$, $\tilde\alpha=1$ and $g_0\equiv0$ (with $\theta^*$ arbitrary). 
Also, by assumption, $Th^{\tilde\alpha+\eps}\to\infty$ as $T\to\infty$ for some $\eps>0$. 
Hence, \cref{lemma:ds-raw} gives
\ben{\label{ds-raw-applied}
\max_{\ell\in\mcl G_h}\abs{\frac{\ds_h(\ell)}{Th}-\frac{\E[1_{\{N_1(I_{0}^h)>0,N_2(I_{\ell}^h)>0\}}]}{h^2}}\to^p0.
}
This particularly gives $\max_{\ell\in\mcl G_h}\ds_h(\ell)/Th=O_p(1)$. 
Combining this with \eqref{ds-denom2} gives
\[
\max_{\ell\in\mcl G_h}\abs{\frac{\dsr_h(\ell)}{h}-\frac{1}{\lambda_1\wedge\lambda_2}\frac{\E[1_{\{N_1(I_{0}^h)>0,N_2(I_{\ell}^h)>0\}}]}{h^2}}\to^p0.
\]
Since $\lambda_1\lambda_2/(\lambda_1\wedge\lambda_2)=\lambda_1\vee\lambda_2$, we complete the proof once we show
\ba{
\max_{\ell\in\mcl G_h}\abs{\frac{\E[1_{\{N_1(I_{0}^h)>0,N_2(I_{\ell}^h)>0\}}]}{h^2}-\lambda_1\lambda_2\int_{\mathbb R}K^\tri_h(u-\ell h)g(u)du}
=o(1).
}
Observe that
\ba{
0&\leq\E[N_1(I_{0}^h)N_2(I_{\ell}^h)]-\E[1_{\{N_1(I_{0}^h)>0,N_2(I_{\ell}^h)>0\}}]\\
&=\E[(N_1(I_{0}^h)-1)_+\cdot N_2(I_{\ell}^h)]
+\E[1_{\{N_1(I_{0}^h)>0\}}(N_2(I_{\ell}^h)-1)_+]\\
&\leq\E[N_1(I_0^h)(N_1(I_{0}^h)-1)_+\cdot N_2(I_{\ell}^h)]
+\E[N_1(I_{0}^h)N_2(I_\ell^h)(N_2(I_{\ell}^h)-1)_+]\\
&=\E[N_1(I_0^h)(N_1(I_{0}^h)-1)N_2(I_{\ell}^h)]
+\E[N_1(I_{0}^h)N_2(I_\ell^h)(N_2(I_{\ell}^h)-1)].
}
Since we assume \eqref{ds-extra} for $\varpi=1$, we obtain
\[
\max_{\ell\in\mcl G_h}\abs{\E[N_1(I_{0}^h)N_2(I_{\ell}^h)]-\E[1_{\{N_1(I_{0}^h)>0,N_2(I_{\ell}^h)>0\}}]}=o(h^2).
\]
Combining this with \eqref{ds-kernel} gives the desired result. 
\end{proof}

\subsection{Proof of Theorem \ref{thm:ds}}

The following lemma summarizes identifiability conditions implied by \ref{ass:cpcf}. 
\begin{lemma}\label{lemma:gap}
Assume \ref{kernel} and define the function $f_h$ as in \eqref{def:fh}.
\begin{enumerate}[label=(\alph*)]

\item\label{gap1} Assume \ref{ass:cpcf}\ref{type-I}. Then, for some $\sigma\in\{-1,1\}$, there exist constants $A>1$ and $0<h_0<1$ depending only on $\alpha,b,\delta$ such that
\ben{\label{type-I-gap}
f_{h}(\theta^*+\sigma v)-\sup_{u\in[-r,r]:|u-\theta^*|>Ah}f_h(u)
\geq h^{\alpha-1}
\quad\text{for all $h<h_0$ and $v\in[h,2h]$}.
}

\item\label{gap2} Assume \ref{ass:cpcf}\ref{type-II}. 
Then, for some $\sigma\in\{-1,1\}$, there exist constants $A>1,c>0$ and $0<h_0<1$ depending only on $\alpha,\alpha_0,b,\delta,K$ such that
\ben{\label{type-II-gap}
f_{h}(\theta^*+\sigma v)-\sup_{u\in[-r,r]:|u-\theta^*|>Ah}f_h(u)
\geq ch^{\alpha-1}
\quad\text{for all $h< h_0$ and $v\in[0,h]$.}
} 
    
\end{enumerate}
\end{lemma}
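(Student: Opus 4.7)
The plan is to prove both parts by estimating $f_h$ from below at a point near $\theta^*$ on the appropriate side and from above at all points with $|u-\theta^*|>Ah$, then choosing $A$ large enough to separate the two. The choice of $\sigma$ is dictated by the one-sided structure of \ref{ass:cpcf}: in part (a), $\sigma$ corresponds to the side where the sup-bound $g(\theta^*)-g(u)\leq b|u-\theta^*|^{\alpha-1}$ holds; in part (b), to the side where the inf-bound $g(u)\geq|u-\theta^*|^{\alpha-1}/b$ holds. Throughout I use the representation $f_h(u)=\int K(t)g(u+ht)\,dt$ with $K$ supported on $[-1,1]$ and $\int K=1$; then $u+ht$ ranges over an interval of length $2h$ centered at $u$, which makes the one-sided assumptions naturally applicable.

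For part (a), take $u_1=\theta^*+\sigma v$ with $v\in[h,2h]$. Then for every $t\in[-1,1]$ the point $u_1+ht$ lies on the $\sigma$-side of $\theta^*$ within distance $3h$, so the one-sided sup-bound and $\int K=1$ give $f_h(u_1)\geq g(\theta^*)-b\cdot 3^{\alpha-1}h^{\alpha-1}$. For $u_2$ with $|u_2-\theta^*|>Ah$, every $u_2+ht$ satisfies $|u_2+ht-\theta^*|\geq(A-1)h$; combining the two-sided inf-bound (when $|u_2+ht-\theta^*|<\delta$) with the far-away gap $g\leq g(\theta^*)-1/b$ (when $\geq\delta$) yields $f_h(u_2)\leq g(\theta^*)-\min\{(A-1)^{\alpha-1}/b,\,1/b\}\,h^{\alpha-1}$ for sufficiently small $h$. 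Since $\alpha-1>0$, both $(A-1)^{\alpha-1}$ and $h^{-(\alpha-1)}$ can be made arbitrarily large, so picking $A$ (depending on $\alpha,b$) and $h_0$ (depending on $\alpha,b,\delta$) so that the coefficient on the right-hand side exceeds $b\cdot 3^{\alpha-1}+1$ produces the gap $h^{\alpha-1}$.

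For part (b), the lower bound exploits the singularity of $g$ at $\theta^*$. For $u_1=\theta^*+\sigma v$ with $v\in[0,h]$, positivity and continuity of $K$ at $0$ give a strictly positive mass of $t$'s for which $u_1+ht$ lies on the $\sigma$-side within distance $2h$ of $\theta^*$; applying $g(\theta^*+s)\geq|s|^{\alpha-1}/b$ on this set together with $(v+ht)^{\alpha-1}\gtrsim h^{\alpha-1}$ (valid for $\alpha<1$) yields $f_h(u_1)\geq c_1h^{\alpha-1}$ with $c_1$ depending on $\alpha,b,K$. For the upper bound, decompose $g\leq b(g_0+|\cdot-\theta^*|^{\alpha-1})$. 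Young's inequality (as already used in the proof of \cref{fh-bound}) bounds the $g_0$-convolution by $O(h^{\alpha_0-1})$, while $|u_2+ht-\theta^*|\geq(A-1)h$ and $\alpha-1<0$ bound the power-law part by $b(A-1)^{\alpha-1}h^{\alpha-1}$. Because $\alpha<\alpha_0$, the first term equals $h^{\alpha_0-\alpha}\cdot h^{\alpha-1}=o(h^{\alpha-1})$, so taking $A$ large enough to make $b(A-1)^{\alpha-1}\leq c_1/4$ and $h_0$ small enough gives $f_h(u_2)\leq(c_1/2)h^{\alpha-1}$, completing the proof with $c=c_1/2$.

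The main delicate point is the lower bound in part (b) when $v$ is as small as $0$: the integrand is pointwise singular at $t=-\sigma v/h$, and one cannot use a pointwise lower bound. The fix is to restrict the integral to $t$ on the $\sigma$-side and bounded away from the singularity, using continuity of $K$ at $0$ with $K(0)>0$ to secure a positive remaining mass. A secondary bookkeeping task is to verify that all constants depend only on the declared parameters: in particular $\|K\|_\infty$ enters part (b) via the Young-type estimate on the $g_0$ contribution but is unnecessary in part (a), which uses only $\int K=1$ and the support condition on $K$.
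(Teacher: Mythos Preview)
Your proposal is correct and follows essentially the same approach as the paper: choose $\sigma$ from the one-sided hypothesis in \ref{ass:cpcf}, bound $f_h$ from below near $\theta^*$ and from above for $|u-\theta^*|>Ah$ using the remaining two-sided bounds, then separate the two estimates by taking $A$ large and $h$ small (and, in part~(b), invoke the Young-type estimate from \cref{fh-bound} for the $g_0$ contribution). One minor slip in part~(a): the upper bound should read $f_h(u_2)\le g(\theta^*)-b^{-1}\min\{(A-1)^{\alpha-1}h^{\alpha-1},\,1\}$ (the far-away gap $1/b$ is an absolute constant, not $h^{\alpha-1}/b$), but your subsequent remark that $h^{-(\alpha-1)}$ can be made arbitrarily large shows you already have the correct argument in mind.
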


\begin{proof}
\noindent\ref{gap1} By assumption, we have
\ben{\label{type-I-sigma}
\sup_{0<\sigma(u-\theta^*)<\delta}\frac{g(\theta^*)-g(u)}{|u-\theta^*|^{\alpha-1}}\leq b
}
for some $\sigma\in\{-1,1\}$. 
Also, we have
\ben{\label{eq:B2}
g(u)\leq g(\theta^*)-b^{-1}\min\cbra{1,|u-\theta^*|^{\alpha-1}}
}
for all $u\in\mathbb R$. 
Now, for any $A>1$, we have by \ref{kernel} and \eqref{eq:B2}
\ba{
\sup_{u\in[-r,r]:|u-\theta^*|>Ah}f_h(u)
&\leq g(\theta^*)
-b^{-1}\sup_{u\in[-r,r]:|u-\theta^*|>Ah}\int_{-1}^1K(t)\min\cbra{1,|u-\theta^*+ht|^{\alpha-1}}dt\\
&\leq g(\theta^*)-b^{-1}\min\cbra{1,(A-1)^{\alpha-1}h^{\alpha-1}}.
}
Meanwhile, by \eqref{type-I-sigma}, we have for all $h<\delta/3$ and $v\in[h,2h]$,
\ba{
f_{h}(\theta^*+\sigma v)
\geq g(\theta^*)-b\int_{-1}^1K(t)|\sigma v+ht|^{\alpha-1}dt
\geq g(\theta^*)-b(3h)^{\alpha-1}.
}
Combining these estimates gives
\ba{
f_{h}(\theta^*+\sigma v)-\sup_{u\in[-r,r]:|u-\theta^*|>Ah}f_h(u)
\geq b^{-1}\min\cbra{1,(A-1)^{\alpha-1}h^{\alpha-1}}-b(3h)^{\alpha-1}.
}
Therefore, if $A\geq 1+(b+3^{\alpha-1}b^2)^{1/(\alpha-1)}$ and $h<\min\{\delta/3,\,\{b(3^{\alpha-1}b+1)\}^{-1/(\alpha-1)}\}$, we have \eqref{type-I-gap}.
\smallskip

\noindent\ref{gap2} By assumption, we have
\ben{\label{type-II-sigma}
\inf_{0<\sigma(u-\theta^*)<\delta}\frac{g(u)}{|u-\theta^*|^{\alpha-1}}\geq\frac{1}{b}
}
for some $\sigma\in\{-1,1\}$. 
Also, we have $\inf_{t\in[-\delta_0,\delta_0]}K(t)\geq K(0)/2>0$ for some $0<\delta_0<1$ by \ref{kernel}. 
Combining this with \eqref{type-II-sigma}, we have for all $h<\delta/2$ and $v\in[0,h]$
\ba{
f_{h}(\theta^*+\sigma v)\geq \frac{K(0)}{2b}\int_0^{\delta_0} |v+ht|^{\alpha-1}dt 
\geq \frac{K(0)}{2b}h^{\alpha-1}\int_0^{\delta_0} (1+t)^{\alpha-1}dt 
\geq\frac{K(0)\delta_0}{4b}h^{\alpha-1}.
}
Meanwhile, applying \eqref{I-bound} to $\tilde\alpha=\alpha_0$ gives
\[
\int_{\mathbb R}K_h(v-u)g_0(v)dv\leq 2^{\alpha_0}h^{\alpha_0-1}\|K\|_\infty b.
\]
Hence, for any $A>1$, we have by \ref{ass:cpcf} and \ref{kernel}
\ba{
\sup_{u\in[-r,r]:|u-\theta^*|>Ah}f_{h}(u)
&\leq  b\|K\|_\infty\bra{2^{\alpha_0} h^{\alpha_0-1}b
+\sup_{u\in[-r,r]:|u-\theta^*|>Ah}\int_{-1}^1|u-\theta^*+ht|^{\alpha-1}dt}\\
&\leq2b\|K\|_\infty\bra{bh^{\alpha_0-1}+\frac{h^{\alpha-1}}{(A-1)^{1-\alpha}}}.
}
Therefore, if $A$ is sufficiently large such that
\[
\frac{2\|K\|_\infty b}{(A-1)^{1-\alpha}}\leq\frac{K(0)\delta_0}{8b},
\]
we have
\ba{
f_h(\theta^*+\sigma v)-\sup_{u\in[-r,r]:|u-\theta^*|>Ah}f_{h}(u)
\geq\frac{K(0)\delta_0}{8b}h^{\alpha-1}-2b^2\|K\|_\infty h^{\alpha_0-1}.
}
Consequently, \eqref{type-II-gap} holds if 
\[
h_0\leq\min\cbra{\frac{\delta}{2},\ \bra{\frac{K(0)\delta_0}{32\|K\|_\infty b^3}}^{1/(\alpha_0-\alpha)}}
\quad\text{and}\quad 
c\leq \frac{K(0)\delta_0}{16b}.
\]
This completes the proof.
\end{proof}

\begin{proof}[Proof of \cref{thm:ds}]
Observe that \eqref{g-bound} holds for $\tilde\alpha=\alpha\wedge1$ under \ref{ass:cpcf}. 
Hence, \cref{lemma:ds-raw} gives
\[
h^{1-\alpha}\max_{\ell\in\mcl G_h}\abs{\frac{\ds_h(\ell)}{Th}-\frac{E[1_{\{N_1(I_{0}^h)>0,N_2(I_{\ell}^h)>0\}}]}{h^2}}=O\bra{\frac{h^\alpha}{T}+\frac{1}{\sqrt{Th^{\beta_\alpha+\eps}}}}\quad\text{for any }\eps>0,
\]
where we used the identity $2\alpha-\alpha\wedge1=\beta_\alpha$.  
Also, by the proof of \cref{prop:ds-kernel} and \eqref{ds-extra},
\[
\max_{\ell\in\mcl G_h}\frac{\abs{\E[N_1(I_{0}^h)N_2(I_{\ell}^h)]-\E[1_{\{N_1(I_{0}^h)>0,N_2(I_{\ell}^h)>0\}}]}}{h^2}=o(h^{\alpha-1}).
\]
Now, define the function $f_h$ in \eqref{def:fh} with $K=K^\tri$. 
Recall that $h\asymp T^{-\gamma}$ with $0<\gamma<1/\beta_\alpha$. 
Therefore, combining the above two equations with \eqref{ds-kernel}, we obtain
\[
h^{1-\alpha}\max_{\ell\in\mcl G_h}\abs{\frac{\ds_h(\ell)}{Th}-\lambda_1\lambda_2f_h(\ell h)}\to^p0.
\]
Combining this with \cref{fh-bound} particularly gives $\max_{\ell\in\mcl G_h}\ds_h(\ell)/Th=O_p(h^{\tilde\alpha-1})$. 
Hence, by \cref{lemma:ds-denom},
\[
h^{1-\alpha}\max_{\ell\in\mcl G_h}\abs{\frac{\pr(N_1(I_{0}^h)>0)\wedge\pr(N_2(I_{0}^h)>0)}{h^2}\dsr_h(\ell)-\frac{\ds_h(\ell)}{Th}}\to^p0.
\]
Therefore, we have
\ben{\label{ds-convergence}
h^{1-\alpha}\bar\Delta_T\to^p0\qquad\text{as}\quad T\to\infty,
}
where
\[
\bar\Delta_T:=\max_{\ell\in\mcl G_h}\abs{\frac{\pr(N_1(I_{0}^h)>0)\wedge\pr(N_2(I_{0}^h)>0)}{\lambda_1\lambda_2h^2}\dsr_h(\ell)-f_h(\ell h)}.
\]

We turn to the main body of the proof. 
Consider the case $\alpha>1$. 
Then, by \cref{lemma:gap}, for some $\sigma\in\{-1,1\}$, there exist constants $A>1$ and $0<h_0<1$ depending only on $\alpha,b,\delta$ such that \eqref{type-I-gap} holds. 
We can find an integer $\ell^*$ such that $\ell^*h=\theta^*+\sigma v$ for some $v\in[h,2h]$. 
Observe that $\ell^*\in\mcl G_h$ for sufficiently small $h$. 
Then, since $\hat\theta_h^{DS}$ is a maximizer of $\mcl G_h\ni \ell\mapsto\dsr_h(\ell)\in[0,\infty)$, we have
\ba{
\pr\bra{|\hat\theta_h^{DS}-\theta^*|>(A+4)h}
&\leq \pr\bra{|\hat\theta_h^{DS}-\ell^* h|>(A+2)h}\\
&\leq \pr\bra{\dsr_{h}(\ell^*)\leq\max_{\ell\in\mcl G_h:|\ell-\ell^*|>A+2}\dsr_{h}(\ell)}\\
&\leq \pr\bra{f_{h}(\ell^*h)\leq\max_{\ell\in\mcl G_h:|\ell-\ell^*|>A+2}f_h(\ell h)+2\bar\Delta_T}\\
&\leq \pr\bra{f_{h}(\ell^*h)\leq\max_{\ell\in\mcl G_h:|\ell h-\theta^*|>Ah}f_h(\ell h)+2\bar\Delta_T}.
}
Hence, \eqref{type-I-gap} gives $\pr(|\hat\theta_h^{DS}-\theta^*|>(A+4)h)
\leq\pr\bra{2\bar\Delta_T\geq h^{\alpha-1}}$. 
Thus we obtain the desired result by \eqref{ds-convergence}.

Next, consider the case $\alpha<1$. 
Then, by \cref{lemma:gap}, for some $\sigma\in\{-1,1\}$, there exist constants $A>1,c>0$ and $0<h_0<1$ depending only on $\alpha,\alpha_0,b,\delta$ such that \eqref{type-II-gap} holds. 
We can find an integer $\ell^*$ such that $\ell^*h=\theta^*+\sigma v$ for some $v\in[0,h]$. 
Then, a similar argument to the above shows $\pr(|\hat\theta_h^{DS}-\theta^*|>(A+2)h)\to0$ as $T\to\infty$. 
\end{proof}

\subsection{Proof of Theorem \ref{thm:cpcf}}

Set $\tilde\alpha:=\alpha\wedge1$. 
Note that $\tilde\alpha\leq\beta_\alpha$. 
Since the left hand side of \eqref{eq:cpcf} is always bounded by 1, we may assume $Th^{\beta_\alpha+\eps}\geq1$ without loss of generality. 

Let us consider the following statistic:
\[
\tilde g_{h}(u):=\frac{n_1}{T\lambda_1}\frac{n_2}{T\lambda_2}\hat g_h(u)
=\frac{1}{T\lambda_1\lambda_2}\int_{(0,T]^2}K_h(y-x-u)N_1(dx)N_2(dy),\quad u\in\mathbb R.
\]
Observe that $\hat\theta_h$ is also a maximizer of $\tilde g_{h}(u)$ over $u\in[-r,r]$. 
Also, we can rewrite it as $\tilde g_{h}(u)=\sum_{j=0}^{T-1}X^0_j(u)$, where
\[
X^0_j(u)=\frac{1}{T\lambda_1\lambda_2}\int_{I_j\times(0,T]}K_h(y-x-u)N_1(dx)N_2(dy).
\]
We introduce an edge-corrected version of $X_j^0(u)$ as
\[
X_j(u)=\frac{1}{T\lambda_1\lambda_2}\int_{I_j\times\mathbb R}K_h(y-x-u)N_1(dx)N_2(dy).
\]
It is not difficult to see that $(X_j(u))_{j\in\mathbb Z}$ is stationary. 
We first show that replacing $X_j^0(u)$ by $X_j(u)$ does not matter for our argument. 
\begin{lemma}\label{cpcf-edge}
Assume \ref{ass:pp}. 
Assume also that $K$ is bounded and supported on $[-1,1]$. 
Then,
\[
\E\sbra{\sup_{u\in[-r,r]}\abs{\tilde g_{h}(u)-\sum_{j=0}^{T-1}X_j(u)}}
\lesssim\frac{1}{Th}.
\]
\end{lemma}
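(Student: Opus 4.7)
The plan is to exploit the compact support of $K$. Since $K$ vanishes outside $[-1,1]$ (and we may assume $h\leq 1$, the regime of interest), the integrand $K_h(y-x-u)$ appearing in $X_j^0(u)-X_j(u)$ is supported on $y\in[x+u-h,\,x+u+h]$, which for $x\in I_j$ and $u\in[-r,r]$ is contained in $(j-r_1,\,j+1+r_1]$. Combined with the additional constraint $y\notin(0,T]$ built into $X_j^0$, this forces either $j\leq r_1$ or $j\geq T-r_1-1$. Hence only $O(r_1)=O(1)$ boundary indices $j$ contribute to $\sum_{j=0}^{T-1}(X_j^0(u)-X_j(u))$, uniformly in $u$.

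For each such boundary index $j$, I would bound $K_h\leq\|K\|_\infty/h$ and use that the relevant values of $y$ lie in $I_j\oplus r_1$ to obtain the deterministic estimate
\[
\sup_{u\in[-r,r]}|X_j^0(u)-X_j(u)|
\leq \frac{\|K\|_\infty}{Th\lambda_1\lambda_2}\,N_1(I_j)\,N_2(I_j\oplus r_1).
\]
Taking expectations and applying Cauchy--Schwarz gives $\E[N_1(I_j)N_2(I_j\oplus r_1)]\leq\|N_1(I_j)\|_2\|N_2(I_j\oplus r_1)\|_2$. By stationarity and \ref{ass:pp}(i), $\|N_1(I_j)\|_2\leq B_2\lambda_1$, while covering $I_j\oplus r_1$ with at most $2r_1+1$ unit intervals and using the triangle inequality yields $\|N_2(I_j\oplus r_1)\|_2\leq(2r_1+1)B_2\lambda_2$.

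Summing over the $O(r_1)$ boundary indices delivers the stated bound $\lesssim 1/(Th)$, with the factors $\lambda_1\lambda_2$ cancelling cleanly so that the implicit constant depends only on $r$, $B_2$ and $\|K\|_\infty$, all of which are among the allowed dependencies of $\lesssim$. The argument is essentially edge-effect bookkeeping and I do not anticipate any genuine obstacle; the only point worth flagging is the cancellation of the intensities, which is necessary because the implicit constants in $\lesssim$ are not permitted to depend on $\lambda_1,\lambda_2$.
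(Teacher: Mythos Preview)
Your proposal is correct and follows essentially the same approach as the paper's proof: both identify that $X_j^0(u)=X_j(u)$ except for $O(r_1)$ boundary indices $j$, bound the remaining terms by $\frac{\|K\|_\infty}{Th\lambda_1\lambda_2}N_1(I_j)N_2(I_j\oplus r_1)$, and then invoke \ref{ass:pp}(i). Your write-up is slightly more explicit than the paper's (you spell out the Cauchy--Schwarz step and the cancellation of $\lambda_1\lambda_2$, which the paper leaves implicit), but the argument is the same.
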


\begin{proof}
Since $K_h$ is supported on $[-h,h]\subset[-1,1]$, $X_j^0(u)=X_j(u)$ if $r_1< j< T-1-r_1$ for any $u\in[-r,r]$. 
Therefore,
\ba{
\abs{\tilde g_{h}(u)-\sum_{j=0}^{T-1}X_j(u)}
\leq\frac{\|K\|_\infty}{Th\lambda_1\lambda_2}\sum_{0\leq j\leq r_1\text{ or }T-1-r_1\leq j\leq T-1}N_1(I_j)N_2(I_j\oplus r_1).
}
Hence, the desired result follows by \ref{ass:pp}(i).
\end{proof}

\if0
Next, Since $K_h$ is supported on $[-h,h]$, we can rewrite $X_j^0(u)$ as
\be{
X_j^0(u)=\frac{1}{T\lambda_1\lambda_2}\int_{I_j\times\mathbb R}1_{[-h,h]}(y-x-u)K_h(y-x-u)N_1(dx)N_2(dy).
}
Further, since $K$ is of bounded variation, we can find two non-decreasing functions $K_+$ and $K_-$ on $\mathbb R$ such that $K=K^+-K^-$ and $\sup_{u\in\mathbb R}|K^+(u)|\vee|K^-(u)|\leq\sup_{u\in\mathbb R}K(u)=K(0)$. 
Then, we set
\[
X_j^{\pm}(u)=\frac{1}{T\lambda_1\lambda_2}\int_{I_j\times\mathbb R}1_{[-h,h]}(y-x-u)K_h^{\pm}(y-x-u)N_1(dx)N_2(dy),
\]
so that $X_j^0(u)=X_j^+(u)-X_j^-(u)$. 
We also set
\[
\Delta_T^{\pm}(u):=\sum_{j=0}^{T-1}\cbra{X_j^{\pm}(u)-\E[X_j^{\pm}(u)]},\qquad u\in[-r,r].
\]
\begin{lemma}\label{lemma:lp}
Under the assumptions of \cref{thm:cpcf}, we have for any even integer $p>2$
\ba{
h^{1-\tilde\alpha}\sup_{u\in[-r,r]}\bra{\norm{\Delta_T^+(u)}_p\vee\norm{\Delta_T^-(u)}_p}\lesssim\frac{C_p}{\sqrt{Th^{\tilde\alpha+\eps/2}}}
}
if $Th^{p\eps/4}\leq 1$, where $C_p$ is a constant depending only on $p$. 
\end{lemma}
%
\begin{proof}
Fix $\sigma\in\{+,-\}$ and $u\in[-r,r]$. Observe that $X_j^\sigma(u)$ is $\sigma(N\cap (I_j\oplus r_1))$-measurable for every $j$. 
Also, since $|K^\sigma|\leq K(0)$, we obtain for any $q>1$
\ben{\label{x-mom}
\max_{0\leq j\leq T-1}\|X_j^\sigma(u)\|_{q}
=\|X_0^\sigma(u)\|_{q}\lesssim\frac{1}{Th\lambda_1\lambda_2}\|N_1(I_0)N_2(I_0\oplus r_1)\|_{q}\lesssim\frac{1}{Th}.
}
Therefore, applying \cref{lemma:mom} to $(X_j^\sigma(u))_{j=0}^{T-1}$ with $M=h^{-\eps/4}/(Th)$ and $\tau=\lfloor h^{-\eps/4}\rfloor$ gives
\bm{
\norm{\Delta_T^\sigma(u)}_p
\leq C_p\left\{
\bra{\frac{h^{-\eps/2}}{h}\E[|X_0^\sigma(u)|]+\frac{h^{-\eps/2}}{Th^{2}}\sum_{m=\lfloor h^{-\eps/4}\rfloor}^\infty\alpha^N_{p,p}(m;r_1)}^{1/2}\right.\\
\left.+\frac{T^{1/p}h^{-\eps/4}}{Th}\bra{\sum_{m=0}^\infty(m+1)^{p-2}\alpha^N_{p,p}(m;r_1)}^{1/p}\right\}
+2T\norm{X_0^\sigma(u)1_{\{|X_0^\sigma(u)|>h^{-\eps/4}/(Th)\}}}_p,
}
where $C_p$ is a constant depending only on $p$. 
By \ref{ass:pp}(ii),
\ba{
\sum_{m=\lfloor h^{-\eps/4}\rfloor}^\infty\alpha^N_{p,p}(m;r_1)\lesssim h^{\tilde\alpha}
\quad\text{and}\quad
\sum_{m=0}^\infty(m+1)^{p-2}\alpha^N_{p,p}(m;r_1)\lesssim1.
}
Also, by \eqref{x-mom},
\ba{
\norm{X_0^\sigma(u)1_{\{|X_0^\sigma(u)|>h^{-\eps/4}/(Th)\}}}_p
\leq\bra{\frac{Th}{h^{-\eps/4}}}^p(\E[X_0^\sigma(u)^{p^2+p}])^{1/p}
\lesssim\frac{h^{p\eps/4}}{Th}.
}
Since $Th^{p\eps/4}\leq 1$, we obtain
\ba{
h^{1-\tilde\alpha}\norm{\Delta_T^\sigma(u)}_p
\lesssim C_p\bra{
\sqrt{\frac{h^{1-\eps/2}}{h^{2\tilde\alpha}}\E[|X_0^\sigma(u)|]}
+\frac{1}{\sqrt{Th^{\tilde\alpha+\eps/2}}}
+\frac{1}{Th^{\tilde\alpha+\eps/2}}
+\frac{1}{Th^{\tilde\alpha}}
}.
}
Recall that we assume $Th^{\tilde\alpha+\eps/2}\geq Th^{\beta_\alpha+\eps}\geq1$. 
Therefore, the proof is complete once we show
\ben{\label{x-mean-est}
\E[|X_0^\sigma(u)|]\lesssim\frac{1}{Th^{1-\tilde\alpha}}.
}
Observe that \eqref{lambda-bound} holds under \ref{ass:cpcf}. 
Hence, Campbell's formula and \ref{kernel} give
\ben{\label{x-campbell}
\E[|X_0^\sigma(u)|]\leq\frac{1}{T}\int_{-1}^{1}|K^\sigma(t)|g(u+ht)dt
\lesssim\frac{1}{T}\int_{-1}^1\bra{1+|u-\theta^*+ht|^{\tilde\alpha-1}}dt.
}
Hence, \eqref{power-bound} gives \eqref{x-mean-est}.
\end{proof}

\begin{lemma}\label{lemma:vc}
Assume \ref{ass:pp} and \ref{kernel}. 
For any $\sigma\in\{+,-\}$ and $\delta>0$, there exist finite points $-r=u_0\leq u_1\leq\dots\leq u_N=r$ and a universal constant $d>0$ such that $N\lesssim \delta^{-d}$ and
\[
\sup_{u\in[-r,r]}\min_{0\leq a\leq N-1}\int_{-r_1}^{r_1}\cbra{K^\sigma_h(z-u_{a+1})-K^\sigma_h(v-u_{a})}g(v)dv\leq2\delta\frac{K(0)}{h}\int_{-r_1}^{r_1}g(u)du.
\]
\end{lemma}

\begin{proof} 
Without loss of generality, we may assume $\int_{-r_1}^{r_1}g(u)du>0$ since otherwise the asserted claim is trivial. 

By the proof of \cite[Proposition 3.6.12]{gine2016mathematical}, $\mcl K:=\{K^\sigma_h(\cdot - u):u\in[-r,r]\}$ is a VC subgraph class of functions. 
Also, $\mcl K$ admits an envelope function $F\equiv K(0)/h$. 
Therefore, by \cite[Theorem 3.3.9]{gine2016mathematical}, there exist finite points $-r=s_0<s_1<\dots<s_L=r$ and a universal constant $d>0$ such that $L\lesssim \delta^{-d}$ and 
\ben{\label{eq:vc}
\sup_{u\in[-r,r]}\min_{0\leq a\leq L}\int_{\mathbb R}\abs{K^\sigma_h(v-u)-K^\sigma_h(v-s_a)}Q(dv)\leq\delta\frac{K(0)}{h},
}
where $Q$ is a probability measure on $(\mathbb R,\mcl B(\mathbb R))$ defined as $Q(A)=\int_{A\cap[-r_1,r_1]}g(u)du/\int_{-r_1}^{r_1}g(u)du$ for $A\in\mcl B(\mathbb R)$. 
Next, define a function $\psi:\mathbb R\to\mathbb R$ as 
\[
\psi(u)=\int_{-r_1}^{r_1}K^\sigma_h(v-u)g(v)dv
=\int_{\mathbb R}K^\sigma(t)g(u+ht)1_{[-r_1,r_1]}(u+ht)dt,\qquad u\in\mathbb R.
\]
Then, \eqref{eq:vc} gives
\[
\sup_{u\in[-r,r]}\min_{0\leq a\leq L}|\psi(u)-\psi(s_a)|\leq\delta\frac{K(0)}{h}\int_{-r_1}^{r_1}g(u)du.
\]
Also, since $g$ is non-negative and $K^\sigma$ is non-decreasing, $\psi$ is also non-decreasing. 
Moreover, since $g1_{[-r_1,r_1]}\in L^1(\mathbb R)$ and $K^\sigma_h$ is bounded, $\psi$ is continuous by the continuity of $\mathbb R\ni w\mapsto (g1_{-r_1,r_1]})(\cdot+w)\in L^1(\mathbb R)$ (see e.g.~\cite[Lemma 4.3]{brezis2011functional}). 
Consequently for every $a=0,\dots,L-1$, there exists a point $t_a\in[s_a,s_{a+1}]$ such that $\psi(t_a)-\psi(s_a)=\{\psi(s_{a+1})-\psi(s_a)\}/2$ by the intermediate value theorem. 
Observe that $\psi(s_{a+1})-\psi(t_a)=\{\psi(s_{a+1})-\psi(s_a)\}/2$. 
Moreover, since $\psi$ is non-decreasing,
\[
\min_{0\leq a'\leq L}|\psi(t_a)-\psi(s_{a'})|
=\{\psi(t_a)-\psi(s_{a})\}\wedge\{\psi(s_{a+1})-\psi(t_{a})\}
=\frac{\psi(s_{a+1})-\psi(s_a)}{2}.
\]
Therefore, we obtain desired points by setting $u_{2a}=s_a$ and $u_{2a+1}=t_a$ for $a=0,\dots,L-1$ and $u_{2L}=s_{L}$.  
\end{proof}
\fi

Next, set
\[
\Delta_T(u):=\sum_{j=0}^{T-1}\cbra{X_j(u)-\E[X_j(u)]},\qquad u\in[-r,r].
\]
Our next aim is to establish a sufficiently fast convergence of $\sup_{u\in[-r,r]}|\Delta_T(u)|$. 
We first develop pointwise moment bounds.
\begin{lemma}\label{lemma:lp}
Assume \ref{ass:pp} and \ref{ass:cpcf}. 
Assume also that $K$ is bounded and supported on $[-1,1]$. 
If an even integer $p>2$ satisfies $Th^{p\eps/4}\leq 1$, then
\ba{
h^{1-\tilde\alpha}\sup_{u\in[-r,r]}\norm{\Delta_T(u)}_p\lesssim\frac{C_p}{\sqrt{Th^{\tilde\alpha+\eps/2}}},
}
where $C_p$ is a constant depending only on $p$. 
\end{lemma}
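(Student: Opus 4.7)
The plan is to apply the dependent-sum moment inequality of \cref{lemma:mom} to the triangular array $(X_j(u))_{j=0}^{T-1}$, pointwise in $u\in[-r,r]$, after assembling three ingredients. First, since $K_h$ is supported on $[-h,h]\subset[-1,1]$ and $u\in[-r,r]$, the integrand defining $X_j(u)$ vanishes outside $I_j\times(I_j\oplus r_1)$, so $X_j(u)$ is $\sigma(N\cap(I_j\oplus r_1))$-measurable. Using $\|K_h\|_\infty\leq\|K\|_\infty/h$ yields the pointwise domination $|X_j(u)|\leq \|K\|_\infty N_1(I_j)N_2(I_j\oplus r_1)/(Th\lambda_1\lambda_2)$, which combined with Cauchy--Schwarz and \ref{ass:pp}(i) (applied by covering $I_j\oplus r_1$ by a fixed number of unit intervals) gives $\max_j\|X_j(u)\|_q\lesssim 1/(Th)$ for every $q\geq 1$.

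Second, to bound the first-moment term in \cref{lemma:mom}, we pass to $|K_h|$ inside $X_j(u)$ and apply Campbell's formula, yielding $\E[|X_j(u)|]\leq T^{-1}\int|K_h(v-u)|g(v)\,dv$. Under \ref{ass:cpcf}, the domination \eqref{g-bound} holds with $\tilde\alpha=\alpha\wedge 1$ and an $L^{1/(1-\tilde\alpha)}$ auxiliary function (trivially $g_0\equiv 1$ in case \ref{type-I}, and via $\alpha_0>\alpha$ together with the inclusion $L^{1/(1-\alpha_0)}\subset L^{1/(1-\alpha)}$ on $[-r_1,r_1]$ in case \ref{type-II}). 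Thus \cref{fh-bound} applied to $|K|$ delivers $\E[|X_j(u)|]\lesssim h^{\tilde\alpha-1}/T$ uniformly in $u$.

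Third, invoke \cref{lemma:mom} with truncation level $M:=h^{-\eps/4}/(Th)$ and gap $\tau:=\lfloor h^{-\eps/4}\rfloor$. Using \ref{ass:pp}(ii) with a sufficiently large polynomial exponent, the two mixing sums satisfy $\sum_{m\geq\tau}\alpha^N_{p,p}(m;r_1)\lesssim h^{\tilde\alpha}$ and $\sum_{m\geq 0}(m+1)^{p-2}\alpha^N_{p,p}(m;r_1)\lesssim 1$. The truncation tail is handled by Markov's inequality and the $L^{p^2+p}$ estimate from step one, producing $\|X_0(u)1_{\{|X_0(u)|>M\}}\|_p\lesssim h^{p\eps/4}/(Th)$; the standing hypothesis $Th^{p\eps/4}\leq 1$ then makes the associated $2T$-prefactored contribution $\lesssim 1/(Th)$. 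Substituting these four estimates into \cref{lemma:mom} and multiplying by $h^{1-\tilde\alpha}$ collapses, after routine arithmetic on powers of $h$ and $T$, to the asserted rate $C_p/\sqrt{Th^{\tilde\alpha+\eps/2}}$.

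The delicate point is the calibration of $M$ and $\tau$: both must be large enough that the mixing tail and the truncation residual are negligible, yet small enough that the variance-type term $(T\tau M\max_j\E[|X_j(u)|])^{1/2}$ matches the target rate after scaling by $h^{1-\tilde\alpha}$. The choice $\tau M\sim h^{-\eps/2}/(Th)$ achieves this balance, and the constraint $Th^{p\eps/4}\leq 1$ (necessarily forcing $p$ to be large in terms of $\eps$) is exactly what is required to absorb both the $T^{1/p}$-prefactored mixing term and the truncation remainder. This dependence of $p$ on $\eps$ is what dictates the large choice of $p$ in the subsequent passage to a uniform-in-$u$ bound via a covering argument.
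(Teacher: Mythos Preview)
Your proposal is correct and follows essentially the same approach as the paper's proof: you identify the same measurability and moment estimates for $X_j(u)$, invoke \cref{lemma:mom} with the identical choices $M=h^{-\eps/4}/(Th)$ and $\tau=\lfloor h^{-\eps/4}\rfloor$, control the first moment via Campbell's formula and \cref{fh-bound}, and handle the mixing sums and truncation tail exactly as the paper does. Your treatment is arguably slightly more careful in one respect---applying \cref{fh-bound} to $|K|$ rather than $K$ and explicitly noting the inclusion $L^{1/(1-\alpha_0)}\subset L^{1/(1-\alpha)}$ on $[-r_1,r_1]$---but the structure and all key estimates coincide with the paper.
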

%
\begin{proof}
Fix $u\in[-r,r]$. 
Since $K_h$ is supported on $[-h,h]\subset[-1,1]$, we can rewrite $X_j(u)$ as
\ben{\label{eq:xj}
X_j(u)=\frac{1}{T\lambda_1\lambda_2}\int_{I_j\times(I_j\oplus r_1)}K_h(y-x-u)N_1(dx)N_2(dy).
}
Hence, $X_j(u)$ is $\sigma(N\cap (I_j\oplus r_1))$-measurable for every $j$. 
Also, since $\|K_h\|_\infty\leq h^{-1}\|K\|_\infty$, we obtain for any $q>1$
\ben{\label{x-mom}
\max_{0\leq j\leq T-1}\|X_j(u)\|_{q}
=\|X_0(u)\|_{q}\lesssim\frac{1}{Th\lambda_1\lambda_2}\|N_1(I_0)N_2(I_0\oplus r_1)\|_{q}\lesssim\frac{1}{Th}.
}
Therefore, applying \cref{lemma:mom} to $(X_j(u))_{j=0}^{T-1}$ with $M=h^{-\eps/4}/(Th)$ and $\tau=\lfloor h^{-\eps/4}\rfloor$ gives
\bm{
\norm{\Delta_T(u)}_p
\leq C_p\left\{
\bra{\frac{h^{-\eps/2}}{h}\E[|X_0(u)|]+\frac{h^{-\eps/2}}{Th^{2}}\sum_{m=\lfloor h^{-\eps/4}\rfloor}^\infty\alpha^N_{p,p}(m;r_1)}^{1/2}\right.\\
\left.+\frac{T^{1/p}h^{-\eps/4}}{Th}\bra{\sum_{m=0}^\infty(m+1)^{p-2}\alpha^N_{p,p}(m;r_1)}^{1/p}\right\}
+2T\norm{X_0(u)1_{\{X_0(u)>h^{-\eps/4}/(Th)\}}}_p,
}
where $C_p$ is a constant depending only on $p$. 
By \ref{ass:pp}(ii),
\ba{
\sum_{m=\lfloor h^{-\eps/4}\rfloor}^\infty\alpha^N_{p,p}(m;r_1)\lesssim h^{\tilde\alpha}
\quad\text{and}\quad
\sum_{m=0}^\infty(m+1)^{p-2}\alpha^N_{p,p}(m;r_1)\lesssim1.
}
Also, since \eqref{g-bound} holds under \ref{ass:cpcf}, Campbell's formula and \cref{fh-bound} give
\ben{\label{x-campbell}
\E[X_0(u)]=\frac{1}{T}\int_{-1}^{1}K(t)g(u+ht)dt
\lesssim\frac{1}{T}\int_{-1}^1\bra{1+|u-\theta^*+ht|^{\tilde\alpha-1}}dt
\lesssim\frac{1}{Th^{1-\tilde\alpha}}.
}
Moreover, by \eqref{x-mom},
\ba{
\norm{X_0(u)1_{\{X_0(u)>h^{-\eps/4}/(Th)\}}}_p
\leq\bra{\frac{Th}{h^{-\eps/4}}}^p(\E[X_0(u)^{p^2+p}])^{1/p}
\lesssim\frac{h^{p\eps/4}}{Th}.
}
Since $Th^{p\eps/4}\leq 1$, we obtain
\ba{
h^{1-\tilde\alpha}\norm{\Delta_T(u)}_p
\lesssim C_p\bra{
\frac{1}{\sqrt{Th^{\tilde\alpha+\eps/2}}}
+\frac{1}{Th^{\tilde\alpha+\eps/2}}
}.
}
Since we assume $Th^{\tilde\alpha+\eps/2}\geq Th^{\beta_\alpha+\eps}\geq1$, this gives the desired result.  
\end{proof}

To upgrade \cref{lemma:lp} to a moment bound for $\sup_{u\in[-r,r]}|\Delta_T(u)|$, we need the following technical lemma. 
\begin{lemma}\label{lemma:vc}
Let $F$ be a bounded non-decreasing function on $\mathbb R$. 
For any $h,\rho>0$, there exist finite points $-r=u_0\leq u_1\leq\dots\leq u_N=r$ and universal constants $C,d\geq1$ such that $N\leq C\rho^{-d}$ and
\ben{\label{k-covering}
\sup_{u\in[-r,r]}\min_{0\leq a\leq N-1}\int_{-r_1}^{r_1}\cbra{F_h(v-u_{a})-F_h(v-u_{a+1})}g(v)dv\leq2\rho\frac{\|F\|_\infty}{h}\int_{-r_1}^{r_1}g(u)du.
}
\end{lemma}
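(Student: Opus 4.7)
The plan is to reduce the claim to a one-dimensional monotone-function argument for
\[
\psi(u) := \int_{-r_1}^{r_1} F_h(v-u) g(v)\,dv, \qquad u \in [-r,r].
\]
Set $Q := \int_{-r_1}^{r_1} g(v)\,dv$, which I may assume finite (otherwise the claimed bound is vacuous). Since $F$ is non-decreasing, $u \mapsto F_h(v-u)$ is non-increasing for every fixed $v$, hence $\psi$ is non-increasing on $[-r,r]$. The pointwise estimate $|F_h(v-u)| \leq \|F\|_\infty/h$ gives $|\psi(u)| \leq M := \|F\|_\infty Q/h$, so that the total decrease $\psi(-r)-\psi(r)$ is at most $2M$.

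The one delicate step is showing that $\psi$ is continuous on $[-r,r]$, which must be addressed because $F$ (and hence $F_h$) may have jump discontinuities. Writing $\psi$ as (a reflection of) the convolution of the bounded function $F_h$ with the $L^1$ function $v \mapsto g(v)\mathbf{1}_{[-r_1,r_1]}(v)$, this follows from the $L^1$-continuity of translations: for $u,u' \in [-r,r]$,
\[
|\psi(u)-\psi(u')| \leq \frac{\|F\|_\infty}{h}\int_{\mathbb R}\bigl|g(v)\mathbf{1}_{[-r_1,r_1]}(v) - g(v+u'-u)\mathbf{1}_{[-r_1,r_1]}(v+u'-u)\bigr|\,dv \longrightarrow 0
\]
as $u' \to u$.

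With continuity and monotonicity in hand, I would construct the partition greedily: put $u_0 := -r$, and for each $a$ either terminate with $u_N := r$ once $\psi(u_a)-\psi(r)\leq 2\rho M$, or else set $u_{a+1}$ to be the smallest $u \in (u_a,r]$ with $\psi(u_a)-\psi(u) = 2\rho M$ (which exists by the intermediate value theorem). Each non-terminal step decreases $\psi$ by exactly $2\rho M$ while the total drop is at most $2M$, so the procedure terminates after $N \leq \lceil\rho^{-1}\rceil + 1 \leq C\rho^{-d}$ steps, with $d = 1$ and a universal constant $C$. To conclude, note that the integrand on the left-hand side of \eqref{k-covering} equals $\psi(u_a)-\psi(u_{a+1})$ and therefore does not actually depend on $u$; the $\sup_u\min_a$ collapses to $\max_a[\psi(u_a)-\psi(u_{a+1})]$, which by construction is at most $2\rho M = 2\rho\|F\|_\infty Q/h$, as required. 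The main obstacle in this plan is the continuity step above; everything else is the standard observation that the bracketing entropy of a bounded monotone class is linear in $\rho^{-1}$.
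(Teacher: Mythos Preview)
Your proof is correct and takes a more elementary route than the paper. The paper invokes VC theory: it observes that translates of a fixed monotone function form a VC subgraph class (via Proposition~3.6.12 in Gin\'e--Nickl), applies the universal entropy bound (Theorem~3.3.9 ibidem) to obtain points $s_0<\dots<s_L$ with $L\le C\rho^{-d}$ and $\sup_u\min_a|\psi(u)-\psi(s_a)|\le\rho\|F\|_\infty Q/h$, and then bisects each $[s_a,s_{a+1}]$ at the $\psi$-midpoint (using the same continuity argument you give) to control the consecutive differences $\psi(u_a)-\psi(u_{a+1})$. Your greedy construction bypasses the VC machinery entirely: once $\psi$ is known to be continuous, non-increasing, and of total variation at most $2\|F\|_\infty Q/h$, slicing its range into pieces of size $2\rho\|F\|_\infty Q/h$ is immediate and yields the sharper exponent $d=1$. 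Both approaches hinge on the same continuity step, proved via $L^1$-continuity of translations.

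One small slip: since the integrand in \eqref{k-covering} does not involve $u$, the $\sup_u\min_a$ literally collapses to $\min_a$, not $\max_a$. This is harmless for you, because your construction in fact controls $\max_{0\le a\le N-1}[\psi(u_a)-\psi(u_{a+1})]$, which is both stronger than the stated inequality and precisely what is used downstream: in the proof of the uniform deviation lemma, for each $u$ one selects the index $a$ with $u_a\le u\le u_{a+1}$ and needs that particular difference $\psi(u_a)-\psi(u_{a+1})$ to be small.
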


\begin{proof} 
Without loss of generality, we may assume $\int_{-r_1}^{r_1}g(u)du>0$ since otherwise the asserted claim is trivial. 

By the proof of \cite[Proposition 3.6.12]{gine2016mathematical}, $\mcl G:=\{F_h(\cdot - u):u\in[-r,r]\}$ is a VC subgraph class of functions. 
Also, $\mcl G$ admits an envelope $\|F\|_\infty/h$. 
Therefore, by \cite[Theorem 3.3.9]{gine2016mathematical}, there exist finite points $-r=s_0<s_1<\dots<s_L=r$ and universal constants $C,d\geq1$ such that $L\leq C\rho^{-d}$ and 
\ben{\label{eq:vc}
\sup_{u\in[-r,r]}\min_{0\leq a\leq L}\int_{\mathbb R}\abs{F_h(v-u)-F_h(v-s_a)}Q(dv)\leq\rho\frac{\|F\|_\infty}{h},
}
where $Q$ is a probability measure on $(\mathbb R,\mcl B(\mathbb R))$ defined as
\[
Q(A)=\frac{\int_{A\cap[-r_1,r_1]}g(u)du}{\int_{-r_1}^{r_1}g(u)du},\qquad A\in\mcl B(\mathbb R).
\]
Next, define a function $\psi:\mathbb R\to\mathbb R$ as 
\[
\psi(u)=\int_{-r_1}^{r_1}F_h(v-u)g(v)dv
=\int_{\mathbb R}F(t)g(u+ht)1_{[-r_1,r_1]}(u+ht)dt,\qquad u\in\mathbb R.
\]
Then, \eqref{eq:vc} gives
\[
\sup_{u\in[-r,r]}\min_{0\leq a\leq L}|\psi(u)-\psi(s_a)|\leq\rho\frac{\|F\|_\infty}{h}\int_{-r_1}^{r_1}g(u)du.
\]
Also, since $g$ is non-negative and $F$ is non-decreasing, $\psi$ is non-increasing. 
Moreover, since $g1_{[-r_1,r_1]}\in L^1(\mathbb R)$ and $F$ is bounded, $\psi$ is continuous (see e.g.~\cite[Lemma 1.8.1]{malliavin1995integration}).
Consequently, for every $a=0,\dots,L-1$, there exists a point $t_a\in[s_a,s_{a+1}]$ such that $\psi(t_a)=\{\psi(s_{a})+\psi(s_{a+1})\}/2$ by the intermediate value theorem. 
Observe that $\psi(s_{a})-\psi(t_a)=\psi(t_a)-\psi(s_{a+1})=\{\psi(s_{a})-\psi(s_{a+1})\}/2$. 
Moreover, since $\psi$ is non-increasing,
\[
\min_{0\leq a'\leq L}|\psi(t_a)-\psi(s_{a'})|
=\{\psi(s_a)-\psi(t_{a})\}\wedge\{\psi(t_{a})-\psi(s_{a+1})\}
=\frac{\psi(s_{a})-\psi(s_{a+1})}{2}.
\]
Therefore, we obtain the desired points by setting $u_{2a}=s_a$ and $u_{2a+1}=t_a$ for $a=0,\dots,L-1$ and $u_{2L}=s_{L}$.  
\end{proof}

Combining the previous two lemmas, we can derive the following uniform moment bound for $\Delta_T(u)$: 
\begin{lemma}\label{lemma:ucp}
Assume \ref{ass:pp} and \ref{ass:cpcf}. 
Assume also that $K$ is of bounded variation and supported on $[-1,1]$. 
If $h\leq \min\{T^{-\eta},\,1/2\}$ for some $\eta>0$, then
\ba{
\E\sbra{h^{1-\tilde\alpha}\sup_{u\in[-r,r]}\abs{\Delta_T(u)}}\lesssim\frac{C_\eta}{\sqrt{Th^{\tilde\alpha+\eps}}},
}
where $C_\eta$ depends only on $\eta$. 
\end{lemma}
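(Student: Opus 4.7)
The plan is to promote the pointwise moment bound of \cref{lemma:lp} to a uniform-in-$u$ bound by combining it with the bracketing scheme of \cref{lemma:vc}. Since $K$ is of bounded variation, I would decompose $K=K^+-K^-$ with $K^\pm$ bounded and non-decreasing on $\mathbb R$ (with $\|K^\pm\|_\infty$ controlled by the total variation of $K$, a quantity absorbed by $\lesssim$), and correspondingly split $\Delta_T(u)=\Delta_T^+(u)-\Delta_T^-(u)$, where $\Delta_T^\sigma$ is obtained by replacing $K$ with $K^\sigma$ in the formula for $\Delta_T$ (using the edge-corrected representation of $X_j$ from the proof of \cref{lemma:lp}, so that each component is a well-defined integral over a bounded region). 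The pointwise bound of \cref{lemma:lp} carries over to each $\Delta_T^\sigma$ since its proof only invokes boundedness of the kernel and \cref{fh-bound}, both of which apply to $K^\sigma$.

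For each $\sigma\in\{+,-\}$, apply \cref{lemma:vc} with $F=K^\sigma$ and $\rho=h^A$ (for a constant $A>0$ to be chosen later) to get a partition $-r=u_0\leq\dots\leq u_N=r$ with $N\lesssim h^{-Ad}$ such that, via Campbell's formula together with $\int_{-r_1}^{r_1}g<\infty$ (which holds under \ref{ass:cpcf}),
\[
T\abs{\E[X_0^\sigma(u_a)-X_0^\sigma(u_{a+1})]}=\int_{-r_1}^{r_1}\{K_h^\sigma(v-u_a)-K_h^\sigma(v-u_{a+1})\}g(v)dv\lesssim h^{A-1}.
\]
Since $u\mapsto\sum_j X_j^\sigma(u)$ and $u\mapsto\sum_j\E[X_j^\sigma(u)]$ are both monotone in $u$ (by monotonicity of $K^\sigma$), a standard bracketing argument gives, for $u\in[u_a,u_{a+1}]$,
\[
|\Delta_T^\sigma(u)|\leq\max_{0\leq b\leq N}|\Delta_T^\sigma(u_b)|+T\abs{\E[X_0^\sigma(u_a)-X_0^\sigma(u_{a+1})]},
\]
so that $\sup_u|\Delta_T^\sigma(u)|\lesssim\max_b|\Delta_T^\sigma(u_b)|+h^{A-1}$. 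Jensen's inequality and \cref{lemma:lp} then yield, for any even integer $p$ with $Th^{p\eps/4}\leq 1$,
\[
\E\sbra{\max_a|\Delta_T^\sigma(u_a)|}\leq N^{1/p}\max_a\|\Delta_T^\sigma(u_a)\|_p\lesssim\frac{C_p h^{\tilde\alpha-1-Ad/p}}{\sqrt{Th^{\tilde\alpha+\eps/2}}}.
\]

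Combining these estimates and multiplying by $h^{1-\tilde\alpha}$, the target reduces to
\[
\frac{C_p h^{-Ad/p}}{\sqrt{Th^{\tilde\alpha+\eps/2}}}+h^{A-\tilde\alpha}\lesssim\frac{C_\eta}{\sqrt{Th^{\tilde\alpha+\eps}}}.
\]
The first summand satisfies this as soon as $Ad/p\leq\eps/4$; the second, using $h\leq T^{-\eta}$, as soon as $A\geq 1/(2\eta)+\tilde\alpha/2-\eps/2$. Fix $A$ to satisfy the second condition and then pick an even integer $p$ with $p\geq\max\{4Ad/\eps,\,4/(\eps\eta)\}$; both depend only on $\eta$ (and on $\tilde\alpha,\eps,d$ absorbed by $\lesssim$), which gives the desired bound with a constant depending only on $\eta$.

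The main obstacle I anticipate is the two-parameter bookkeeping in this chaining-plus-union argument: the discretization scale $\rho=h^A$ must be small enough that the bracketing error $h^{A-1}$ is negligible compared with $1/\sqrt{Th^{\tilde\alpha+\eps}}$, while the moment order $p$ must be large enough that the union-bound factor $N^{1/p}=h^{-Ad/p}$ stays within an $h^{-\eps/4}$ tolerance. Reconciling these demands relies on the hypothesis $h\leq T^{-\eta}$, which caps the relevant polynomial powers of $h$ in terms of $T$ and permits $A$ and $p$ to be chosen as finite constants depending only on $\eta$. Beyond this calibration, the argument is a textbook combination of monotone-decomposition bracketing with the pointwise moment bound of \cref{lemma:lp}.
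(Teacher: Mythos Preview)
Your bracketing argument has a genuine gap: the Jordan decomposition $K=K^+-K^-$ with $K^\pm$ non-decreasing on $\mathbb R$ forces each $K^\sigma$ to carry a nonzero constant tail on $(1,\infty)$ (equal to the total positive or negative variation of $K$), so $K^\sigma$ is \emph{not} supported on $[-1,1]$. Consequently, your claim that \cref{lemma:lp} and \cref{fh-bound} carry over to $K^\sigma$ is wrong exactly at the first-moment step \eqref{x-campbell}. Working on the compact domain $I_j\times(I_j\oplus r_1)$, Campbell's formula for $X_0^\sigma(u)$ now contains a tail contribution
\[
\frac{K^\sigma(\infty)}{Th\lambda_1\lambda_2}\int_{I_0}\int_{(I_0\oplus r_1)\cap\{y>x+u+h\}}\lambda_{12}(y-x)\,dy\,dx\;\asymp\;\frac{1}{Th},
\]
which dominates the desired $1/(Th^{1-\tilde\alpha})$ bound both when $\tilde\alpha<1$ and when $\tilde\alpha=1$. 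Feeding $\E[|X_0^\sigma(u)|]\asymp 1/(Th)$ through \cref{lemma:mom} yields only $h^{1-\tilde\alpha}\|\Delta_T^\sigma(u)\|_p\lesssim 1/\sqrt{Th^{2\tilde\alpha+\eps/2}}$, and with this input your final calibration cannot be satisfied for any $\eps<\tilde\alpha$. (The tails of $K^+$ and $K^-$ do cancel in $\Delta_T=\Delta_T^+-\Delta_T^-$, but you bound each $\sup_u|\Delta_T^\sigma(u)|$ separately; and subtracting the tail from $K^\sigma$ destroys the monotonicity on which your bracket relies.)

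The paper resolves this by writing $K=F\cdot 1_{[-1,1]}$ with $F$ non-decreasing and keeping the indicator explicit. The bracket is then built from the compactly supported kernel $t\mapsto 1_{[-h,h]\oplus\rho}(t)F_h(t)$ with $\rho=h/T^{1/\tilde\alpha}$: the $\rho$-enlarged indicator absorbs the grid shift while monotonicity of $F$ alone gives $X_j(u)\leq\tilde X_j(u_a)$ for $u\in[u_a,u_{a+1}]$, provided the mesh of $(u_a)$ is at most $\rho$. The bracket error $\E[\tilde X_j(u_a)]-\E[X_j(u)]$ is controlled by combining \cref{lemma:vc} (for the $F$-part) with a direct Young-inequality estimate on the indicator enlargement. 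Crucially, the bracketing kernel is still supported on $[-2h,2h]$, so \cref{lemma:lp} applies to the bracketing process (with the rescaled kernel $\tilde K=2F(2\cdot)1_{[-1,1]}$) and the first-moment bound \eqref{x-campbell} survives intact.
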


\begin{proof}
Since $K$ is of bounded variation and supported on $[-1,1]$, there exist two non-decreasing functions $F_1,F_2$ on $\mathbb R$ such that $K=(F_1-F_2)1_{[-1,1]}$ and $|F_1|\vee|F_2|\leq\|K\|_\infty$. 
Therefore, without loss of generality, we may assume that $K$ is of the form $K=F1_{[-1,1]}$ with $F$ a non-decreasing function on $\mathbb R$. 
In the remainder of the proof, we proceed in two steps. 

\paragraph{Step 1.}
For $\rho=h/T^{1/\tilde\alpha}$, let $-r=u_0\leq u_1\leq\dots\leq u_N=r$ and $d\geq1$ be as in \cref{lemma:vc}. 
Inserting the equi-spaced points $-r+k\rho$ $(k=1,\dots,\lfloor 2r/\rho\rfloor)$ into the sequence $(u_a)_{a=0}^N$ if necessary, we may assume $\max_{0\leq a\leq N-1}(u_{a+1}-u_a)\leq\rho$ while \eqref{k-covering} still holds. Note that this operation increases the number of points at most $\lfloor 2r/\rho\rfloor$, so we have $N\lesssim\rho^{-d}$. 

For each $u\in[-r,r]$, set
\[
\wt X_j(u):=\frac{1}{T\lambda_1\lambda_2}\int_{I_j\times\mathbb R}1_{[-h,h]\oplus\rho}(y-x-u)F_h(y-x-u)N_1(dx)N_2(dy),\quad j=0,1,\dots,T-1
\]
and
\[
\wt\Delta_T(u):=\sum_{j=0}^{T-1}\cbra{\wt X_j(u)-\E[\wt X_j(u)]}.
\]
In Step 2, we will show
\ben{\label{mono-arg}
\E\sbra{h^{1-\tilde\alpha}\sup_{u\in[-r,r]}|\Delta_T(u)|}
\lesssim \E\sbra{h^{1-\tilde\alpha}\max_{0\leq a\leq N}|\wt\Delta_T(u_a)|}+\frac{1}{\sqrt{Th^{\tilde\alpha}}}.
}
Given this estimate, we can prove the claim of the lemma as follows. 
Since $N\lesssim \rho^{-d}\leq h^{-d\{1+1/(\tilde\alpha\eta)\}}$, we have $N^{1/p}\lesssim h^{-\eps/4}$ for a sufficiently large even integer $p$ depending only on $\alpha$ and $\eta$. 
Observe that Jensen's inequality gives
\ba{
\E\sbra{h^{1-\tilde\alpha}\max_{0\leq a\leq N}\abs{\wt\Delta_T(u_a)}}
&\leq h^{1-\tilde\alpha}\bra{\E\sbra{\max_{0\leq a\leq N}\abs{\wt\Delta_T(u_a)}^p}}^{1/p}
\leq (N+1)^{1/p}h^{1-\tilde\alpha}\max_{0\leq a\leq N}\norm{\wt\Delta_T(u_a)}_p.
}
Meanwhile, define a function $\wt K:\mathbb R\to\mathbb R$ as $\wt K(u)=2F(2u)1_{[-1,1]\oplus T^{-1/\tilde\alpha}}(2u)$ for $u\in\mathbb R$. Observe that $\wt K$ is supported on $[-1,1]$ and bounded by $2\|K\|_\infty$. 
Moreover, we can rewrite $\wt X_j(u)$ as
\[
\wt X_j(u)=\frac{1}{T\lambda_1\lambda_2}\int_{I_j\times\mathbb R}\tilde K_{2h}(y-x-u)N_1(dx)N_2(dy).
\]
Therefore, we can apply \cref{lemma:lp} to $\wt\Delta_T(u)$ and thus obtain 
\ba{
\E\sbra{h^{1-\tilde\alpha}\max_{0\leq a\leq N}\abs{\wt\Delta_T(u_a)}}
\lesssim\frac{(N+1)^{1/p}}{\sqrt{Th^{\tilde\alpha+\eps/2}}}
\lesssim\frac{1}{\sqrt{Th^{\tilde\alpha+\eps}}}.
}
Combining this with \eqref{mono-arg} gives the claim of the lemma. 

\paragraph{Step 2.}
It remains to prove \eqref{mono-arg}. 
Fix $u\in[-r,r]$. We can find an index $0\leq a<N$ such that $u_a\leq u\leq u_{a+1}$. 
Since $u-u_a\leq u_{a+1}-u_a\leq\rho$ and $F$ is non-decreasing, we have
\ba{
X_j(u)&=\frac{1}{T\lambda_1\lambda_2}\int_{I_j\times\mathbb R}1_{[-h,h]}(y-x-u)F_h(y-x-u)N_1(dx)N_2(dy)\leq\wt X_j(u_a)
}
for all $j$. 
Campbell's formula gives
\ba{
\E[\wt X_j(u_a)]
&=\frac{1}{T}\int_{\mathbb R}1_{[-h,h]\oplus\rho}(v-u_a)K_h(v-u_a)g(v)dv\\
&=\frac{1}{T}\int_{-r_1}^{r_1}1_{[-h,h]\oplus\rho}(v-u_a)K_h(v-u_a)g(v)dv,
}
where the second equality follows from $u_a\in[-r,r]$ and $h+\rho\leq2h\leq1$. 
Observe that $1_{[-h,h]\oplus\rho}(v-u_a)=1_{J_{h,u}}(v-u)$ with $J_{h,u}:=([-h,h]\oplus\rho)+(u_a-u)$. 
Note that $J_{h,u}\supset[-h,h]$ because $u-u_a\leq\rho$. 
Hence, 
\ba{
\abs{\E[\wt X_j(u_a)]-\frac{1}{T}\int_{-r_1}^{r_1}1_{[-h,h]}(v-u)K_h(v-u_a)g(v)dv}
&\leq\frac{\|K\|_\infty}{Th}\int_{-r_1}^{r_1}1_{J_{h,u}\setminus[-h,h]}(v-u)g(v)dv.
}
Noting that $\|g_0\|_{L^{1/(1-\tilde\alpha/2)}}\lesssim1$ by Jensen's inequality (if $\alpha<1$) and $\int_{-r_1}^{r_1}|u-\theta^*|^{\frac{\alpha-1}{1-\tilde\alpha/2}}du\lesssim1$ because $\frac{\alpha-1}{1-\tilde\alpha/2}>-1$, we have $\|g\|_{L^{1/(1-\tilde\alpha/2)}}\lesssim1$ by \ref{ass:cpcf}. 
Then, since $\leb(J_{h,u}\setminus[-h,h])\lesssim\rho$, Young's inequality gives
\ba{
\abs{\E[\wt X_j(u_a)]-\frac{1}{T}\int_{-r_1}^{r_1}1_{[-h,h]}(v-u)K_h(v-u_a)g(v)dv}
\lesssim \frac{\rho^{\tilde\alpha/2}}{Th}.
}
Meanwhile, by Campbell's formula again,
\ba{
\E[X_j(u)]
&=\frac{1}{T}\int_{\mathbb R}1_{[-h,h]}(v-u)K_h(v-u)g(v)dv.
}
Therefore, \eqref{k-covering} gives
\ba{
\abs{\E[X_j(u)]-\frac{1}{T}\int_{-r_1}^{r_1}1_{[-h,h]}(v-u)K_h(v-u_a)g(v)dv}
\lesssim \frac{\rho}{Th}.
}
Consequently, 
\ba{
X_j(u)-\E[X_j(u)]\leq\wt X_j(u_a)-\E[\wt X_j(u_a)]+C_0\frac{\rho^{\tilde\alpha/2}}{Th},
}
where $C_0>0$ is a constant depending only on $r,\alpha,\alpha_0,\delta,b,(B_p)_{p\geq1},(B_{p,q})_{p,q\geq1},\eps$ and $\|K\|_\infty$.
Similarly, we also have
\ba{
X_j(u)-\E[X_j(u)]\geq\wt X_j(u_{a+1})-\E[\wt X_j(u_{a+1})]-C_0\frac{\rho^{\tilde\alpha/2}}{Th}.
}
Thus, we conclude
\[
\sup_{u\in[-r,r]}|\Delta_T(u)|
\leq \max_{0\leq a\leq N}|\wt\Delta_T(u_a)|+C_0\frac{\rho^{\tilde\alpha/2}}{h}.
\]
Therefore, \eqref{mono-arg} follows from the definition of $\rho$. 
\end{proof}

\begin{proof}[Proof of \cref{thm:cpcf}]
Define the function $f_h$ by \eqref{def:fh} and 
set $\bar\Delta_T:=\sup_{u\in[-r,r]}\abs{\tilde g_h(u)-f_h(u)}$. 
Since $\E[X_j(u)]=f_h(u)$ for all $j$, \cref{cpcf-edge} gives
\ben{\label{edge-applied}
\E\sbra{\bar\Delta_T}\lesssim\E\sbra{\sup_{u\in[-r,r]}|\Delta_T(u)|}+\frac{1}{Th}.
}

Now, consider the case $\alpha>1$. 
Then, by \cref{lemma:gap}, for some $\sigma\in\{-1,1\}$, there exist constants $A>1$ and $0<h_0<1$ depending only on $\alpha,b,\delta$ such that \eqref{type-I-gap} holds. 
Since $\hat\theta_h$ is a maximizer of $[-r,r]\ni u\mapsto\tilde g_{h}(u)\in[0,\infty)$, we have
\ba{
\pr\bra{|\hat\theta_h-\theta^*|>Ah}
&\leq \pr\bra{\tilde g_{h}(\theta^*+\sigma h)\leq\sup_{u\in[-r,r]:|u-\theta^*|>Ah}\tilde g_{h}(u)}\\
&\leq \pr\bra{f_{h}(\theta^*+\sigma h)\leq\sup_{u\in[-r,r]:|u-\theta^*|>Ah}f_{h}(u)+2\bar\Delta_T}.
}
Therefore, \eqref{type-I-gap} gives $\pr\bra{|\hat\theta_h-\theta^*|>Ah}
\leq\pr\bra{2\bar\Delta_T\geq h^{\alpha-1}}$. 
Hence, the desired result follows by Markov's inequality, \eqref{edge-applied} and \cref{lemma:ucp}. 

Next, consider the case $\alpha<1$. 
Then, by \cref{lemma:gap}, for some $\sigma\in\{-1,1\}$, there exist constants $A>1,c>0$ and $0<h_0<1$ depending only on $\alpha,\alpha_0,b,\delta,K$ such that \eqref{type-II-gap} holds. 
Since $\hat\theta_h$ is a maximizer of $[-r,r]\ni u\mapsto\tilde g_{h}(u)\in[0,\infty)$, 
\ba{
\pr\bra{|\hat\theta_h-\theta^*|>Ah}
&\leq \pr\bra{\tilde g_{h}(\theta^*)\leq\sup_{u\in[-r,r]:|u-\theta^*|>Ah}\tilde g_{h}(u)}\\
&\leq \pr\bra{f_{h}(\theta^*)\leq\sup_{u\in[-r,r]:|u-\theta^*|>Ah}f_{h}(u)+2\bar\Delta_T}\\
&\leq\pr\bra{2\bar\Delta_T\geq ch^{\alpha-1}},
}
where the last line follows from \eqref{type-II-gap}. 
Therefore, the desired result follows by Markov's inequality, \eqref{edge-applied} and \cref{lemma:ucp} again. 
\end{proof}

\subsection{Proof of Theorem \ref{thm:adaptive}}

Below we assume $T$ is sufficiently large $T$ such that $j_{\min}\leq\log_a(T^\gamma_{\max})$. 
For $0<\gamma\leq\gamma_{\max}$, we write $h^*(\gamma)$ for the largest element $h\in\mcl H_T$ such that $h\leq T^{-\gamma}$. 
Note that $h^*(\gamma)$ is well-defined because $\gamma\leq\gamma_{\max}$. 
Also, $ah^*(\gamma)>T^{-\gamma}$ for sufficiently large $T$ by construction, so $h^*(\gamma)\asymp T^{-\gamma}$ as $T\to\infty$. 
\begin{lemma}\label{lem:adaptive}
Under the assumptions of \cref{thm:adaptive}, for any $0<\gamma<1/\beta_\alpha$, $\pr(\hat h>T^{-\gamma})\to0$ as $T\to\infty$.
\end{lemma}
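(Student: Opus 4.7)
The plan is to exhibit a concrete bandwidth $\tilde h\in\mcl H_T$ with $\tilde h\leq T^{-\gamma}$ that satisfies, with probability tending to one, the defining condition of $\hat h$; this forces $\hat h\leq\tilde h\leq T^{-\gamma}$. The natural candidate is $\tilde h:=\max\{h\in\mcl H_T:h\leq T^{-\gamma}\}$, which exists for large $T$ because $\gamma<1/\beta_\alpha\leq\gamma_{\max}$ and satisfies $\tilde h\asymp T^{-\gamma}$ since $\mcl H_T$ is a geometric grid of ratio $a$. I fix $\eps>0$ with $\gamma(\beta_\alpha+\eps)<1$ and a constant $\eta\in(0,\gamma)$; $A$ and $h_0$ denote the constants from \cref{thm:cpcf}.

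First I would observe that the proof of \cref{thm:cpcf} actually bounds $\pr(\mcl M_h\not\subset[\theta^*-Ah,\theta^*+Ah])$ directly (the relevant event there involves $\tilde g_h$ itself and not a particular measurable selection), so $\pr(\mcl M_h\not\subset[\theta^*-Ah,\theta^*+Ah])\lesssim(Th^{\beta_\alpha+\eps})^{-1/2}$ for every $h\in\mcl H_T^\dagger:=\mcl H_T\cap[\tilde h,h_0\wedge T^{-\eta}]$. Since the summands are geometric in $1/h$ and dominated by the term at $h=\tilde h$, a union bound produces a good event $G$ with $\pr(G^c)\lesssim(T\tilde h^{\beta_\alpha+\eps})^{-1/2}=o(1)$, on which $\mcl M_h\subset[\theta^*-Ah,\theta^*+Ah]$ for every $h\in\mcl H_T^\dagger$. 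On $G$ and for $T$ large enough that $A_T\geq 2A$, I would verify the defining condition case by case: for $h'\in\mcl H_T^\dagger$ with $h'\geq\tilde h$, both $\mcl M_{\tilde h}$ and $\mcl M_{h'}$ lie in $[\theta^*-Ah',\theta^*+Ah']$ (using $\tilde h\leq h'$), hence $\bar d(\mcl M_{\tilde h},\mcl M_{h'})\leq 2Ah'\leq A_Th'$; for $h'\in\mcl H_T$ with $h'\geq h_0$, the crude inclusion $\mcl M_{h'}\subset[-r,r]$ combined with $A_Th'\geq A_Th_0\to\infty$ already yields $\bar d(\mcl M_{\tilde h},\mcl M_{h'})\leq A_Th'$.

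The hard part will be the intermediate range $h'\in\mcl H_T\cap(T^{-\eta},h_0)$, which is nonempty once $T^{-\eta}<h_0$: there, the trivial bound $\bar d\leq 2r$ needs $A_Th'\geq 2r$, yet $A_TT^{-\eta}$ may tend to zero under the standing hypothesis $A_T=o(T^c)$ for every $c>0$. I expect to close this gap by iterating the preceding union bound with a decreasing sequence of $\eta$'s, so that every element of the intermediate range is eventually absorbed into the region where \cref{thm:cpcf} applies; this is feasible because at most $O(\log T)$ elements of $\mcl H_T$ lie in that range and each has the form $T^{-\eta_j}$ for some $\eta_j>0$ at which \cref{thm:cpcf} is still informative. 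Once every $h'\geq\tilde h$ is handled, $\tilde h$ satisfies the defining condition with probability tending to one, so $\hat h\leq\tilde h\leq T^{-\gamma}$.
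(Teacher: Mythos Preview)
Your strategy—taking $\tilde h:=\max\{h\in\mcl H_T:h\leq T^{-\gamma}\}$ and showing that it satisfies the defining inequality of $\hat h$ with probability tending to one—is exactly the paper's approach. The paper compresses your case analysis by one line: it uses the triangle inequality
\[
\bar d(\mcl M_{h^*},\mcl M_{h'})\leq\bar d(\mcl M_{h^*},\{\theta^*\})+\bar d(\mcl M_{h'},\{\theta^*\})
\]
to reduce everything to bounding $\pr\bigl(\bar d(\mcl M_h,\{\theta^*\})>A_Th/2\bigr)$ for each $h\in\mcl H_T$ with $h\geq h^*$, then invokes \cref{thm:cpcf} (using $A_T/2>A$ eventually) together with a union bound over $|\mcl H_T|=O(\log T)$ bandwidths. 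Your remark that the proof of \cref{thm:cpcf} really controls the whole set $\mcl M_h$, not just a single selection, is correct and matches the paper's use of $\bar d(\mcl M_h,\{\theta^*\})$.

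You are more careful than the paper on one point: you notice that \cref{thm:cpcf} is stated for $h\leq h_0\wedge T^{-\eta}$, so bandwidths near the top of $\mcl H_T$ are not literally covered. The paper does not isolate this range; it applies \cref{thm:cpcf} across all $h\geq h^*$ without further comment. However, your proposed fix does not close the gap. Writing $h'=a^{-j}$ as $T^{-\eta_j}$ forces $\eta_j=j\log a/\log T$, which depends on $T$ and tends to $0$ for the largest bandwidths in $\mcl H_T$, while the constant $C$ in \cref{thm:cpcf} depends on $\eta$ with no stated control as $\eta\downarrow 0$; so the iterated union bound you sketch need not converge. If one insists on making this step rigorous, the right move is to go back inside the proof of \cref{thm:cpcf}: the hypothesis $h\leq T^{-\eta}$ enters only through \cref{lemma:ucp}, where it is used to rewrite powers of $T$ as powers of $h$ in the covering bound. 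Carrying the $T$-dependence directly instead yields, for all $h\in[h^*,h_0]$, a bound of the form $T^{\eps'}/\sqrt{Th^{\beta_\alpha}}$ for arbitrary $\eps'>0$, which is harmless after the $O(\log T)$-term union bound since $\gamma\beta_\alpha<1$.
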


\begin{proof}
Write $h^*=h^*(\gamma)$ for short. 
Observe that
\ba{
\pr(\hat h>T^{-\gamma})
&\leq \pr(\hat h>h^*)
\leq \pr(\bar d(\mcl M_{h^*},\mcl M_{h'})> A_Th'\text{ for some }h'\in\mcl H_T\text{ with }h'\geq h^*)\\
&\leq\sum_{h'\in\mcl H_T:h'\geq h^*}\pr(\bar d(\mcl M_{h^*},\mcl M_{h'})> A_Th').
}
Since $\bar d(\mcl M_{h^*},\mcl M_{h'})\leq\bar d(\mcl M_{h^*},\{\theta^*\})+\bar d(\mcl M_{h'},\{\theta^*\})$, we have
\ba{
\pr(\hat h>T^{-\gamma})
&\leq\sum_{h'\in\mcl H_T:h'\geq h^*}\cbra{\pr(\bar d(\mcl M_{h^*},\{\theta^*\})> A_Th'/2)+\pr(\bar d(\mcl M_{h'},\{\theta^*\})> A_Th'/2)}\\
&\leq2|\mcl H_T|\max_{h\in\mcl H_T:h\geq h^*}\pr(\bar d(\mcl M_{h},\{\theta^*\})> A_Th/2).
}
For every $h\in\mcl H_T$, we can find a random variable $\tilde\theta_{h}\in\mcl M_{h}$ such that $|\tilde\theta_{h}-\theta^*|> A_Th/2$ on the event $\bar d(\mcl M_{h},\{\theta^*\})> A_Th/2$. Hence,
\ba{
\pr(\hat h>T^{-\gamma})
&\leq2|\mcl H_T|\max_{h\in\mcl H_T:h\geq h^*}\pr(|\tilde\theta_{h}-\theta^*|> A_Th/2).
}
Thus, for any $\eps>0$, \cref{thm:cpcf} gives
\ba{
\pr(\hat h_T>T^{-\gamma})
=O\bra{|\mcl H_T|\max_{h\in\mcl H_T:h\geq h^*}\frac{1}{\sqrt{Th^{\beta_\alpha+\eps}}}}
=O\bra{\frac{|\mcl H_T|}{\sqrt{T(h^*)^{\beta_\alpha+\eps}}}}.
}
Since $\gamma<1/\beta_\alpha$ and $|\mcl H_T|=O(\log T)$, we obtain the desired result by taking $\eps$ so that $\gamma(\beta_\alpha+\eps)<1$. 
\end{proof}

\begin{proof}[Proof of \cref{thm:adaptive}]
Let $\gamma'$ be a constant such that $\gamma<\gamma'<1/\beta_\alpha$. Then, $A_TT^{-\gamma'}=o(T^{-\gamma})$ by assumption. 
Hence it is enough to prove $\pr(|\hat\theta_{\hat h}-\theta^*|>2A_TT^{-\gamma'})\to0$. 
Moreover, thanks to \cref{lem:adaptive}, it suffices to show
\[
\pr(|\hat\theta_{\hat h}-\theta^*|>2A_TT^{-\gamma'},\hat h\leq T^{-\gamma'})\to0.
\]
On the event $\hat h\leq T^{-\gamma'}$, we have $\hat h\leq h^*(\gamma')$, so 
\ba{
|\hat\theta_{\hat h}-\theta^*|\leq|\hat\theta_{\hat h}-\hat\theta_{h^*(\gamma')}|+|\hat\theta_{h^*(\gamma')}-\theta^*|
\leq A_Th^*(\gamma')+|\hat\theta_{h^*(\gamma')}-\theta^*|
\leq A_TT^{-\gamma'}+|\hat\theta_{h^*(\gamma')}-\theta^*|,
}
where the second inequality follows by the definition of $\hat h$. 
Therefore,
\[
\pr(|\hat\theta_{\hat h}-\theta^*|>2A_TT^{-\gamma'},\hat h\leq T^{-\gamma'})
\leq\pr(|\hat\theta_{h^*(\gamma')}-\theta^*|>A_TT^{-\gamma'}).
\]
Since $\pr(|\hat\theta_{h^*(\gamma')}-\theta^*|>A_TT^{-\gamma'})\to0$ by \cref{thm:cpcf}, we obtain the desired result.
\end{proof}

\subsection{Proof of Theorem \ref{thm:minimax}}

The proof of \cref{thm:minimax} relies on Theorem 2.2 in \cite{tsybakov2008nonparametric}, which requires the notion of the Hellinger distance. 
Recall that the Hellinger distance between two probability measures $P$ and $Q$ defined on a common measurable space $(\mcl X,\mcl A)$ is defined as
\[
H(P,Q):=\sqrt{\int_{\mcl X}\bra{\sqrt{\frac{dP}{d\nu}}-\sqrt{\frac{dQ}{d\nu}}}^2d\nu},
\]
where $\nu$ is any $\sigma$-finite measure on $(\mcl X,\mcl A)$ dominating both $P$ and $Q$. 
By Lemmas 2.9 and 2.10(1) in \cite{strasser1985mathematical}, 
\ben{\label{eq:affinity}
H^2(P,Q)=2\bra{1-\int_{\mcl X}\sqrt{\frac{dQ}{dP}}dP},
}
where $dQ/dP:=dQ^a/dP$ with $Q^a$ the absolutely continuous part of $Q$ with respect to $P$. 
Note that \citet{strasser1985mathematical} defines the Hellinger distance as $H(P,Q)/\sqrt 2$ in our notation. 

\begin{proof}[Proof of \cref{thm:minimax}]
Define a point process $\tilde N$ on $\mathbb R^2$ as $\tilde N:=\sum_{i=1}^\infty\delta_{(t_i,t_i+\gamma_i)}$. 
Since $N_1(\cdot)=\tilde N(\cdot\times\mathbb R)$ and $N_2(\cdot)=\tilde N(\mathbb R\times\cdot)$, we have $\sigma(N\cap[0,T])\subset\sigma(\tilde N\cap[0,T]^2)$. 
Also, with $D_T:=[0,T]\times[-1,T+1]$, we evidently have $\sigma(\tilde N\cap[0,T]^2)\subset\sigma(\tilde N\cap D_T)$. 
Therefore, it suffices to show that there exists a constant $b>0$ such that
\ben{\label{minimax-informative}
\liminf_{T\to\infty}\inf_{\tilde\theta_T}\sup_{|\theta|\leq 2\rho_T}\sup_{g\in\mcl G(\theta,\alpha,1/2,b)}\pr_g\bra{|\tilde\theta_T-\theta|\geq \rho_T}>0,
}
where the infimum is taken over all estimators based on $\tilde N\cap D_T$. 
For every probability density $g$ on $\mathbb R$, we denote by $P_{T,g}$ the law of $\tilde N\cap D_T$ induced on $(\mcl N^\#_{D_T},\mcl B(\mcl N^\#_{D_T}))$ under $\pr_g$, where $\mcl N^\#_{D_T}$ denotes the space of all counting measures on $D_T$ equipped with the $w^\#$-topology; see \cite[Appendix A2.6]{daley2006introduction} and \cite[Definition 9.1.II]{daley2007introduction} for details. 
According to Eq.(2.9) and Theorem 2.2 in \cite{tsybakov2008nonparametric}, we obtain \eqref{minimax-informative} once we find $g_T\in\mcl G(2\rho_T,\alpha,1/2,b)$ and $g_0\in\mcl G(0,\alpha,1/2,b)$ such that
\ben{\label{minimax-hellinger}
\limsup_{T\to\infty}H^2(P_{T,g_{T}},P_{T,g_0})<2.
}

Let us compute $H^2(P_{T,g_{T}},P_{T,g_0})$. 
Observe that $\tilde N$ can be viewed as a cluster process on $\mathbb R$ with centre process $N_1$ and component processes $\{\delta_{(t_i,t_i+\gamma_i)}:i\in\mathbb N\}$. 
Hence, by Proposition 6.3.III in \cite{daley2006introduction}, the probability generating functional (p.g.fl) of $\tilde N$ under $\pr_g$ for $g\in\{g_0,g_T\}$ is given by
\ba{
G_g(\varphi)&=\exp\bra{-\int_{\mathbb R}\bra{1-\int_{\mathbb R}\varphi(x,y)g(y-x)dy}dx}\\
&=\exp\bra{\int_{\mathbb R^2}\bra{\varphi(x,y)-1}g(y-x)dxdy}
}
for every measurable function $\varphi:\mathbb R^2\to(0,1]$ such that the support of $1-\varphi$ is bounded. 
Since $g$ is supported on $[-1,1]$, 
\ba{
G_{g}(1-1_{D_T}+\varphi1_{D_T})
&=e^{-T}\exp\bra{\int_{[0,T]\times\mathbb R}\varphi(x,y)g(y-x)dxdy}.
}
Therefore, in view of Eq.(5.5.14) in \cite{daley2006introduction}, the local Janossy densities of $\tilde N$ on $D_T$ under $\pr_g$ are given by
\[
j_{n,g}((x_1,y_1),\dots,(x_n,y_n)\mid D_T)=e^{-T}\prod_{i=1}^ng(y_i-x_i)1_{[0,T]}(x_i)\quad(n=1,2,\dots).
\]
This gives
\[
\frac{dP_{T,g_T}}{dP_{T,g_0}}(\tilde N)=\prod_{i:t_i\in[0,T]}\frac{g_T(\gamma_i)}{g_0(\gamma_i)}\quad\text{$\pr_{g_0}$-a.s.}
\]
Here, recall that $dP_{T,g_T}/dP_{T,g_0}:=dP_{T,g_T}^a/dP_{T,g_0}$ with $P_{T,g_T}^a$ the absolutely continuous part of $P_{T,g_T}$ with respect to $P_{T,g_0}$. 
Thus, by \eqref{eq:affinity},
\ba{
H^2(P_{T,g_T},P_{T,g_0})
=2\bra{1-\E_{g_0}\sbra{\prod_{i:t_i\in[0,T]}\sqrt{\frac{g_T(\gamma_i)}{g_0(\gamma_i)}}}}
=:2(1-a_T),
}
where $\E_{g_0}$ denotes expectation under $\pr_{g_0}$. 
Therefore, \eqref{minimax-hellinger} follows once we show
$
\liminf_{T\to\infty}a_T>0.
$
Recall that under $\pr_{g_0}$, $(\gamma_i)_{i=1}^\infty$ is i.i.d.~with common density $g_0$ and independent of $N_1$. 
Hence, 
\ba{
a_T&=\E_{g_0}\sbra{\bra{\int\sqrt{g_T(x)g_0(x)}dx}^{N_1([0,T])}}.
}
Since $N_1([0,T])$ follows the Poisson distribution with intensity $T$ under $\pr_{g_{0}}$,
\ba{
a_T&=\exp\bra{T\bra{\int\sqrt{g_T(x)g_0(x)}dx-1}}
=\exp\bra{-\frac{T}{2}\int\bra{\sqrt{g_T(x)}-\sqrt{g_0(x)}}^2dx}.
}
Therefore, we complete the proof once we show that there exists a constant $b>0$ such that
\ben{\label{minimax-finally}
\int\bra{\sqrt{g_T(x)}-\sqrt{g_0(x)}}^2dx=O(T^{-1})
}
for some $g_T\in\mcl G(2\rho_T,\alpha,1/2,b)$ and $g_0\in\mcl G(0,\alpha,1/2,b)$. 

\paragraph{Case 1: $0<\alpha<1$.} 
For every $\theta\in[0,1]$, define a function $f_\theta:\mathbb R\to[0,\infty)$ as 
\[
f_\theta(x)=\alpha|x-\theta|^{\alpha-1}1_{[-1,0)}(x-\theta)\quad(x\in\mathbb R).
\]
By construction, we evidently have $f_\theta\in\mcl G(\theta,\alpha,1/2,b)$ for some constant $b>0$ depending only on $\alpha$. 
Moreover, since $f_0$ satisfies Eq.(1.9) of \cite[Chapter VI]{ibragimov2013statistical} in a neighborhood of $z=0$ with $\alpha=\alpha-1$, $p\equiv1$ and $q\equiv0$ in their notation, $f_0$ has one singularity of order $\alpha-1$ located at 0 in the sense of Definition 1.1 of \cite[Chapter VI]{ibragimov2013statistical}. 
Therefore, Theorem 1.1 in \cite[Chapter VI]{ibragimov2013statistical} gives \eqref{minimax-finally} for $g_T=f_{2\rho_T}$ and $g_0=f_0$. 
\if0
When $1<\alpha<3/2$, then
\ba{
&\int\bra{\sqrt{f(x-\theta_T)}-\sqrt{f(x-\theta_0)}}^2dx
=\int\bra{\sqrt{f(x-2\rho_T)}-\sqrt{f(x)}}^2dx\\
&\leq2\int\bra{\sqrt{f_0(x-2\rho_T)}-\sqrt{f_0(x)}}^2dx
+2\int f_0(x)\bra{\exp\bra{-|x-2\rho_T|^{\alpha-1}/2}-\exp\bra{-|x|^{\alpha-1}/2}}^2dx\\
&\leq2\int\bra{\sqrt{f_0(x-2\rho_T)}-\sqrt{f_0(x)}}^2dx
+\frac{1}{2}\sup_{x\in[-L,L]}f_0(x)\int_{-\infty}^\infty \bra{|x|^{\alpha-1}-|x-2\rho_T|^{\alpha-1}}^2dx.
}
Observe that
\ba{
\int_{|x|>4\rho_T} \bra{|x|^{\alpha-1}-|x-2\rho_T|^{\alpha-1}}^2dx
&=\int_{|x|>4\rho_T} \bra{1-\abs{1-\frac{2\rho_T}{x}}^{\alpha-1}}^2|x|^{2\alpha-2}dx\\
&\leq(\alpha-1)2^{2-\alpha}\int_{|x|>4\rho_T} \bra{\frac{2\rho_T}{x}}^2|x|^{2\alpha-2}dx\\
&=(\alpha-1)2^{4-\alpha}\rho_T^2\int_{|x|>4\rho_T}|x|^{2\alpha-4}dx
=O(\rho_T^{2\alpha-1}),
}
where the last equality follows from the assumption $3\alpha-2<0$. 
Hence
\ba{
\int_{-\infty}^\infty \bra{|x|^{\alpha-1}-|x-2\rho_T|^{\alpha-1}}^2dx=O(\rho_T^{2\alpha-1})=O(T^{-1}).
}
Combining this with the assumptions on $f_0$ gives \eqref{minimax-finally}.
\fi

\paragraph{Case 2: $\alpha>1$.} 
We employ a minor variant of the construction used in the proof of \cite[Theorem 2]{arias2022estimation}. 
For every $\theta\in[0,1/2)$, define functions $\psi_\theta:\mathbb R\to\mathbb R$ and $f_\theta:\mathbb R\to\mathbb R$ as
\ba{
\psi_{\theta}(x)=(|x|^{\alpha-1}-(2\theta)^{\alpha-1})1_{(-2\theta,0]}(x)
+\bra{|x|^{\alpha-1}+(2\theta)^{\alpha-1}-2^\alpha|x-\theta|^{\alpha-1}}1_{(0,2\theta)}(x),\quad x\in\mathbb R
}
and
\ba{
f_\theta(x)=\frac{\alpha}{2(\alpha-1)}\bra{1-|x|^{\alpha-1}+\psi_\theta(x)}1_{[-1,1]}(x),\quad x\in\mathbb R.
}
Observe that $f_\theta\geq0$ and
\ba{
\int_{-\infty}^\infty f_{\theta}(x)dx
=\frac{\alpha}{2\alpha-1}\bra{\int_{-1}^1(1-|x|^{\alpha-1})dx+\int_{-2\theta}^{2\theta}|x|^{\alpha-1}dx-2^\alpha\int_0^{2\theta}|x-\theta|^{\alpha-1}dx}
=1.
}
Hence $f_\theta$ is a probability density function on $\mathbb R$. 
Moreover, we have for all $x\in[-1,1]$
\ben{\label{typeI-minimax-key}
2^{2-\alpha}|x-\theta|^{\alpha-1}
\leq\frac{2(\alpha-1)}{\alpha}\bra{f_\theta(\theta)-f_\theta(x)}
\leq2^\alpha|x-\theta|^{\alpha-1}.
}
In fact, a straightforward computation shows
\ba{
\frac{2(\alpha-1)}{\alpha}\bra{f_\theta(\theta)-f_\theta(x)}
&=\begin{cases}
    2(2\theta)^{\alpha-1} & \text{if }-2\theta<x\leq0,\\
    2^\alpha|x-\theta|^{\alpha-1} & \text{if }0<x<2\theta,\\
    |x|^{\alpha-1}+(2\theta)^{\alpha-1} & \text{otherwise}.
\end{cases}
}
Hence \eqref{typeI-minimax-key} is evident if $0<x<2\theta$. 
If $-2\theta<x\leq0$, then $\theta\leq|x-\theta|\leq3\theta$, so 
$
2(2\theta)^{\alpha-1}\leq2^\alpha|x-\theta|^{\alpha-1}
$
and
$
2(2\theta)^{\alpha-1}\geq2(2|x-\theta|/3)^{\alpha-1}\geq2^{2-\alpha}|x-\theta|^{\alpha-1}
$. Hence \eqref{typeI-minimax-key} holds. 
Also, Jensen's inequality gives
$
|x-\theta|^{\alpha-1}\leq2^{\alpha-2}(|x|^{\alpha-1}+\theta^{\alpha-1})\leq2^{\alpha-2}(|x|^{\alpha-1}+(2\theta)^{\alpha-1}).
$ 
Hence the lower bound of \eqref{typeI-minimax-key} holds if $|x|\geq2\theta$. 
Moreover, if $x\leq-2\theta$, then $|x|\geq2\theta$ and $|x-\theta|=\theta-x\geq-x=|x|$, so
\ba{
|x|^{\alpha-1}+(2\theta)^{\alpha-1}
\leq2|x|^{\alpha-1}\leq2|x-\theta|^{\alpha-1}.
}
If $x\geq2\theta$, then $x-\theta\geq\theta$ and thus 
\ba{
|x|^{\alpha-1}+(2\theta)^{\alpha-1}
\leq2^{\alpha-2}|x-\theta|^{\alpha-1}+3\cdot2^{\alpha-2}\theta^{\alpha-1}
\leq2^\alpha|x-\theta|^{\alpha-1},
}
where the first inequality is by Jensen's inequality. 
Therefore, the upper bound of \eqref{typeI-minimax-key} also holds if $|x|\geq2\theta$. 
Consequently, $f_\theta\in\mcl G(\theta,\alpha,1/2,b)$ for some constant $b>0$ depending only on $\alpha$. 

Now, Eq.(2.27) of \cite{tsybakov2008nonparametric} gives
\ba{
\int\bra{\sqrt{f_\theta(x)}-\sqrt{f_0(x)}}^2dx
&\leq\int_{-1}^1\bra{\frac{f_\theta(x)}{f_0(x)}-1}^2f_0(x)dx\\
&=\frac{\alpha}{2(\alpha-1)}\int_{-2\theta}^{2\theta}\frac{\psi_\theta(x)^2}{1-|x|^{\alpha-1}}dx
\leq\frac{\alpha}{2(\alpha-1)}\frac{3(2\theta)^{2\alpha-1}}{1-(2\theta)^{\alpha-1}}.
}
Hence \eqref{minimax-finally} holds for $g_T=f_{2\rho_T}$ and $g_0=f_0$. 
\end{proof}

\if0
\subsection{Proof for type III}

\noindent(ii) Similarly to the above, we deduce
\ba{
\pr\bra{|\hat\theta_h-\theta^*|>h}
&\leq \pr\bra{\E[\tilde g_{h}(\theta^*)]\leq \sup_{u\in[-r,r]:|u-\theta^*|>h}\E[\tilde g_{h}(u)]+2\bar\Delta_T}.
}
By the definition of $G$,
\ben{\label{campbell-general}
\E[\tilde g_{h}(u)]=\int_{\mathbb R} K_h(v-u)G(dv)\quad\text{for any } u\in\mathbb R.
}
This particularly yields 
$
h\E[\tilde g_{h}(\theta^*)]\geq K(0)G(\{\theta^*\}).
$ 
Therefore,
\ba{
\pr\bra{|\hat\theta_h-\theta^*|>h}
&\leq \pr\bra{K(0)G(\{\theta^*\})\leq h\sup_{u\in[-r,r]:|u-\theta^*|>h}\E[\tilde g_{h}(u)]+2h\bar\Delta_T}.
}
Since $h\bar\Delta_T\to^p0$ by \cref{lemma:ucp}, we complete the proof once we show
\ben{\label{type-III-key}
\limsup_{T\to\infty}\sup_{u\in[-r,r]:|u-\theta^*|>h}h\E[\tilde g_{h}(u)]
\leq K(0)\sup_{u\in\mf A\setminus\{\theta^*\}}G(\{u\})
}
thanks to \ref{type-III}.
To prove \eqref{type-III-key}, observe that \eqref{campbell-general} and \ref{kernel} give
\ba{
h\E[\tilde g_{h}(u)]\leq K(0)G\bra{(u-h,u+h]}
=K(0)\{F(u+h)-F(u-h)\},
}
where $F(v)=G\bra{(-r_1,v]}$ for every $v\in\mathbb R$. 
Also, since $\mf A$ contains only isolated points, $\mf A\cap D$ is a finite set for any compact set $D\subset\mathbb R$. 
Therefore, we can define a function $F_d:\mathbb R\to[0,\infty)$ as $F_d(v)=\sum_{w\in(-r_1,v]}G(\{w\})$ for $v\in\mathbb R$, and $F_d(u+h)-F_d(u-h)=\max_{v\in(u-h,u+h]}G(\{v\})$ for every $u\in[-r,r]$ as long as $T$ is sufficiently large. 
Thus, with $F_c:=F-F_d$,
\ba{
&h\sup_{u\in[-r,r]:|u-\theta^*|>h}\E[\tilde g_{h}(u)]\\
&\leq K(0)\sup_{u\in[-r,r]:|u-\theta^*|>h}\cbra{F_c(u+h)-F_c(u-h)}
+K(0)\sup_{u\in[-r,r]:|u-\theta^*|>h}\cbra{F_d(u+h)-F_d(u-h)}\\
&\leq K(0)\sup_{u\in[-r,r]}\cbra{F_c(u+h)-F_c(u-h)}
+K(0)\sup_{u\in\mf A\setminus\{\theta^*\}}G(\{u\}).
}
Since $F_c$ is continuous by construction, we conclude \eqref{type-III-key}.
\fi

\subsection{Proof of Assumption \ref{ass:cpcf}(ii) for LBHPG with gamma kernels}
\label{app:lb_hawkes_gamma_a2}
In this section, we show that $\mathrm{LBHPG}$ with common rate parameters $\beta_{ij}\equiv\beta>0, i, j=1, 2$ satisfies  \ref{ass:cpcf}(ii) with $\alpha=\min\{D_{12}, D_{21}\}$ when $\min\{D_{12}, D_{21}\} < 1$ and the stationary assumption (spectral radius $\rho(\boldsymbol{\alpha})$ of the matrix $\boldsymbol{\alpha}$ is smaller than $1$) holds.
Let
\[
h_a(t):=\frac{\beta^{a}}{\Gamma(a)}t^{a-1}e^{-\beta t}\mathbf 1_{(0,\infty)}(t),\qquad t\in\mathbb{R}, a>0,
\] $\boldsymbol{H} = (h_{D_{ij}})_{1\leq i, j\leq 2}$ , $\alpha_*=\min\{D_{12}, D_{21}\}(<1)$, and $D_* = \min\{D_{11}, D_{12}, D_{21}, D_{22}\}$.
Then, we have $\Phi = \boldsymbol{\alpha} \odot \boldsymbol{H}$ by definition, where $\odot$ is the Hadamard product. 
Recall $\Psi = \sum_{m\geq 1} \Phi^{(*m)}$ converges in $L^1(\mathbb{R})$ componentwise.

By \eqref{eq:hawkes-cov-density} and  \eqref{eq:cpcf_LBHPG} , it is sufficient to show the following proposition: 
\begin{proposition}\label{prop:shape_hawkes_cov_density}
Under the assumptions above, there exist $ C>0$ and $\delta>0$ such that 
\begin{align}
\alpha_{ij} h_{D_{ij}}(u)
  &\le \Psi_{ij}(u),
  && u>0,\ (i,j)\in\{(1,2),(2,1)\},
  \label{eq:Psi_lower_bound}\\
  \max\{\Psi_{12}(u),\Psi_{21}(u)\}
  &\le Cu^{\alpha_*-1},
  && 0<u<\delta,
  \label{eq:Psi_max_bound}\\
0 \le \int_{\mathbb R}\Psi_{i1}(s)\Psi_{i2}(s+u)\,ds
  &\le C|u|^{\alpha_*-1},
  && 0<|u|<\delta, i\in\{1, 2\}.
  \label{eq:Psi_convolution_bound}
\end{align}
\end{proposition}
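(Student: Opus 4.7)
My plan exploits the semigroup property of gamma kernels with common rate, namely $h_a * h_b = h_{a+b}$, which lets us write each entry of $\Phi^{(*m)}$ as an explicit finite sum of gamma densities indexed by walks on $\{1,2\}$:
\[
(\Phi^{(*m)})_{ij}(u) = \sum_{\mathbf{j}:\, j_0=i,\, j_m=j} c(\mathbf{j})\, h_{a(\mathbf{j})}(u),\qquad c(\mathbf{j}) := \prod_{k=1}^m \alpha_{j_{k-1} j_k},\quad a(\mathbf{j}) := \sum_{k=1}^m D_{j_{k-1} j_k}.
\]
With this decomposition in hand, the lower bound \eqref{eq:Psi_lower_bound} is immediate from componentwise non-negativity and isolation of the $m=1$ term.

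For \eqref{eq:Psi_max_bound}, the key combinatorial observation is that every walk from $1$ to $2$ (or from $2$ to $1$) must contain at least one ``crossing'' edge $j_{k-1}\neq j_k$, so $a(\mathbf{j}) \geq \min\{D_{12}, D_{21}\} = \alpha_*$. The pointwise bound $h_a(u) \leq \beta^a u^{a-1}/\Gamma(a)$, combined with the uniform finiteness of $\sup_{a \geq \alpha_*}\beta^a/\Gamma(a)$, gives $h_{a(\mathbf{j})}(u) \leq M u^{\alpha_* - 1}$ on $(0,1)$ uniformly over walks. Summing the path weights via $\sum_{m \geq 1}(\boldsymbol{\alpha}^m)_{ij} = ((I-\boldsymbol{\alpha})^{-1}\boldsymbol{\alpha})_{ij} < \infty$, which is valid because $\rho(\boldsymbol{\alpha}) < 1$, yields \eqref{eq:Psi_max_bound} with $\delta = 1$.

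For the cross-correlation bound \eqref{eq:Psi_convolution_bound}, I would apply the same expansion to both $\Psi_{i1}$ and $\Psi_{i2}$, reducing the problem to controlling $I(a,b,u) := \int_0^\infty h_a(s) h_b(s+u) ds$ summed against the product path weights $c(\mathbf{j}_1)c(\mathbf{j}_2)$. The relevant shape inequality becomes $a + b \geq D_{i1} + D_{i2} \geq \alpha_*$, since $\alpha_*$ is one of $D_{i1}, D_{i2}$ for each $i\in\{1,2\}$. I would split the double sum by whether $a + b \geq 1$ (the ``large regime'') or $a + b < 1$ (the ``small regime''). In the large regime, $I(a,b,u)$ admits a uniform bound (via $\min(\|h_a\|_\infty, \|h_b\|_\infty)$ when one of $a, b$ exceeds $1$, and via the explicit formula $\beta\Gamma(a+b-1)/[2^{a+b-1}\Gamma(a)\Gamma(b)]$ otherwise, which Stirling shows is $\lesssim 1/\sqrt{a+b}$), while the path weights sum geometrically to a finite constant. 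In the small regime, the inequality $a + b \geq (m_1+m_2)D_*$ forces only finitely many length pairs $(m_1, m_2)$ to appear; for each, the substitution $s = ut$ delivers $I(a,b,u) \leq C(a,b) u^{a+b-1} \leq C(a,b) u^{\alpha_* - 1}$ on $(0,1)$. The case $u < 0$ follows from the change of variable $t = s + u$, which swaps the roles of the two factors.

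The main obstacle I anticipate lies at the borderline $a + b = 1$, where both the substitution bound $C(a,b) = \beta^{a+b}\Gamma(1-a-b)/[\Gamma(b)\Gamma(1-b)]$ and the direct bound $\beta\Gamma(a+b-1)/[2^{a+b-1}\Gamma(a)\Gamma(b)]$ degenerate. This is ultimately benign: $D_* > 0$ caps the number of length pairs with $a + b < 1$, so only finitely many shape pairs arise in the small regime; each gives a finite constant, and the borderline case $a + b = 1$ (if it occurs for special $D$-values) is absorbed via the logarithmic refinement $I(a,b,u) \lesssim \log(1/u) \lesssim u^{\alpha_* - 1}$ valid for $u \in (0, 1)$.
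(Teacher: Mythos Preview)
Your plan matches the paper's proof closely: the same path expansion of $\Phi^{(*m)}$ into gamma densities, the same observation that off-diagonal walks force a crossing edge and hence a shape sum $\ge\alpha_*$, and the same reduction of the convolution integral to bilateral gamma densities $f^{\mathrm{BG}}_{a,b}(u)=\int h_a(s)h_b(s+u)\,ds$. Your argument for \eqref{eq:Psi_max_bound} is in fact slightly more direct than the paper's: instead of splitting $\Psi=\Psi^s+\Psi^b$ and separately bounding the tail, you use the uniform bound $h_a(u)\le\bigl(\sup_{a\ge\alpha_*}\beta^a/\Gamma(a)\bigr)u^{\alpha_*-1}$ on $(0,1)$ and sum the path weights via $(I-\boldsymbol{\alpha})^{-1}\boldsymbol{\alpha}$ in one shot. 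The paper instead cites K\"uchler--Tappe for the behaviour of $f^{\mathrm{BG}}_{a,b}$ near $0$, whereas you derive these estimates by hand; both routes are fine.

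There is, however, one genuine slip in your treatment of \eqref{eq:Psi_convolution_bound}. In the ``large regime'' with both $a,b<1$ and $a+b\ge1$, you claim that the explicit value $I(a,b,0)=\beta\Gamma(a+b-1)/[2^{a+b-1}\Gamma(a)\Gamma(b)]$ is controlled by Stirling as $\lesssim(a+b)^{-1/2}$. But in this sub-case $a+b\in[1,2)$, so Stirling is irrelevant, and in fact $\Gamma(a+b-1)\to\infty$ as $a+b\downarrow1$: the bound is not uniform. The fix is already implicit in your own finiteness argument for the small regime: if both $a<1$ and $b<1$, then $m_1D_*\le a<1$ and $m_2D_*\le b<1$, so $m_1,m_2<1/D_*$ and only finitely many walk pairs contribute. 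For each such pair with $a+b>1$ the density $f^{\mathrm{BG}}_{a,b}$ is bounded (and when $a,b<1$ it is maximised at $u=0$, so $I(a,b,u)\le I(a,b,0)<\infty$), while the isolated borderline $a+b=1$, if it occurs, is handled by your logarithmic estimate. The paper avoids this issue by doing the finite/tail split $\Psi=\Psi^s+\Psi^b$ up front and bounding all cross terms involving $\Psi^b$ by $\|\Psi^b\|_\infty\|\Psi\|_1$, leaving only a \emph{finite} bilinear combination of $f^{\mathrm{BG}}_{a,b}$ to analyse. Either organisation works once you replace the Stirling step by the finiteness observation.
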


In the following, we first provide several lemmas and then use them to establish
Proposition \ref{prop:shape_hawkes_cov_density}.
Before proceeding, we decompose $\Psi$ as
\begin{equation}\label{eq:psi_decompose_gamma}
\Psi=\Psi^s+\Psi^b,
\end{equation}
where
\[
\Psi^s := \sum_{m=1}^{M-1}\Phi^{(*m)},\qquad
\Psi^b := \sum_{m=M}^{\infty}\Phi^{(*m)},\qquad
M=\lceil 1/D_*\rceil+1.
\]

\begin{lemma}\label{lem:phi_m_convolution_path_expansion}
For $m\ge1$, $i,j\in\{1,2\}$ and $t>0$,
\[
(\Phi^{(*m)})_{ij}(t)
=\sum_{
    \substack{(i_0,\dots,i_m)\in\{1,2\}^{m+1},\\ i_0=j,\; i_m=i}
}
\Bigl(\prod_{\ell=1}^m \alpha_{i_\ell i_{\ell-1}}\Bigr)\,
h_{\sum_{\ell=1}^m D_{i_\ell i_{\ell-1}}}(t).
\]
\end{lemma}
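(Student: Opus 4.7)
The plan is to prove the formula by induction on $m$, exploiting the reproductive property of gamma densities with common rate $\beta$, namely $h_a \ast h_b = h_{a+b}$ for all $a,b>0$, which follows from the standard convolution identity $\int_0^t s^{a-1}(t-s)^{b-1}ds = B(a,b)\,t^{a+b-1}$.

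For the base case $m=1$, the sum on the right-hand side contains only the single path $(i_0,i_1)=(j,i)$, giving $\alpha_{ij}\,h_{D_{ij}}(t)=\Phi_{ij}(t)$, which matches $\Phi^{(\ast 1)}_{ij}(t)$. For the inductive step, I would write
\[
\Phi^{(\ast(m+1))}_{ij}(t)=(\Phi\ast\Phi^{(\ast m)})_{ij}(t)=\sum_{k\in\{1,2\}}\int_0^t \Phi_{ik}(s)\,(\Phi^{(\ast m)})_{kj}(t-s)\,ds,
\]
substitute $\Phi_{ik}(s)=\alpha_{ik}h_{D_{ik}}(s)$ together with the inductive hypothesis for $(\Phi^{(\ast m)})_{kj}(t-s)$, and swap the sum and integral. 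Each term of the resulting sum becomes
\[
\alpha_{ik}\Bigl(\prod_{\ell=1}^m\alpha_{i_\ell i_{\ell-1}}\Bigr)\int_0^t h_{D_{ik}}(s)\,h_{\sum_{\ell=1}^m D_{i_\ell i_{\ell-1}}}(t-s)\,ds,
\]
with $i_0=j$ and $i_m=k$; the gamma reproductive property then collapses the integral into $h_{D_{ik}+\sum_{\ell=1}^m D_{i_\ell i_{\ell-1}}}(t)$.

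The final step is a relabeling: extend each path $(i_0,\dots,i_m)$ with $i_0=j,\,i_m=k$ by appending $i_{m+1}=i$, so that summing over $k\in\{1,2\}$ amounts to summing over all admissible values of $i_m$ in paths of length $m+2$ with endpoints $i_0=j$ and $i_{m+1}=i$. The extra factor $\alpha_{ik}=\alpha_{i_{m+1}i_m}$ and the extra shape $D_{ik}=D_{i_{m+1}i_m}$ fit exactly into the product and sum running up to $\ell=m+1$, giving the claimed expression for $(\Phi^{(\ast(m+1))})_{ij}(t)$.

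I do not anticipate a genuine obstacle here; the statement is essentially a path-sum expansion of an iterated matrix convolution, and once the gamma reproductive property is in hand, the proof is pure bookkeeping. The only mildly delicate point is to keep the orientation of the index pairs consistent: matrix convolution sums over the adjacent (inner) index, so the product $\alpha_{i_\ell i_{\ell-1}}$ must be written so that $i_{\ell-1}$ is ``earlier in time'' than $i_\ell$, matching the convention used in the statement. Making this explicit when performing the inductive substitution is the key thing to get right.
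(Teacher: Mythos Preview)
Your proof is correct and follows exactly the approach the paper has in mind: the paper's own proof is the one-line remark ``The result follows from the gamma distribution's reproducibility,'' and your induction on $m$ together with $h_a\ast h_b=h_{a+b}$ is precisely the detailed expansion of that sentence.
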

\begin{proof}
The result follows from the gamma distribution's reproducibility.
\end{proof}

\begin{lemma}\label{lem:gamma_sup_a}
For every $a\ge1$, $\|h_a\|_\infty \le \beta$.
\end{lemma}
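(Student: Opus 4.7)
The plan is to identify the maximizer of $h_a$ by elementary calculus and reduce the lemma to a classical gamma-function inequality.

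I would first dispose of the case $a=1$: the density $h_1(t)=\beta e^{-\beta t}\mathbf 1_{(0,\infty)}(t)$ is strictly decreasing on $(0,\infty)$ with supremum $\beta$, so $\|h_1\|_\infty=\beta$. For $a>1$, differentiating yields
\[
h_a'(t)=\frac{\beta^a}{\Gamma(a)}\,t^{a-2}e^{-\beta t}\bigl((a-1)-\beta t\bigr),
\]
so $h_a$ attains its unique maximum on $(0,\infty)$ at $t^\ast=(a-1)/\beta$, with value
\[
\|h_a\|_\infty
=h_a(t^\ast)
=\frac{\beta\,(a-1)^{a-1}e^{-(a-1)}}{\Gamma(a)}.
\]
The desired bound $\|h_a\|_\infty\le\beta$ is therefore equivalent to $(a-1)^{a-1}e^{-(a-1)}\le\Gamma(a)$. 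Setting $b=a-1\ge 0$ (with $0^0:=1$), this is the classical inequality $b^b\le e^b\,\Gamma(b+1)$.

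I would then establish the reduced inequality by invoking the Stirling lower bound $\Gamma(b+1)\ge\sqrt{2\pi b}\,(b/e)^b$, valid for every $b>0$. This handles the regime $b\ge 1/(2\pi)$ immediately. For the narrow remaining interval $b\in[0,1/(2\pi)]$, I would verify the inequality by hand: both sides equal $1$ at $b=0$, and the difference $\Gamma(b+1)-b^b e^{-b}$ is non-decreasing on a right-neighbourhood of $b=0$ because its derivative contains the term $-b^b e^{-b}\log b\to+\infty$ as $b\downarrow 0$, which dominates the bounded contribution from $\psi(b+1)\Gamma(b+1)$. A routine monotonicity check extends this through the short interval.

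The only non-trivial step is the reduced gamma inequality, but since it is a well-known consequence of Stirling's formula, no serious obstacle arises; the rest of the argument is essentially one derivative of $h_a$.
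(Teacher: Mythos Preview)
Your argument is correct, but it takes a different route from the paper. The paper does not compute the maximizer of $h_a$ at all: it observes that $h_a$ is log-concave on $[0,\infty)$ for $a\ge 1$ and invokes the general bound $\|f\|_\infty\le 1/\sigma_f$ for log-concave densities (Saumard--Wellner), where $\sigma_f$ is the standard deviation. Since the gamma law has $\sigma_a=\sqrt a/\beta$, this yields the sharper conclusion $\|h_a\|_\infty\le\beta/\sqrt a\le\beta$ in two lines. Your approach trades that structural fact for explicit calculus plus the Stirling lower bound $\Gamma(b+1)\ge\sqrt{2\pi b}\,(b/e)^b$, which is arguably an equally ``external'' ingredient. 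Your handling of the residual range $b\in(0,1/(2\pi)]$ is slightly informal as written but is easy to make precise: on that interval $\Gamma(b+1)\le 1$ and $|\psi(b+1)|\le|\psi(1)|<0.58$, whereas $b^b e^{-b}|\log b|$ is decreasing in $b$ and exceeds $1.1$ at $b=1/(2\pi)$, so the derivative of $\Gamma(b+1)-b^b e^{-b}$ stays positive throughout and the inequality follows from its value $0$ at $b=0$. Both proofs are short; the paper's buys a tighter constant with less computation, while yours avoids appealing to the log-concave density literature.
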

\begin{proof}
Fix $a\ge1$. 
Since the gamma density $h_a$ is log-concave on $[0, \infty)$, we have
\[
\|h_a\|_\infty \le \frac{1}{\sigma_a}
\]
by \cite[Eq.~(5.8)]{saumard2014log}, where $\sigma_a$ is the standard deviation of $h_a$.
Since the standard deviation of $h_a$ is $\sigma_a=\sqrt{a}/\beta$, we conclude that
\[
\|h_a\|_\infty \le \frac{\beta}{\sqrt{a}} \leq \beta.
\]
\end{proof}

\begin{lemma}\label{lem:psi_b_boundedness}
Each component of $\Psi^b$ is bounded on $\mathbb{R}$.
\end{lemma}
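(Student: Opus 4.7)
The plan is to expand $\Psi^b$ via the path formula in Lemma \ref{lem:phi_m_convolution_path_expansion}, apply the uniform bound from Lemma \ref{lem:gamma_sup_a} termwise, and then close the estimate using the matrix geometric series based on the spectral radius assumption $\rho(\boldsymbol{\alpha})<1$.

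More concretely, fix $i,j\in\{1,2\}$ and $m\ge M$. By Lemma \ref{lem:phi_m_convolution_path_expansion},
\[
(\Phi^{(*m)})_{ij}(t)=\sum_{\substack{(i_0,\dots,i_m)\in\{1,2\}^{m+1}\\ i_0=j,\,i_m=i}}\Bigl(\prod_{\ell=1}^m\alpha_{i_\ell i_{\ell-1}}\Bigr)h_{S_m(i_\cdot)}(t),\qquad S_m(i_\cdot):=\sum_{\ell=1}^m D_{i_\ell i_{\ell-1}}.
\]
The key observation is that $S_m(i_\cdot)\ge mD_*\ge MD_*>1$ for every such path, because by definition of $M$ we have $MD_*\ge(\lceil 1/D_*\rceil+1)D_*\ge 1+D_*$. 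In particular each shape parameter appearing above is at least $1$, so Lemma \ref{lem:gamma_sup_a} applies and gives $\|h_{S_m(i_\cdot)}\|_\infty\le\beta$. Since each $\Phi^{(*m)}_{ij}$ is nonnegative and supported on $[0,\infty)$, we can bound it uniformly on $\mathbb{R}$ by
\[
\|(\Phi^{(*m)})_{ij}\|_\infty\le \beta\sum_{\substack{(i_0,\dots,i_m)\\ i_0=j,\,i_m=i}}\prod_{\ell=1}^m\alpha_{i_\ell i_{\ell-1}}=\beta\,(\boldsymbol{\alpha}^m)_{ij},
\]
where the last equality is the standard combinatorial identity for matrix powers.

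Summing over $m\ge M$ yields
\[
\|\Psi^b_{ij}\|_\infty\le\beta\sum_{m=M}^\infty(\boldsymbol{\alpha}^m)_{ij}=\beta\Bigl(\sum_{m=M}^\infty\boldsymbol{\alpha}^m\Bigr)_{ij},
\]
and the matrix series on the right converges (entrywise) because $\rho(\boldsymbol{\alpha})<1$ by the stationarity assumption, which makes $(I_2-\boldsymbol{\alpha})^{-1}$ well defined and $\sum_{m\ge 0}\boldsymbol{\alpha}^m$ absolutely convergent. Hence every component of $\Psi^b$ is bounded.

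There is no real obstacle here: the only point requiring care is checking that $MD_*>1$ so that the shape parameter $S_m(i_\cdot)$ of every gamma density appearing in $\Phi^{(*m)}$ (for $m\ge M$) is at least $1$, which is exactly what justifies invoking Lemma \ref{lem:gamma_sup_a}. The decomposition $\Psi=\Psi^s+\Psi^b$ in \eqref{eq:psi_decompose_gamma} was arranged precisely to isolate the potentially singular short-path part $\Psi^s$ (with shape parameters possibly below $1$) from the regular tail $\Psi^b$ that this lemma controls.
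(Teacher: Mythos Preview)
Your proof is correct and follows essentially the same approach as the paper: expand $(\Phi^{(*m)})_{ij}$ via the path formula, bound each gamma density by $\beta$ using Lemma~\ref{lem:gamma_sup_a} (since the shape parameters are at least $mD_*\ge MD_*>1$), recognize the resulting path sum as $(\boldsymbol{\alpha}^m)_{ij}$, and sum using $\rho(\boldsymbol{\alpha})<1$. Your write-up is slightly more explicit in verifying $MD_*>1$ and in explaining the role of the decomposition, but the argument is identical.
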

\begin{proof}
By Lemma~\ref{lem:phi_m_convolution_path_expansion}, for $m\ge M$ every component of $\Phi^{(*m)}$ is a gamma density
with shape parameter at least $mD_* > 1$. Hence Lemma~\ref{lem:gamma_sup_a} yields
\[
\|(\Phi^{(*m)})_{ij}\|_\infty
\le \beta\sum_{
    \substack{(i_0,\dots,i_m)\in\{1,2\}^{m+1},\\ i_0=j,\; i_m=i}
}\prod_{\ell=1}^m \alpha_{i_\ell i_{\ell-1}}
=\beta(\boldsymbol{\alpha}^m)_{ij}.
\]
Since $\rho(\boldsymbol{\alpha})<1$, the series $\sum_{m\ge1}\boldsymbol{\alpha}^m$ converges entrywise, hence
$\sum_{m\ge M}(\boldsymbol{\alpha}^m)_{ij}<\infty$ and therefore
\[
\|\Psi^b_{ij}\|_\infty
\le \sum_{m=M}^\infty \|(\Phi^{(*m)})_{ij}\|_\infty
\le \beta\sum_{m\ge M}(\boldsymbol{\alpha}^m)_{ij}<\infty.
\]
\end{proof}

\begin{proof}[Proof of Proposition \ref{prop:shape_hawkes_cov_density}]
Throughout the proof, $C>0$ and $\delta>0$ denote generic constants.

\medskip
\noindent\textbf{Proof of \eqref{eq:Psi_lower_bound}.}
Since $\Psi=\sum_{m\ge1}\Phi^{(*m)}$ and each term is nonnegative, we have $\Psi_{ij}\ge \Phi_{ij}=\alpha_{ij}h_{D_{ij}}$
on $(0,\infty)$, which yields \eqref{eq:Psi_lower_bound}.

\medskip
\noindent\textbf{Proof of \eqref{eq:Psi_max_bound}.}
Fix $(i,j)\in\{(1,2),(2,1)\}$. By \eqref{eq:psi_decompose_gamma},
\[
\Psi_{ij}(u)=\Psi^s_{ij}(u)+\Psi^b_{ij}(u),\qquad u>0.
\]
By Lemma~\ref{lem:phi_m_convolution_path_expansion} and the definition of $\Psi^s$ as a finite sum,
$\Psi^s_{ij}$ is a finite nonnegative linear combination of gamma densities $h_a$ (with common rate $\beta$) whose shape parameters satisfy $a\ge D_{ij}\ge \alpha_*$. 
Hence, there exists $C_s>0$ such that $\Psi^s_{ij}(u)\le C_s u^{\alpha_*-1}$ for all $u\in(0,1)$.
Moreover, Lemma~\ref{lem:psi_b_boundedness} gives $\|\Psi^b_{ij}\|_\infty<\infty$.
Choose $\delta\in(0,1)$ so that $u^{\alpha_*-1}\ge1$ on $(0,\delta)$. Then for $u\in(0,\delta)$,
\[
\Psi_{ij}(u)\le C_su^{\alpha_*-1}+\|\Psi^b_{ij}\|_\infty
\le (C_s+\|\Psi^b_{ij}\|_\infty)\,u^{\alpha_*-1}.
\]
Taking the maximum over $(i,j)=(1,2),(2,1)$ yields \eqref{eq:Psi_max_bound}.

\medskip
\noindent\textbf{Proof of \eqref{eq:Psi_convolution_bound}.}
Fix $i\in\{1,2\}$ and set
\[
I_i(u):=\int_{\mathbb R}\Psi_{i1}(s)\Psi_{i2}(s+u)\,ds,\qquad u\in\mathbb R.
\]
Non-negativity implies $I_i(u)\ge0$. We will prove the upper bound.

Using \eqref{eq:psi_decompose_gamma} and $\int_{\mathbb{R}}f(s)g(s+u)\,ds\le \|f\|_1\|g\|_\infty$ for nonnegative $f,g$, we obtain
\begin{align*}
I_i(u)
&=\int_{\mathbb{R}}(\Psi^s_{i1}+\Psi^b_{i1})(s)\,(\Psi^s_{i2}+\Psi^b_{i2})(s+u)\,ds\\
&\le \int_{\mathbb{R}}\Psi^s_{i1}(s)\Psi^s_{i2}(s+u)\,ds
    + \|\Psi_{i1}\|_1\|\Psi^b_{i2}\|_\infty
    + \|\Psi^b_{i1}\|_\infty\|\Psi_{i2}\|_1,
\end{align*}
where we used $\Psi^s_{ik}\le \Psi_{ik}$ and $\Psi^b_{ik}\le \Psi_{ik}$.
Since $\|\Phi_{ij}\|_1=\alpha_{ij}$ and $\|(\Phi^{(*m)})_{ij}\|_1=(\boldsymbol{\alpha}^m)_{ij}$, the assumption
$\rho(\boldsymbol{\alpha})<1$ implies $\|\Psi_{ik}\|_1=\sum_{m\ge1}(\boldsymbol{\alpha}^m)_{ik}<\infty$.
By Lemma~\ref{lem:psi_b_boundedness}, $\|\Psi^b_{ik}\|_\infty<\infty$. Hence, the last two terms are finite constants
independent of $u$, and since $\alpha_*<1$ we may shrink $\delta\in(0,1)$ so that these constants are absorbed by
$C|u|^{\alpha_*-1}$ on $0<|u|<\delta$ (using $|u|^{\alpha_*-1}\ge1$ there).
Therefore, it suffices to show that
\begin{equation}\label{eq:ss_bound}
\int_{\mathbb R}\Psi^s_{i1}(s)\Psi^s_{i2}(s+u)\,ds \le C|u|^{\alpha_*-1},
\qquad 0<|u|<\delta.
\end{equation}

For $a,b>0$, define
\[
f^{\mathrm{BG}}_{a, b}(u):=\int_{\mathbb R} h_a(s)\,h_b(s+u)\,ds,\qquad u\in\mathbb R.
\]
Then, $f^{\mathrm{BG}}_{a, b}$ is the probability density function of a bilateral gamma distribution \cite{kuchler2008shapes} with parameters \\$(\alpha_+,\lambda_+,\alpha_-,\lambda_-)=(b,\beta,a,\beta)$.
By \citet[Theorem 6.1]{kuchler2008shapes}, as $u\to0$ the density satisfies
$f^{\mathrm{BG}}_{a, b}(u)=O(|u|^{a+b-1})$ if $a+b<1$, and $f^{\mathrm{BG}}_{a, b}(u)=O(M(|u|))$ if $a+b=1$, where $M$ is slowly varying at $0$. If $a+b>1$, then $f^{\mathrm{BG}}_{a, b}$ is bounded in a neighborhood of $0$.
Consequently, for any $a,b>0$ with $a+b>\alpha_*$, there exists $C_{a,b}>0$ and
$\delta_{a,b}\in(0,1)$ such that
\begin{equation}\label{eq:gab_alpha_star}
f^{\mathrm{BG}}_{a, b}(u)\le C_{a,b}|u|^{\alpha_*-1},\qquad 0<|u|<\delta_{a,b}
\end{equation}
since $\alpha^*<1$ by the assumption.
Next, by Lemma~\ref{lem:phi_m_convolution_path_expansion},
each $\Psi^s_{ik}$ is a finite nonnegative linear combination of gamma densities $h_a$. Hence, the left-hand side of
\eqref{eq:ss_bound} is a finite nonnegative linear combination of $f^{\mathrm{BG}}_{a, b}(u)$.
If $i=1$, then $\Psi^s_{12}$ only involves shapes $b\ge D_{12}\ge \alpha_*$, while shapes $a$ in $\Psi^s_{11}$ are strictly positive; thus $a+b>\alpha_*$.
If $i=2$, then $\Psi^s_{21}$ only involves shapes $a\ge D_{21}\ge \alpha_*$, while shapes $b$ in $\Psi^s_{22}$ are strictly
positive; thus again $a+b>\alpha_*$.
Therefore, \eqref{eq:gab_alpha_star} applies to all pairs $(a,b)$ appearing in the linear combination, we can
take a common $\delta>0$ and $C>0$, which yields \eqref{eq:ss_bound}. This proves \eqref{eq:Psi_convolution_bound}.
\end{proof}

\section{Implementation and computational complexity}\label{app:computation}
In this section, we discuss the efficient computation of the kernel density estimator $\hat g_h(u)$ and the search strategy for its maximizer $\hat\theta_h$.
In our implementation, the expected time complexity for computing $\hat\theta_h$ from observations on $[0,T]$ with bandwidth $h$ scales as
\[
\begin{cases}
O\!\left(T\log T + T^2 h\right), & \text{under Assumption~\ref{ass:cpcf}(i)},\\
O\!\left(T\log T + T^2 h^\alpha\right), & \text{under Assumption~\ref{ass:cpcf}(ii) with } 0<\alpha<1.
\end{cases}
\]
This is better than a naive $O(T^2)$ approach that evaluates $\hat g_h(u)$ at each candidate $u$ by summing over all pairs, especially when the bandwidth $h$ is small.

\subsection{Algorithm for computing $\hat g_h$ on a grid}
Directly evaluating $\hat g_h$ on a grid $\{u_1, \dots, u_M\}\subset\mathbb{R}$ may be computationally expensive, roughly scaling with the product of the grid size and the number of data pairs.
To reduce computational cost, we employ an algorithm that iterates over relevant timestamp pairs and distributes their kernel weights onto nearby grid points.
This approach is particularly efficient when the bandwidth $h$ is small relative to the grid's range.

Let $N_1$ and $N_2$ be the underlying point processes, observed over the window $[0,T]$, and let
\(
\mathcal{T}_i := \{t_{i,1}<\cdots<t_{i,n_i}\}\subset[0,T],\ n_i := N_i([0,T]),\ i=1,2
\)
denote the corresponding observed event times.
Let $\mathcal{U} = \{u_1<\cdots<u_M\}$ be a sorted grid where we wish to evaluate the estimator, and define
$u_{\min}:=u_1$ and $u_{\max}:=u_M$.
For any $a<b$, define the set of relevant pairs in the lag window $[a,b]$ and the corresponding set of differences by
\[
\mathcal{P}(a,b)
:=
\{(x,y)\in \mathcal{T}_1\times \mathcal{T}_2:\ y-x\in[a,b]\},
\qquad
N_{\mathrm{pairs}}(a,b):=|\mathcal{P}(a,b)|,
\]
\[
\mathcal{D}(a,b)
:=
\{y-x:\ (x,y)\in\mathcal{P}(a,b)\}\subset[a,b].
\]

Algorithm~\ref{alg:cpcf_grid} outlines the procedure.
Instead of fixing $u$ and summing over all pairs, we iterate through each observed time $x \in \mathcal{T}_1$.
Using binary search in $\mathcal{T}_2$, we identify the range of $y \in \mathcal{T}_2$ for which the difference
$d = y - x$ lies within the lag window $[u_{\min}-h,\,u_{\max}+h]$, i.e., the set of lags that can influence at least one grid point through a bandwidth-$h$ kernel.
For each such difference $d$, we find the subset of grid points in $\mathcal{U} \cap [d-h, d+h]$ (again via binary search) and accumulate the kernel contribution at those grid points.
Throughout this section, we assume that $K$ is supported on $[-1,1]$.

\begin{algorithm}[t]
\caption{Computation of $\hat g_h(u)$ on a grid}\label{alg:cpcf_grid}
\begin{algorithmic}
\Require Sorted event times $\mathcal{T}_1$, $\mathcal{T}_2$, sorted grid $\mathcal{U}=\{u_1, \dots, u_M\}$, bandwidth $h$, \\ kernel $K$ supported on $[-1, 1]$.
\Ensure Values $G=(G_1,\dots,G_M)$ corresponding to $(\hat g_h(u_1),\dots,\hat g_h(u_M))$.
\State Initialize $G \gets (0, \dots, 0)$
\State $u_{\min} \gets u_1,\quad u_{\max} \gets u_M$
\For{$x \in \mathcal{T}_1$}
    \State Identify indices $[j_{\text{start}}, j_{\text{end}}]$ for $\mathcal{T}_2 \cap [x + u_{\min} - h,\ x + u_{\max} + h]$
    \For{$j \gets j_{\text{start}}$ \textbf{to} $j_{\text{end}}$}
        \State $y \gets \mathcal{T}_2[j]$
	        \State $d \gets y - x$
	        \State Identify indices $[k_{\text{start}}, k_{\text{end}}]$ for $\mathcal{U} \cap [d - h,\ d + h]$
	        \For{$k \gets k_{\text{start}}$ \textbf{to} $k_{\text{end}}$}
	            \State $G_k \gets G_k + \frac{1}{h} K\!\left(\frac{d - u_k}{h}\right)$
	        \EndFor
	    \EndFor
\EndFor
\State Scale $G$ by $\frac{T}{n_1 n_2}$, i.e., $G \gets \frac{T}{n_1 n_2}G$
\State \Return $G$
\end{algorithmic}
\end{algorithm}

\paragraph{Computational complexity.}
Only pairs in the lag window $[u_{\min}-h,\ u_{\max}+h]$ contribute to $\hat g_h$ evaluated on $\mathcal{U}$, and the number of such pairs is
$N_{\mathrm{pairs}}(u_{\min}-h,\ u_{\max}+h)$.
For each $x\in\mathcal{T}_1$, we locate the index range of
$\mathcal{T}_2\cap[x+u_{\min}-h,\ x+u_{\max}+h]$ by binary search in $\mathcal{T}_2$, which costs $O(\log n_2)$ per $x$
(and thus $O(n_1\log n_2)$ in total).
For each relevant pair $(x,y)\in\mathcal{P}(u_{\min}-h,\ u_{\max}+h)$ with $d=y-x$, we perform a binary search on $\mathcal{U}$ to locate
$\mathcal{U}\cap[d-h,d+h]$, which costs $O(\log M)$, and then update all grid points in that subarray.
Let $M_h := \sup_{u\in\mathbb R}|\mathcal{U}\cap[u-h,u+h]|$ denote the maximum local grid occupancy at scale $h$.
Thus, the total complexity is bounded by
\[
O\!\left(n_1 \log n_2 + N_{\mathrm{pairs}}(u_{\min}-h,\ u_{\max}+h)\bigl(\log M + M_h\bigr)\right).
\]
This is significantly faster than the naive $O\!\left(M\cdot N_{\mathrm{pairs}}(u_{\min}-h,\ u_{\max}+h)\right)$ approach when $M_h \ll M$,
which typically occurs when the bandwidth $h$ is small.
As suggested by Theorem~\ref{thm:cpcf}, small bandwidths are desirable in practice, especially when $g$ exhibits a sharp peak,
where the proposed implementation yields a substantial computational gain.

\subsection{Finding the maximizer of $\hat g_h$}
To compute the estimator $\hat{\theta}_h$, we need to find the global maximizer of $\hat g_h(u)$ within the range $[-r, r]$.
The objective function $\hat g_h$ may have many local optima, making it difficult for standard numerical optimization to converge to the global maximum.
However, when $K$ is piecewise linear, $\hat g_h(u)$ is also piecewise linear.
Hence, a maximizer over $[-r,r]$ can be found by evaluating $\hat g_h$ only at the kink locations induced by the differences $d=y-x$,
$x\in\mathcal{T}_1$, $y\in\mathcal{T}_2$.

Let $\mathcal{D}_{r,h}:=\mathcal{D}(-r-h,\ r+h)$ denote the set of lag differences in the window $[-r-h,r+h]$.
In particular, for the triangular kernel $K^\tri(x)=(1-|x|)\mathbf{1}_{[-1, 1]}(x)$, the function may change slope only at
\(
u=d-h,\ u=d,\ u=d+h
\) for each lag difference $d\in \mathcal{D}_{r,h}$.
Therefore, to obtain the maximizer $\hat{\theta}_h$, it suffices to evaluate $\hat g_h$ on the candidate set
\[
\mathcal{U}_{\mathrm{kink}}
:=
\Bigl(\mathcal{D}_{r,h}\ \cup\ \bigl(\mathcal{D}_{r,h}+h\bigr)\ \cup\ \bigl(\mathcal{D}_{r,h}-h\bigr)
\cup\ \{-r,r\}
\Bigr )\cap[-r,r].
\]
In an implementation, it is recommended to remove duplicates in $\mathcal{U}_{\mathrm{kink}}$
before sorting; otherwise the grid may contain repeated points and incur unnecessary work.

\paragraph{Computational complexity.}
The set $\mathcal{D}_{r,h}$ can be constructed while enumerating the relevant pairs $\mathcal{P}(-r-h,r+h)$, which takes
$O\!\left(N_{\mathrm{pairs}}(-r-h,\ r+h)\right)$ time up to constant-factor overhead.
Building $\mathcal{U}_{\mathrm{kink}}$ from $\mathcal{D}_{r,h}$ is linear in $|\mathcal{U}_{\mathrm{kink}}|$, and we may sort
$\mathcal{U}_{\mathrm{kink}}$ once to apply Algorithm~\ref{alg:cpcf_grid}.
Evaluating $\hat g_h$ on $\mathcal{U}_{\mathrm{kink}}$ using Algorithm~\ref{alg:cpcf_grid} has the same form as above, with $M$ replaced by
$|\mathcal{U}_{\mathrm{kink}}|$ and with $M_h$ defined as above but computed on the grid $\mathcal{U}_{\mathrm{kink}}$:
\[
O\!\left(n_1 \log n_2 + N_{\mathrm{pairs}}(-r-h,\ r+h)\bigl(\log |\mathcal{U}_{\mathrm{kink}}| + M_h\bigr)\right).
\]
Finally, the maximizer is obtained by a single pass over the evaluated values, which costs $O(|\mathcal{U}_{\mathrm{kink}}|)$.

\paragraph{Scaling in $T$ and $h$.}
For the simple stationary bivariate point process $N=(N_1, N_2)$ with intensities $\lambda_1,\lambda_2$ and CPCF $g$, we have $\E[n_i]=\lambda_i T$ for $i=1,2$.
Moreover, by the definition of the cross-intensity function, we have
\[
\E\!\left[N_{\mathrm{pairs}}(a,b)\right]
=\int_{(0,T]^2}\mathbf{1}\{y-x\in[a,b]\}\,\lambda_{12}(y-x)\,dx\,dy
\approx \lambda_1\lambda_2\,T\int_a^b g(u)\,du.
\]
If $T\gg r+h$ and $[a,b]\subset[-r-h,r+h]$, then $\E[N_{\mathrm{pairs}}(a,b)] \asymp \lambda_1\lambda_2\,T\int_a^b g(u)\,du$.
In particular,
\[
\E\!\left[N_{\mathrm{pairs}}(-r-h,r+h)\right]=O(T),
\qquad
\E[n_1\log n_2]=O(T\log T),
\]
where the hidden constant depends on $g$ through its mass on $[-r-h, r+h]$.

Next, note that $|\mathcal{U}_{\mathrm{kink}}|\le 3|\mathcal{D}_{r,h}|+2\le 3N_{\mathrm{pairs}}(-r-h,r+h)+2$,
so $|\mathcal{U}_{\mathrm{kink}}|=O(T)$ in expectation, and the one-time sorting cost is $O(T\log T)$.
For the local update cost on the kink grid, observe that for any $u\in[-r,r]$,
\[
|\mathcal{U}_{\mathrm{kink}}\cap[u-h,u+h]|
\;\le\;
3\,|\mathcal{D}_{r,h}\cap[u-2h,u+2h]|+2
\;\le\;
3\,N_{\mathrm{pairs}}(u-2h,u+2h)+2.
\]
Taking expectations yields
\[
\E\!\left[|\mathcal{U}_{\mathrm{kink}}\cap[u-h,u+h]|\right]
= O\!\left(T\int_{u-2h}^{u+2h}g(v)\,dv\right).
\]
Taking the supremum over $u\in[-r,r]$ gives the worst-case bound
\[
M_h = O\!\left(T\sup_{u\in[-r,r]}\int_{u-2h}^{u+2h} g(v)\,dv\right).
\]
In particular, if $g$ is bounded (e.g., under \ref{ass:cpcf}(i)), then
$\int_{u-2h}^{u+2h} g(v)\,dv = O(h)$ uniformly in $u$, so $M_h=O(Th)$ and the expected time bound
reduces to $O(T\log T + T^2 h)$.
If $g$ is unbounded at $\theta^\ast$ as in \ref{ass:cpcf}(ii), then
$\int_{\theta^\ast-2h}^{\theta^\ast+2h} g(v)\,dv = O(h^\alpha)$. 
Thus, we may have $M_h=O(Th^\alpha)$, leading to an expected time bound of $O(T\log T + T^2 h^\alpha)$.
On the other hand, the naive implementation costs $O(N_{\mathrm{pairs}}\times |\mathcal{U}_{\mathrm{kink}}|) \approx O(T^2)$.

\section{Bandwidth selection by cross-validation}\label{sec:cv}
In this section, we introduce additional bandwidth-selection methods for the estimator $\hat\theta_h$ based on cross-validation and assess their performance in simulation studies.

In the context of modal regression, \citet{chen2016nonparametric} propose choosing the bandwidth by minimizing the size of prediction sets associated with the modal regression function, while \citet{zhou2019bandwidth} introduce a cross-validation criterion (CVM) that penalizes the squared distance between the responses and the estimated modal set and includes an explicit penalty for the number of modes.
Motivated by these developments, we design a cross-validation scheme that directly evaluates how well the maximizer set obtained from the training part of the point process predicts the empirical lag differences observed in the test part.  

Fix an integer $K_{\mathrm{cv}}\ge2$ and split the observation window $[0,T]$ into $K_{\mathrm{cv}}$ disjoint subintervals
$I_1,\dots,I_{K_{\mathrm{cv}}}$ of equal length. Here $K_{\mathrm{cv}}$ denotes the number of folds. For the $j$th fold, we regard $I_j$ as a test interval and
$[0,T]\setminus I_j$ as the corresponding training interval.
On the training interval we compute the kernel estimator $\hat g_h^{(-j)}$ based on
$N\cap([0,T]\setminus I_j)$ and its (possibly set-valued) maximizer set
\[
  M_h^{(-j)}
  := \argmax_{u\in [-r, r]} \hat g^{(-j)}_h(u),
  \qquad h\in \mathcal{H}_T,
\]
where $\mathcal{H}_T$ is the finite bandwidth grid.

For a set
$M\subset[-r, r]$ and $z\in\mathbb R$ we write
\[
  d(z,M):=\inf_{u\in M}|z-u|
\]
for the distance from $z$ to $M$.  Given a test interval $I_j$, we define the set of observed lag
differences restricted to $[-r, r]$ by
\[
  \Delta_j(r)
  := \bigl\{ y-x ;
              x\in \mathcal{T}_{1,j},\ y\in \mathcal{T}_{2,j},\ -r\le y-x\le r\bigr\},
\]
where $\mathcal{T}_{i,j}$ denotes the (finite) set of event times of $N_i$ that fall in $I_j$
for $i\in\{1,2\}$. We interpret $\Delta_j(r)$ as a multiset, i.e., lag differences are counted
with multiplicity, and write $n_j := |\Delta_j(r)|$ for the number of elements.
In practice, the fold length should be chosen large relative to $r$ so that each $I_j$ contains
enough pairs with lag in $[-r,r]$.
When $n_j\ge1$, we enumerate the elements of $\Delta_j(r)$ as $(d_{j,1},\dots,d_{j,n_j})$ (in an arbitrary order) and introduce the
distances
\[
  \delta_{j,\ell}(M) := d(d_{j,\ell},M), \qquad \ell=1,\dots,n_j,
\]
with order statistics
$\delta_{j,(1)}(M)\le\cdots\le\delta_{j,(n_j)}(M)$.
Fix a trimming parameter $\tau\in(0,1]$ and a minimum count $n_{\min}\in\mathbb N$, and set
\[
  k_j := \max\{\lceil \tau n_j\rceil,\ n_{\min}\},\qquad
  \varepsilon_j(M) :=
  \begin{cases}
    \delta_{j,(k_j)}(M), & n_j\ge k_j,\\[2pt]
    +\infty, & n_j<k_j.
  \end{cases}
\]
If $n_j<k_j$, we set $L_{\mathrm{nearest}}(M;I_j)=+\infty$, regardless of $M$, effectively discarding bandwidths for which the test fold contains too few lag differences.
Under this notation, we consider the following loss functions on $I_j$ for a finite candidate maximizer set $M\subset[-r, r]$ (so $|M|$ is its cardinality):
\begin{itemize}
  \item \textbf{MSE-type loss}:
        \[
          L_{\mathrm{mse}}(M;I_j)
          := \frac{|M|^2}{n_j} \sum_{\ell=1}^{n_j} \delta_{j,\ell}(M)^2,
        \]
        with the convention that folds with $n_j=0$ are excluded from the CV average.
        The factor $|M|^2$ penalizes bandwidths that produce many maximizers, playing a stabilizing role in the spirit of \citet{zhou2019bandwidth}.
	  \item \textbf{Nearest-range loss}:
	        \[
	          L_{\mathrm{nearest}}(M;I_j)
	          := \mathrm{Leb}\bigl(\{x\in\mathbb R : d(x,M)\le\varepsilon_j(M)\}\bigr).
	        \] The set
	        $\{x\in\mathbb R:d(x,M)\le\varepsilon\}$ is the $\varepsilon$-neighborhood of $M$, and
	        $\mathrm{Leb}(\cdot)$ is its total length. By construction, $\varepsilon_j(M)$ is chosen
	        so that at least $k_j=\max\{\lceil\tau n_j\rceil,n_{\min}\}$ of the test lag differences
	        lie within distance $\varepsilon_j(M)$ of $M$, so $L_{\mathrm{nearest}}(M;I_j)$ is the
	        length of the smallest neighborhood of $M$ covering that trimmed fraction. The minimum
	        count $n_{\min}$ improves numerical stability. This loss function is based on the
	        prediction set approach of \citet{chen2016nonparametric}.
\end{itemize}
Let $J:=\{j\in\{1,\dots,K_{\mathrm{cv}}\}: n_j\ge1\}$ denote the set of folds with at least one lag
difference in $[-r, r]$.
Given a choice of loss function $L\in\{L_{\mathrm{mse}},L_{\mathrm{nearest}}\}$, we define the
$K_{\mathrm{cv}}$-fold CV score for $h\in \mathcal{H}_T$ by
\[
  \mathrm{CV}(h)
  := \frac{1}{|J|} \sum_{j\in J} L\bigl(M_h^{(-j)}; I_j\bigr).
\]
Our cross-validated bandwidth is then chosen as
\[
  \hat h_{\mathrm{CV}} \in \arg\min_{h\in \mathcal{H}_T} \mathrm{CV}(h)
\]
If the minimizer is not unique, we select the smallest $h$ in $\mathcal{H}_T$.
We finally obtain the adaptive estimator $\hat{\theta}_{\hat{h}_{\mathrm{CV}}}$.

To compare the performance of different loss functions and their accompanying tuning parameters, we conduct a series of simulation experiments under the following common design. The candidate bandwidths are $h \in \{10^{-1},10^{-2},10^{-3},10^{-4},10^{-5},10^{-6}\}$, $r=1$, and the observation window is $[0,T]$. The observation horizon takes the values $T \in \{1000,2000,4000,8000\}$; for each combination of model, estimator, and $T$, we generate $5000$ Monte Carlo replicates.
The cross-validation criteria considered are the nearest-range loss $L_{\mathrm{nearest}}$ with trimming levels $\tau \in \{0.01,0.025,0.05\}$ and the MSE-type loss $L_{\mathrm{mse}}$. 
For comparison, we also include the Lepski selector with $A_T = \log\log T$. 
In the plots, the nearest-range CV curves for different $\tau$ are distinguished by a red color gradient; the MSE-based CV curves are shown in blue; and the curve of the Lepski estimator is shown in green. 
We set $K_{\mathrm{cv}}=5$ and $n_{\min}=5$. For reference, each panel also shows the theoretical rate $T^{-1/\beta_\alpha}$ as a black dashed line.

\begin{figure}[t]
    \centering
    \includegraphics[width=1\linewidth]{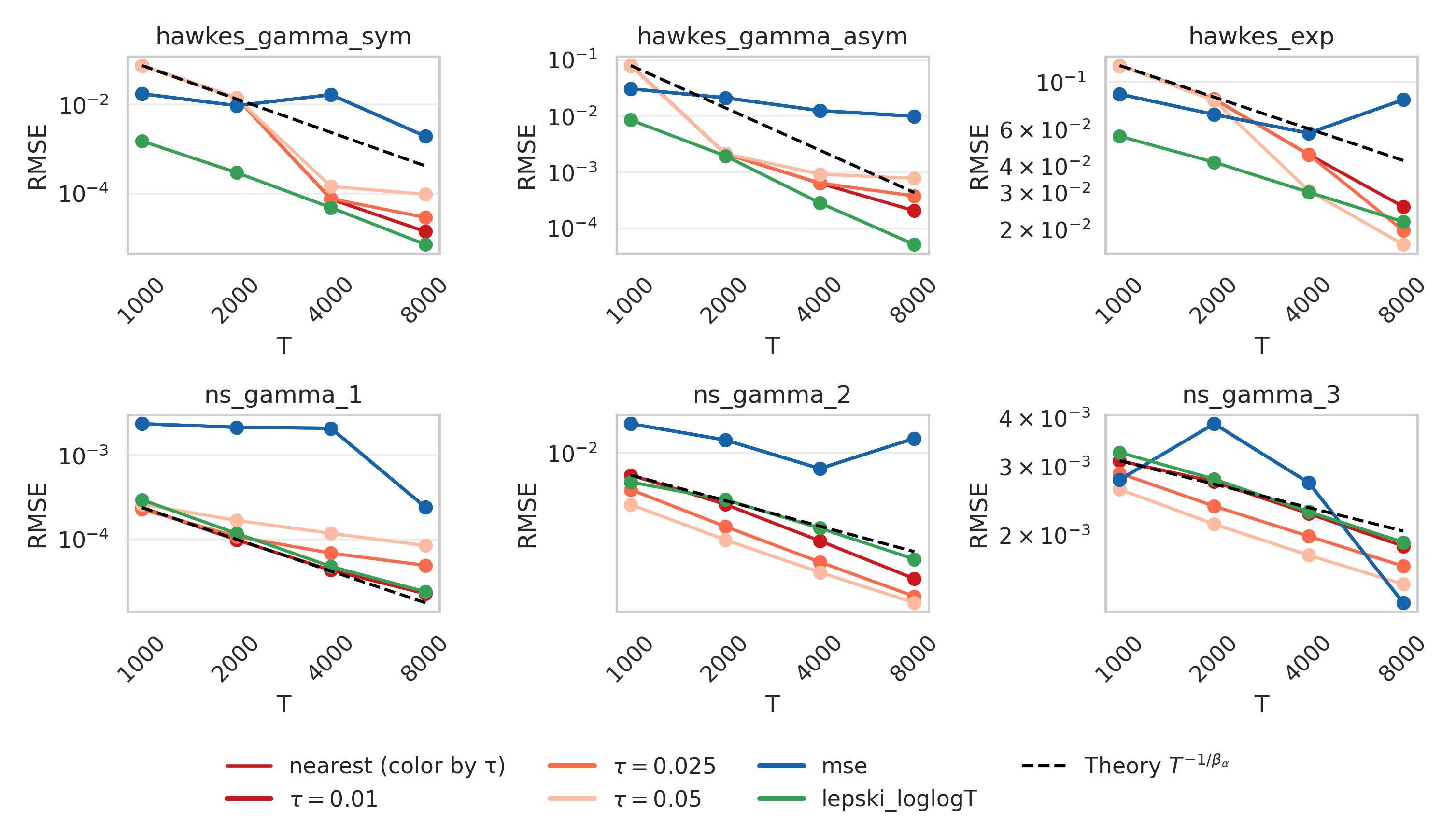}
    \caption{Performance comparisons between the bandwidth-selection methods across scenarios: RMSE versus $T$ on log--log axes for CV using $L_{\mathrm{nearest}}$ (red gradient by $\tau$), CV using $L_{\mathrm{mse}}$ (blue), and Lepski's method with $A_T=\log\log T$ (green); the dashed black line shows the theoretical rate $T^{-1/\beta_\alpha}$.}
    \label{fig:cv}
\end{figure}

Figure \ref{fig:cv} compares the finite-sample performance of the different bandwidth selection schemes across the six data-generating models described in Section \ref{sec:sim-models}. Several systematic patterns emerge.

First, the cross-validation criterion based on the MSE-type loss is somewhat unstable.
In five out of the six models, the RMSE of the MSE-CV estimator decreases only slowly, and sometimes not at all, as $T$ increases, compared to the theoretical slope. This suggests that the MSE-type loss is not well-suited to selecting the bandwidth for our cases.

The nearest-range loss $L_{\mathrm{nearest}}$ behaves more favorably, but its performance depends on the trimming parameter $\tau$. For models with sharper CPCFs, i.e., with $\alpha$ smaller than $1$, smaller values of $\tau$ tend to work better.
In contrast, for smoother models with $\alpha>1$, larger values of $\tau$ become competitive or even preferable.
In other words, the optimal choice of $\tau$ appears to be $\alpha$-dependent: aggressive trimming is beneficial when $g$ has a very sharp peak, whereas milder trimming is adequate when $g$ is flatter around its maximum.
A notable exception is the asymmetric Hawkes model (\texttt{hawkes\_gamma\_asym}). 
In this case, the nearest-range CV estimator improves more slowly and less regularly with $T$
across the $\tau$-values considered. One possible contributing factor is that
$L_{\mathrm{nearest}}$ is built from symmetric neighborhoods of the estimated maximizer set,
while the within-fold lag differences in this asymmetric setting may be skewed in finite samples;
in such cases, symmetric neighborhoods may be less informative for selecting $h$.

When compared with the Lepski-type procedure, the nearest-range CV estimator is inferior in the small-$\alpha$ models. Indeed, for \texttt{hawkes\_gamma\_sym}, \texttt{hawkes\_gamma\_asym}, and \texttt{ns\_gamma\_1}, the Lepski estimator shows lower RMSE. 
For smoother models with $\alpha>1$, however, the best-tuned nearest-range CV estimator becomes competitive. 
Overall, these experiments indicate that cross-validation based on $L_{\mathrm{nearest}}$ is a promising alternative, but it may require a delicate choice of $\tau$.

Taken together, our findings suggest the following practical recommendation. 
Among the bandwidth selectors we have examined, the Lepski method stands out as the most robust option. It requires only the slowly diverging threshold $A_T$, shows stable behavior across all models, and nearly attains the minimax rate $T^{-1/\beta_\alpha}$ both theoretically and in numerical experiments. 
Nearest-range cross-validation can be competitive, especially for smoother CPCFs, but it requires
choosing the trimming level $\tau$ in addition to the bandwidth, and its best choice is not yet
well understood theoretically. 
For this reason, we currently recommend the Lepski-type bandwidth choice as a default method for estimating the lead-lag time.

\paragraph{Acknowledgements}

We thank participants at the ``Big Data and Artificial Intelligence in Econometrics, Finance, and Statistics'' workshop at University of Chicago, October 2-4, 2025, the quantitative finance seminar at National University of Singapore, October 24, 2025, the KAKENHI symposium at Tsukuba University, October 30-31, 2025, Nakanoshima Workshop at Osaka University, December 5-6, 2025, and CFE-CMStatistics 2025 at University of London, December 13-15, 2025, for insightful comments and constructive suggestions on this work. 
Takaaki Shiotani's work was partly supported by Grant-in-Aid for JSPS Fellows (25KJ0933) and World-leading Innovative Graduate Study for Frontiers of Mathematical Sciences and Physics. 
Yuta Koike's work was partly supported by JST CREST Grant Number JPMJCR2115 and JSPS KAKENHI Grant Numbers JP22H00834, JP22H01139. 

\bibliography{ts}

\end{document}